\numberwithin{equation}{section}
\newtheorem{thm}{Theorem}[section]
\newtheorem{lem}[thm]{Lemma}
\newtheorem{cor}[thm]{Corollary}
\newtheorem{Prop}[thm]{Proposition}
\newtheorem{Rem}[thm]{Remark}
\newcommand{\F}{\mathbb{F}}
\newcommand{\N}{\mathbb{N}}
\newcommand{\R}{\mathbb{R}}
\newcommand{\cal}{\mathcal}
\begin{document}
\baselineskip=14pt

\title[Critical Choquard equation with potential well]{On the critical Choquard equation with potential well}

\author[F. Gao]{Fashun Gao}
\author[Z. Shen]{Zifei Shen}
\author[M. Yang]{Minbo Yang$^*$}

\address{Fashun Gao, \newline\indent Department of Mathematics, Zhejiang Normal University, \newline\indent
	Jinhua 321004, People's Republic of China}
\email{fsgao@zjnu.edu.cn}

\address{Zifei Shen,\newline\indent Department of Mathematics, Zhejiang Normal University, \newline\indent
	Jinhua 321004, People's Republic of China}
\email{szf@zjnu.edu.cn}

\address{Minbo Yang, \newline\indent Department of Mathematics, Zhejiang Normal University, \newline\indent
	Jinhua 321004, People's Republic of China}
\email{mbyang@zjnu.edu.cn}

\subjclass[2010]{35J20, 35J60, 35A15}
\keywords{Critical Choquard equation; Hardy-Littlewood-Sobolev inequality; Potential well; Lusternik-Schnirelmann category.}

\thanks{Zifei Shen and Fashun Gao were partially supported by NSFC ($11671364$);\\
$^*$ Minbo Yang is the corresponding author, he was partially supported by NSFC ($11571317$) and ZJNSF(LY$15A010010$).}

\begin{abstract}
	In this paper we are interested in the following nonlinear Choquard equation
$$
-\Delta u+(\lambda V(x)-\beta)u
=\big(|x|^{-\mu}\ast |u|^{2_{\mu}^{\ast}}\big)|u|^{2_{\mu}^{\ast}-2}u\hspace{4.14mm}\mbox{in}\hspace{1.14mm} \mathbb{R}^N,
$$
where $\lambda,\beta\in\mathbb{R}^+$, $0<\mu<N$, $N\geq4$, $2_{\mu}^{\ast}=(2N-\mu)/(N-2)$ is the upper critical exponent due to the Hardy-Littlewood-Sobolev inequality and the nonnegative potential function $V\in \mathcal{C}(\mathbb{R}^N,\mathbb{R})$ such that $\Omega :=\mbox{int} V^{-1}(0)$ is a nonempty bounded set with smooth boundary. If $\beta>0$ is a constant such that the operator $-\Delta +\lambda V(x)-\beta$ is non-degenerate, we prove the existence of ground state solutions which localize near the potential well int $V^{-1}(0)$ for $\lambda$ large enough and also characterize the asymptotic behavior of the solutions as the parameter $\lambda$ goes to infinity. Furthermore, for any $0<\beta<\beta_{1}$, we are able to find the existence of multiple solutions by the Lusternik-Schnirelmann category theory, where $\beta_{1}$ is the first eigenvalue of $-\Delta$ on $\Omega$ with Dirichlet boundary condition.
\end{abstract}

\maketitle

\begin{center}
	\begin{minipage}{8.5cm}
		\small
		\tableofcontents
	\end{minipage}
\end{center}
%

\section{Introduction and main results}
In this paper we are concerned with the existence of solutions of the Choquard type equation
\begin{equation}\label{Nonlocal.S1}
-\Delta u+ V(x)u=\big(|x|^{-\mu}\ast |u|^{q}\big)|u|^{q-2}u,\hspace{4.14mm} \mbox{in}\ \mathbb{R}^N,
\end{equation}
where $N\geq4$, $0<\mu<N$ and $V(x)$ is the external potential. This type of equation goes
back to the description of the quantum theory of a polaron at rest by S. Pekar in 1954 \cite{Ps}
and the modeling of an electron trapped
in its own hole in 1976 in the work of P. Choquard, as a certain approximation to Hartree-Fock theory of one-component
plasma \cite{L1}. In some particular cases, this equation is also known as the Schr\"{o}dinger-Newton equation, which was introduced by Penrose in his discussion on the selfgravitational collapse of a quantum mechanical wave function \cite{Pe}.

In last decades, a great deal of mathematical efforts have been devoted to the study of existence, multiplicity and properties of solutions of the nonlinear Choquard equation \eqref{Nonlocal.S1}. For constant potentials, if $N=3$, $q=2$ and $\mu=1$, the existence of ground states of equation \eqref{Nonlocal.S1} was obtained in \cite{L1, Ls} by variational methods. Involving the qualitative properties of the ground stats, the uniqueness was proved in \cite{L1} and the nondegeneracy was established in \cite{Len, WW}. For equation \eqref{Nonlocal.S1} with general $q$ and $\mu$, the regularity, positivity, radial symmetry and decay property of the ground states were proved in \cite{CCS1, MZ, MS1}. Moreover, the existence of positive ground states under the assumptions of Berestycki-Lions type in \cite{MS2}. For the existence of sign-changing solutions of the nonlinear Choquard equation, we refer the readers to the references \cite{CS, GMS, GS}.
 For nonconstant potentials, if $V$ is a continuous periodic function with $\inf_{\R^3} V(x)> 0$, noticing that the nonlocal term is invariant under translation, we can obtain easily the existence result by applying the Mountain Pass Theorem.
 If $V$ changes sign and $0$ lies in the gap of the spectrum of the Schr\"{o}dinger operator $-\Delta +V$, the problem is strongly indefinite, and the existence of solution for $q=2$ was considered in \cite{BJS} and  the existence of infinitely many geometrically distinct weak solutions in\cite{AC}. 
 
  If the nonlinear Choquard equation is equipped with deepening potential well of the form $\lambda a(x)+1$ where $a(x)$ is a  nonnegative continuous function such that $\Omega =$ int $(a^{-1}(0))$ is a non-empty bounded open set with smooth boundary. Moreover, suppose that $\Omega$ has $k$ connected components, more precisely,
\begin{equation} \label{a0}
\Omega =\bigcup_{j=1}^{k}\Omega_j
\end{equation}
with
\begin{equation} \label{a00}
\mbox{dist}( \Omega_i, \Omega_j)>0  \quad \mbox{for} \quad i \neq j,
\end{equation}
the existence and multiplicity of multi-bump shaped solution in \cite{ANY}.

We need to point out that all the existing results for the nonlinear Choquard equation \eqref{Nonlocal.S1} require that the exponent $q$ satisfies
$$
\frac{2N-\mu}{N}<q<\frac{2N-\mu}{N-2}.
$$
To understand why the range of $q$ make sense, it is necessary to recall the well-known Hardy-Littlewood-Sobolev inequality.
\begin{Prop}\label{HLS}
 (Hardy-Littlewood-Sobolev inequality). (See \cite{LL}.) Let $t,r>1$ and $0<\mu<N$ with $1/t+\mu/N+1/r=2$, $f\in L^{t}(\mathbb{R}^N)$ and $h\in L^{r}(\mathbb{R}^N)$. There exists a sharp constant $C(t,N,\mu,r)$, independent of $f,h$, such that
\begin{equation}\label{HLS1}
\int_{\mathbb{R}^{N}}\int_{\mathbb{R}^{N}}\frac{f(x)h(y)}{|x-y|^{\mu}}dxdy\leq C(t,N,\mu,r) |f|_{t}|h|_{r},
\end{equation}
where $|\cdot|_{q}$ for the $L^{q}(\mathbb{R}^{N})$-norm for $q\in[1,\infty]$. If $t=r=2N/(2N-\mu)$, then
$$
 C(t,N,\mu,r)=C(N,\mu)=\pi^{\frac{\mu}{2}}\frac{\Gamma(\frac{N}{2}-\frac{\mu}{2})}{\Gamma(N-\frac{\mu}{2})}\left\{\frac{\Gamma(\frac{N}{2})}{\Gamma(N)}\right\}^{-1+\frac{\mu}{N}}.
$$
In this case there is equality in \eqref{HLS1} if and only if $f\equiv(const.)h$ and
$$
h(x)=A(\gamma^{2}+|x-a|^{2})^{-(2N-\mu)/2}
$$
for some $A\in \mathbb{C}$, $0\neq\gamma\in\mathbb{R}$ and $a\in \mathbb{R}^{N}$.
\end{Prop}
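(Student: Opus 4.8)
The plan is to prove \eqref{HLS1} first with a finite (non-sharp) constant for the full admissible range of exponents, and then, in the conformal case $t=r=2N/(2N-\mu)$, to identify the sharp value $C(N,\mu)$ together with the extremal functions.

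\emph{Step 1 (a finite constant via weak-type Young).} Writing $K(x)=|x|^{-\mu}$, the double integral in \eqref{HLS1} equals $\int_{\R^N}(K\ast f)(y)\,h(y)\,dy$, so by H\"older's inequality it is at most $\|K\ast f\|_{r'}\,|h|_r$ with $1/r+1/r'=1$. It therefore suffices to show the convolution estimate $\|K\ast f\|_{r'}\leq C\,|f|_t$. The key point is that $K\in L^{N/\mu,\infty}(\R^N)$, i.e.\ $K$ lies in the weak Lebesgue space of exponent $N/\mu$, since its distribution function obeys $|\{x:|x|^{-\mu}>s\}|=c_N\,s^{-N/\mu}$. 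Using the balance condition $1/t+\mu/N+1/r=2$ one checks that $1+1/r'=1/t+\mu/N$, which is exactly the exponent arithmetic required for convolution with an element of $L^{N/\mu,\infty}$. The estimate then follows from the Marcinkiewicz interpolation theorem: splitting $K=K\chi_{\{K\le s\}}+K\chi_{\{K>s\}}$ into an $L^1$ and an $L^\infty$ piece, bounding the two resulting contributions to $(K\ast f)(y)$, and optimizing over the cutoff $s$ produces a weak-type bound which interpolates to the desired strong $L^{r'}$ estimate.

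\emph{Step 2 (reduction to radial decreasing functions).} For the sharp constant I would restrict to the conformal case $t=r=2N/(2N-\mu)$. By the Riesz rearrangement inequality, replacing $f$ and $h$ by their symmetric decreasing rearrangements $f^{*}$ and $h^{*}$ does not decrease the left-hand side of \eqref{HLS1}, while $|f^{*}|_t=|f|_t$ and $|h^{*}|_r=|h|_r$; hence it is enough to maximize the quotient over nonnegative radial decreasing functions. Since the kernel $|x-y|^{-\mu}$ is positive definite, a Cauchy--Schwarz argument on the bilinear form further reduces the problem to the diagonal case $f=h$, i.e.\ to a single radial profile.

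\emph{Step 3 (conformal invariance, the sharp constant, and the extremals).} In the conformal case the functional is invariant under the action of the conformal group of $\R^N$, realized through stereographic projection onto $S^N$. Following the competing-symmetries method, I would iterate the composition of the symmetric decreasing rearrangement with a fixed conformal map and show that the iterates converge to a profile which is simultaneously invariant under both operations. The only functions with this property are the conformal images of constants on the sphere, which pull back precisely to $(\gamma^2+|x-a|^2)^{-(2N-\mu)/2}$; this yields both the rigidity statement (equality holds only for these $f\equiv(\mathrm{const.})h$) and, after transporting the integral to $S^N$ where the kernel becomes constant, the explicit value of $C(N,\mu)$ in terms of Gamma functions. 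The interpolation in Step~1 is routine and the rearrangement reduction in Step~2 is standard, so the genuinely hard part is Step~3: pinning down the exact extremals and the optimal constant requires the conformal geometry on $S^N$, a convergence analysis for the competing-symmetries iteration, and a careful evaluation of the resulting spherical integral. For the final Gamma-function identity I would rely on the computation in \cite{LL} rather than reproduce it.
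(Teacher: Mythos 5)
The paper offers no proof of this proposition: it is quoted verbatim as a classical result with a pointer to \cite{LL}, so there is no internal argument to compare against. Your outline is, in substance, the standard proof found in that reference: the weak-type Young / Marcinkiewicz argument in Step 1 is exactly how the non-sharp inequality is obtained (and your exponent check $1+1/r'=1/t+\mu/N$ is correct, as is the membership $|x|^{-\mu}\in L^{N/\mu,\infty}$), and the Riesz rearrangement reduction plus the Carlen--Loss competing-symmetries argument on $S^N$ is the accepted route to the sharp constant and the extremals in the conformal case $t=r=2N/(2N-\mu)$. One point where your sketch is thinner than it reads: the competing-symmetries iteration produces the sharp constant and exhibits the functions $A(\gamma^2+|x-a|^2)^{-(2N-\mu)/2}$ as optimizers, but the full ``only if'' direction of the equality statement (that these are the \emph{only} cases of equality, and that $f$ must be a constant multiple of $h$) does not follow from the convergence of the iterates alone; it requires the strict form of the Riesz rearrangement inequality (equality forces the functions to be translates of symmetric decreasing profiles) combined with the Cauchy--Schwarz equality analysis for the positive definite kernel. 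If you fold that in, the argument is complete modulo the Gamma-function evaluation of the spherical integral, which you reasonably delegate to \cite{LL}.
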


By the Hardy-Littlewood-Sobolev inequality, for every $u\in H^{1}(\mathbb{R}^{N})$, the integral
$$
\int_{\mathbb{R}^{N}}\int_{\mathbb{R}^{N}}\frac{|u(x)|^{q}|u(y)|^{q}}{|x-y|^{\mu}}dxdy
$$
is well defined if
$$
\frac{2N-\mu}{N}\leq q\leq\frac{2N-\mu}{N-2}.
$$
Here $\frac{2N-\mu}{N}$ is the lower critical exponent and $2_{\mu}^{\ast}:=\frac{2N-\mu}{N-2}$ is the upper critical exponent due to the Hardy-Littlewood-Sobolev inequality.
The critical problem for the Choquard equation is an interesting topic and has attracted a lot of attention recently. The lower critical exponent case was studied in \cite{MS2}, some existence and nonexistence results were established if the potential $1-V$ should not decay to zero at infinity faster than the inverse square of $|x|$.
In order to study the critical nonlocal equation with upper critical exponent $2_{\mu}^{\ast}$, we use $S_{H,L}$ to denote the best constant defined by
\begin{equation}\label{S1}
S_{H,L}:=\displaystyle\inf\limits_{u\in D^{1,2}(\mathbb{R}^N)\backslash\{{0}\}}\ \ \frac{\displaystyle\int_{\mathbb{R}^N}|\nabla u|^{2}dx}{\Big(\displaystyle\int_{\mathbb{R}^N}\int_{\mathbb{R}^N}
\frac{|u(x)|^{2_{\mu}^{\ast}}|u(y)|^{2_{\mu}^{\ast}}}{|x-y|^{\mu}}dxdy\Big)^{\frac{N-2}{2N-\mu}}}.
\end{equation}
A critical Choquard type equation on a bounded domain of $\R^N$, $N\geq 3$  was investigated in \cite{GY, GY2}, there the authors generalized the well-known results obtained in \cite{ABC, BN}. In \cite{GY} it was observed that
\begin{Prop}\label{ExFu} (See \cite{GY}.)
The constant $S_{H,L}$ defined in \eqref{S1} is achieved if and only if $$u=C\left(\frac{b}{b^{2}+|x-a|^{2}}\right)^{\frac{N-2}{2}} ,$$ where $C>0$ is a fixed constant, $a\in \mathbb{R}^{N}$ and $b\in(0,\infty)$ are parameters. What's more,
$$
S_{H,L}=\frac{S}{C(N,\mu)^{\frac{N-2}{2N-\mu}}},
$$
where $S$ is the best Sobolev constant.
\end{Prop}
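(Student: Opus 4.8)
The plan is to sandwich $S_{H,L}$ between two copies of the Sobolev quotient, using the sharp Hardy--Littlewood--Sobolev inequality of Proposition~\ref{HLS} to collapse the nonlocal denominator onto a purely local one, and then to read off both the value and the extremals from the classical Aubin--Talenti description of the Sobolev minimizers together with the rigidity (equality) case recorded in Proposition~\ref{HLS}. Throughout, $2^\ast:=\tfrac{2N}{N-2}$ denotes the Sobolev critical exponent and $S$ the associated best Sobolev constant, i.e. the infimum of $\int_{\R^N}|\nabla u|^2\,dx$ over $u\in D^{1,2}(\R^N)\setminus\{0\}$ with $\big(\int_{\R^N}|u|^{2^\ast}dx\big)^{2/2^\ast}=1$.

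First I would establish the lower bound. Applying \eqref{HLS1} with $f=h=|u|^{2_\mu^\ast}$ and conjugate exponents $t=r=\tfrac{2N}{2N-\mu}$, the key exponent identity $2_\mu^\ast\cdot t=\tfrac{2N-\mu}{N-2}\cdot\tfrac{2N}{2N-\mu}=\tfrac{2N}{N-2}=2^\ast$ gives
\[
\int_{\R^N}\!\!\int_{\R^N}\frac{|u(x)|^{2_\mu^\ast}|u(y)|^{2_\mu^\ast}}{|x-y|^\mu}\,dx\,dy
\;\le\; C(N,\mu)\Big(\int_{\R^N}|u|^{2^\ast}\,dx\Big)^{\frac{2N-\mu}{N}}.
\]
Raising this to the power $\tfrac{N-2}{2N-\mu}$ and using $\tfrac{2N-\mu}{N}\cdot\tfrac{N-2}{2N-\mu}=\tfrac{N-2}{N}=\tfrac{2}{2^\ast}$ turns the HLS denominator into $C(N,\mu)^{\frac{N-2}{2N-\mu}}\big(\int_{\R^N}|u|^{2^\ast}dx\big)^{2/2^\ast}$. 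Dividing $\int_{\R^N}|\nabla u|^2\,dx$ by this and taking the infimum over $u\in D^{1,2}(\R^N)\setminus\{0\}$ yields $S_{H,L}\ge S\,C(N,\mu)^{-\frac{N-2}{2N-\mu}}$.

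Next I would supply the matching upper bound and exhibit the extremals by inserting the Aubin--Talenti bubble $U(x)=C\big(b/(b^2+|x-a|^2)\big)^{(N-2)/2}$, which realizes $S$, into the quotient \eqref{S1}. The crucial observation is the second exponent identity
\[
|U(x)|^{2_\mu^\ast}=(\mathrm{const})\,(b^2+|x-a|^2)^{-\frac{(N-2)2_\mu^\ast}{2}},
\qquad \frac{(N-2)2_\mu^\ast}{2}=\frac{2N-\mu}{2},
\]
so that $|U|^{2_\mu^\ast}$ is \emph{exactly} of the profile $A(\gamma^2+|x-a|^2)^{-(2N-\mu)/2}$ for which Proposition~\ref{HLS} asserts equality in \eqref{HLS1} (taking $f=h=|U|^{2_\mu^\ast}$, $\gamma=b$). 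Hence for $U$ the HLS step above is an identity, and since $U$ also saturates the Sobolev quotient the whole chain collapses, giving $S_{H,L}\le S\,C(N,\mu)^{-\frac{N-2}{2N-\mu}}$. Together with the lower bound this proves the stated value $S_{H,L}=S\,C(N,\mu)^{-\frac{N-2}{2N-\mu}}$ and that every $U$ of the displayed form is a minimizer.

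Finally, for the characterization I would run the sandwich in reverse. If $u$ attains $S_{H,L}$, then both inequalities in
\[
\frac{\int_{\R^N}|\nabla u|^2\,dx}{\big(\int\!\int \cdots\big)^{\frac{N-2}{2N-\mu}}}
\;\ge\;\frac{1}{C(N,\mu)^{\frac{N-2}{2N-\mu}}}\cdot\frac{\int_{\R^N}|\nabla u|^2\,dx}{\big(\int_{\R^N}|u|^{2^\ast}dx\big)^{2/2^\ast}}
\;\ge\;\frac{S}{C(N,\mu)^{\frac{N-2}{2N-\mu}}}
\]
must be equalities; the second forces $u$ to realize $S$, whence by the classical Aubin--Talenti rigidity $u$ is, up to the scalar (and sign) absorbed in $C$, precisely of the claimed form. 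It is convenient to first replace $u$ by $|u|$, which leaves the nonlocal term unchanged and does not increase $\int_{\R^N}|\nabla u|^2\,dx$, reducing to nonnegative competitors. The main obstacle is exactly this rigidity step: one must invoke the sharp equality analysis of the Sobolev (equivalently, HLS) inequality to pin the minimizers down completely, the consistency between the two being guaranteed by the exponent identity $(N-2)2_\mu^\ast/2=(2N-\mu)/2$ that makes the Aubin--Talenti profile coincide with the HLS equality profile.
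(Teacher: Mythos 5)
Your argument is correct and is essentially the same as the one in the cited reference \cite{GY} (the paper itself only quotes this proposition without proof): a lower bound via the sharp Hardy--Littlewood--Sobolev inequality applied to $f=h=|u|^{2_{\mu}^{\ast}}$ with $t=r=\tfrac{2N}{2N-\mu}$, a matching upper bound from the Aubin--Talenti bubble for which $|U|^{2_{\mu}^{\ast}}$ is exactly the HLS equality profile, and the characterization of minimizers read off from the equality cases. The exponent bookkeeping ($2_{\mu}^{\ast}t=2^{\ast}$, $\tfrac{2N-\mu}{N}\cdot\tfrac{N-2}{2N-\mu}=\tfrac{2}{2^{\ast}}$, $\tfrac{(N-2)2_{\mu}^{\ast}}{2}=\tfrac{2N-\mu}{2}$) all checks out, so nothing further is needed.
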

Let $U(x):=\frac{[N(N-2)]^{\frac{N-2}{4}}}{(1+|x|^{2})^{\frac{N-2}{2}}}$ be a minimizer for $S$, see \cite{Wi} for example, then
\begin{equation}\label{REL}
\aligned
\tilde{U}(x)=S^{\frac{(N-\mu)(2-N)}{4(N-\mu+2)}}C(N,\mu)^{\frac{2-N}{2(N-\mu+2)}}U(x)
\endaligned
\end{equation}
is the unique  minimizer for $S_{H,L}$ that satisfies
$$
-\Delta u=\Big(\int_{\R^N}\frac{|u(y)|^{2_{\mu}^{\ast}}}{|x-y|^{\mu}}dy\Big)|u|^{2_{\mu}^{\ast}-2}u\ \ \   \hbox{in}\ \ \ \R^N
$$
and
$$
\int_{\mathbb{R}^N}|\nabla \tilde{U}|^{2}dx=\int_{\mathbb{R}^N}\int_{\mathbb{R}^N}\frac{|\tilde{U}(x)|^{2_{\mu}^{\ast}}|\tilde{U}(y)|^{2_{\mu}^{\ast}}}{|x-y|^{\mu}}dxdy=S_{H,L}^{\frac{2N-\mu}{N-\mu+2}}.
$$
Moreover, for every open subset $\Omega$ of $\mathbb{R}^N$,
\begin{equation}
S_{H,L}(\Omega):=\displaystyle\inf\limits_{u\in D_{0}^{1,2}(\Omega)\backslash\{{0}\}}\ \ \frac{\displaystyle\int_{\Omega}|\nabla u|^{2}dx}{\left(\displaystyle\int_{\Omega}\int_{\Omega}\frac{|u(x)|^{2_{\mu}^{\ast}}|u(y)|^{2_{\mu}^{\ast}}}{|x-y|^{\mu}}dxdy\right)^{\frac{N-2}{2N-\mu}}}=S_{H,L},
\end{equation}
$S_{H,L}(\Omega)$ is never achieved except when $\Omega=\R^N$. That means, for bounded domain $\Omega$ there are no nontrivial solutions for $$
-\Delta u=\Big(\int_{\Omega}\frac{|u(y)|^{2_{\mu}^{\ast}}}{|x-y|^{\mu}}dy\Big)|u|^{2_{\mu}^{\ast}-2}u\ \ \   \hbox{in}\ \ \ \Omega.
$$
On the other hand, similar to the observation made in \cite{BC}, if $V(x)=\lambda$ is a constant and $q=\frac{2N-\mu}{N-2}$ in \eqref{Nonlocal.S1} while $u$ is a classical solution, then we can establish the following Poho\u{z}aev identity
$$
\frac{N-2}{2}\int_{\R^N} |\nabla u|^{2}dx+\frac{\lambda N}{2}\int_{\R^N} |u|^{2}dx=\frac{N-2}{2}\int_{\R^N}
\int_{\R^N}\frac{|u(x)|^{2_{\mu}^{\ast}}|u(y)|^{2_{\mu}^{\ast}}}{|x-y|^{\mu}}dxdy,
$$
thus we can obtain
$$
\lambda\int_{\R^N} |u|^{2}dx=0,
$$
which means that there are no nontrivial solutions with $\lambda\neq 0$. Hence it is quite interesting to know how the behavior of the potential function or the perturbation of the critical term will affect the existence of solutions for critical Choquard equation.

If the critical part was perturbed by a subcritical term, the existence of ground states was investigated in \cite{AGSY} there the authors also studied the semiclassical limit problem for the singularly perturbed Choquard equation in $\R^3$ and characterized
the concentration behavior by variational methods. For the problem with sign-changing potential, a strongly indefinite Choquard equation with critical exponent was studied in \cite{GY3} via generalized linking theorem.
Recently the case of critical growth in the sense of Trudinger-Moser inequality in $\R^2$ was also considered in \cite{ACTY}, there the authors studied the existence and concentration of the ground states.

The aim of the present paper is to consider the nonlinear Choquard equation with potential well, that is
\begin{equation}\label{CCE}
\left\{\begin{array}{l}
\displaystyle-\Delta u+(\lambda V(x)-\beta)u
=\big(|x|^{-\mu}\ast |u|^{2_{\mu}^{\ast}}\big)|u|^{2_{\mu}^{\ast}-2}u\hspace{4.14mm}\mbox{in}\hspace{1.14mm} \mathbb{R}^N,\\
\displaystyle u\in H^{1}(\mathbb{R}^N),\hspace{10.6mm}
\end{array}
\right.
\end{equation}
where $\lambda,\beta\in\mathbb{R}^+$, $0<\mu<N$, $N\geq4$ and the potential $V$ satisfies the assumptions:

 $(V_1)$ $V\in \mathcal{C}(\mathbb{R}^N,\mathbb{R})$, $V\geq0$, and $\Omega :=$ int $V^{-1}(0)$ is a nonempty bounded set with smooth boundary, 0 is in interior of $\Omega$ and $\overline{\Omega}=V^{-1}(0)$.

$(V_2)$ There exists $M_{0}>0$ such that
$$
\mathcal{L}\{x\in\mathbb{R}^N:V(x)\leq M_{0}\}<\infty,
$$
where $\mathcal{L}$ denotes the Lebesgue measure in $\mathbb{R}^N$.

As we all know, the local nonlinear Schr\"{o}dinger equation with deepening potential well has also been widely investigated. Consider
\begin{equation}\label{local.S1}
-\Delta u+(\lambda V(x)-\beta)u=|u|^{p-2}u,\hspace{4.14mm} \mbox{in}\ \mathbb{R}^N,
\end{equation}
where the potential $V(x)$ satisfies $(V_1)$ and $(V_2)$. In \cite{BW}, the authors studied the subcritical case and proved the existence of a least energy solution of \eqref{local.S1} for large $\lambda$. They also showed that the sequence of least energy solutions converges strongly to a least energy solution for a problem in bounded domain. Furthermore, they also obtained the existence of at least $cat(\Omega)$ positive solutions for large $\lambda$, where $\Omega =\mbox{int} (V^{-1}(0))$ and $cat(\Omega)$ stands for the category of the domain  $\Omega$.

The critical case was considered
in \cite{CD1}, there the authors proved the existence and multiplicity of positive solutions which localize near the potential well for $\beta$ small and $\lambda$ large. Later, they also proved the existence of solutions which change sign exactly once in \cite{CD2}. We also refer to \cite{BW3} where the authors proved the existence of $k$ solutions that may change sign for any $k$ and $\lambda$ large enough. Suppose that the potential $V(x)$ satisfies \eqref{a0}, \eqref{a00} and the nonlinearity is of subcritical growth, the authors in \cite{DT} overcame the loss of compactness and  applied the deformation flow arguments to build the multi-bump shaped solutions. Recently the existence of multi-bump shaped solutions for \eqref{local.S1} with critical growth was also studied in \cite{GT, GT2, Tz}, the main results there generalize and complement the theorems in \cite{DT}. We would also like to mention some related nonlocal problems in \cite{JZ} and the references therein, there the existence of solutions of the nonlocal Schr\"odinger-Poisson system was investigated under the effect of critical growth assumption or potential well type function $V(x)$. It is then quite natural to ask how the appearance of the potential well will affect the existence of solutions of the critical Choquard equation  \eqref{CCE} and what is the asymptotic behavior of the solutions as the parameter $\lambda$ goes to infinity, does the same results established for local Schr\"odinger equation still hold for the critical Choquard equation?

To study equation \eqref{CCE} by variational methods, we introduce the energy functional defined by
$$
J_{\lambda,\beta}(u)=\frac{1}{2}\int_{\mathbb{R}^N}(|\nabla u|^{2}+(\lambda V(x)-\beta)|u|^{2})dx-\frac{1}{2\cdot2_{\mu}^{\ast}}\int_{\mathbb{R}^N}\int_{\mathbb{R}^N}
\frac{|u(x)|^{2_{\mu}^{\ast}}|u(y)|^{2_{\mu}^{\ast}}}{|x-y|^{\mu}}dxdy.
$$
The Hardy-Littlewood-Sobolev inequality implies that $J_{\lambda,\beta}$ is well defined on $H^{1}(\mathbb{R}^N)$ and belongs to $\mathcal{C}^{1}$. Then we see that $u$ is a weak solution of \eqref{CCE} if and only if $u$ is a critical point of the functional $J_{\lambda,\beta}$. Furthermore, a function $u_{0}$ is called a ground state of \eqref{CCE} if $u_{0}$ is a critical point of \eqref{CCE} and $J_{\lambda,\beta}(u_{0})\leq J_{\lambda,\beta}(u)$ holds for any critical point $u$ of \eqref{CCE}, i.e.
$$
J_{\lambda,\beta}(u_{0})=c:=\inf \Big\{J_{\lambda,\beta}(u):u\in H^{1}(\mathbb{R}^N)\backslash\{{0}\} \mbox{ is a critical point of \eqref{CCE}} \Big\}.
$$

In the following we will denote the sequence of eigenvalues of the operator $-\Delta$ on $\Omega$ with homogeneous Dirichlet boundary data by
$$
0<\beta_{1}<\beta_{2}\leq...\leq \beta_{j}\leq\beta_{j+1}\leq...
$$
and
$
\beta_{j}\rightarrow+\infty
$
as $j\rightarrow+\infty$. Notice that wether the parameter $\beta$ lies in $(0, \beta_{1})$ or not affect the functional  $J_{\lambda,\beta}$ greatly. If $0<\beta<\beta_{1}$, the operator $-\Delta+\lambda V(x)-\beta$ is positively definite in $H^{1}(\mathbb{R}^N)$. However, if $\beta>\beta_{1}$, the operator $-\Delta+\lambda V(x)-\beta$ might be indefinite in $H^{1}(\mathbb{R}^N)$. Moreover, the appearance of convolution type nonlinearities brings us a lot of difficulties and the techniques in \cite{BW, GT2, Tz} can not be applied to the Choquard equation directly.  Thus, to look for solutions for equation \eqref{CCE}, we need to develop new techniques to overcome the difficulties.

The first result is to establish the existence of ground state solutions and the asymptotic behavior of the solutions for \eqref{CCE} with $\beta\in (0,\beta_{1})$. The result reads as
\begin{thm}\label{EXS}
Suppose that assumptions $(V_1)$ and $(V_2)$ hold, $0<\mu<N$, $N\geq4$. Then, for any $\beta\in (0,\beta_{1})$ there exists $\lambda_{\beta}>0$ such that, for each $\lambda\geq\lambda_{\beta}$, equation \eqref{CCE} has at least one ground state solution $u$, where $\beta_{1}$ is the first eigenvalue of $-\Delta$ on $\Omega$ with boundary condition $u=0$. Furthermore, for any sequences $\lambda_{n}\rightarrow\infty$, then every sequence of solutions $\{u_{n}\}$ of \eqref{CCE} satisfying $J_{\lambda,\beta}(u_{n})\rightarrow c<\frac{N+2-\mu}{4N-2\mu}S_{H,L}^{\frac{2N-\mu}{N+2-\mu}}$ as $n\rightarrow\infty$, converges to a solution of
\begin{equation}\label{LCCE}
\left\{\begin{array}{l}
\displaystyle-\Delta u
=\beta u+\big(|x|^{-\mu}\ast |u|^{2_{\mu}^{\ast}}\big)|u|^{2_{\mu}^{\ast}-2}u\hspace{6.14mm} \mbox{in}\hspace{1.14mm} \Omega,\\
\displaystyle u\in H_{0}^{1}(\Omega),
\end{array}
\right.
\end{equation}
$\Omega$ is defined as in $(V_{1})$.
\end{thm}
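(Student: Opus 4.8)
The plan is to produce a ground state as a mountain-pass critical point, controlling the loss of compactness by a sharp energy threshold, and then to extract the limit problem by a concentration argument driven by $\lambda_n\to\infty$. First I would fix the variational framework. For $\beta\in(0,\beta_1)$ and $\lambda$ large I would work in
$$E_\lambda=\Big\{u\in H^1(\mathbb{R}^N):\int_{\mathbb{R}^N}\lambda V(x)|u|^2\,dx<\infty\Big\}$$
with the inner product induced by $\int_{\mathbb{R}^N}(|\nabla u|^2+\lambda V(x)|u|^2)\,dx$. Under $(V_1)$--$(V_2)$ the embedding $E_\lambda\hookrightarrow L^p(\mathbb{R}^N)$ is continuous for $2\le p\le 2^\ast$ and compact for $2\le p<2^\ast$, uniformly for $\lambda\ge1$. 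Because $\beta<\beta_1$, the first Dirichlet eigenvalue of $-\Delta$ on $\Omega$, a concentration argument shows there is $\lambda_\beta>0$ such that for all $\lambda\ge\lambda_\beta$ the quadratic form $Q_\lambda(u)=\int_{\mathbb{R}^N}(|\nabla u|^2+(\lambda V(x)-\beta)|u|^2)\,dx$ is positive definite and equivalent to the $E_\lambda$-norm: mass on $\Omega$ pays at least $\beta_1\int|u|^2$, while outside $\Omega$ the term $\lambda V$ dominates $\beta$ once $\lambda$ is large. Then $J_{\lambda,\beta}\in\mathcal{C}^1(E_\lambda)$ has a strict local minimum at the origin and $J_{\lambda,\beta}(tu)\to-\infty$ as $t\to\infty$ for each $u\ne0$, so the mountain-pass geometry holds; I would let $c_{\lambda,\beta}$ be the mountain-pass level and note it coincides with the least energy $\inf_{\mathcal N_{\lambda,\beta}}J_{\lambda,\beta}$ over the Nehari manifold $\mathcal N_{\lambda,\beta}=\{u\ne0:\langle J_{\lambda,\beta}'(u),u\rangle=0\}$.

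The decisive step is compactness. A direct computation identifies the threshold in the statement,
$$c^\ast:=\frac{N+2-\mu}{4N-2\mu}\,S_{H,L}^{\frac{2N-\mu}{N+2-\mu}}=\Big(\tfrac12-\tfrac1{2\cdot2_\mu^\ast}\Big)S_{H,L}^{\frac{2N-\mu}{N+2-\mu}},$$
as the energy carried by a single Aubin--Talenti bubble $\tilde U$ of Proposition~\ref{ExFu}. I would show that $J_{\lambda,\beta}$ satisfies the Palais--Smale condition at every level $c<c^\ast$: boundedness of a $(PS)_c$ sequence follows from the Nehari identity, the uniform compact embedding $E_\lambda\hookrightarrow L^2$ (via $(V_2)$) handles the lower-order term, and a nonlocal Brezis--Lieb lemma for the Choquard nonlinearity together with the Hardy--Littlewood--Sobolev inequality shows that the compactness defect is a sum of bubbles, each of energy exactly $c^\ast$; since $c<c^\ast$ no bubble can split off and the sequence converges strongly.

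To finish existence I would verify the strict inequality $c_{\lambda,\beta}<c^\ast$ by a test-function estimate, inserting suitably rescaled and truncated minimizers $\tilde U_\varepsilon$ supported in $\Omega$ into the minimax. Exactly as in the Brezis--Nirenberg analysis, the negative linear contribution $-\beta\int|\tilde U_\varepsilon|^2$ dominates the truncation error for small $\varepsilon$, strictly lowering the level below the free-bubble value $c^\ast$; this is precisely where $\beta>0$ and $N\ge4$ are needed. Combined with $(PS)_{c_{\lambda,\beta}}$, the mountain-pass theorem gives a nontrivial critical point $u_\lambda$ at level $c_{\lambda,\beta}$, which is a ground state because $c_{\lambda,\beta}$ is the least Nehari energy.

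For the asymptotic statement, take $\lambda_n\to\infty$ and critical points $u_n$ with $J_{\lambda_n,\beta}(u_n)\to c<c^\ast$. The energy bound and Nehari identity give a uniform $H^1(\mathbb{R}^N)$-bound and a uniform bound on $\int\lambda_n V|u_n|^2$; passing to a subsequence, $u_n\rightharpoonup u$ in $H^1(\mathbb{R}^N)$, and since $\int V|u_n|^2\le C/\lambda_n\to0$, Fatou's lemma forces $\int V|u|^2=0$, so by $(V_1)$ the limit vanishes on $\{V>0\}$ and $u\in H^1_0(\Omega)$. The sub-threshold condition $c<c^\ast$ again rules out concentration, and $(V_2)$ rules out escape of mass to infinity, so $u_n\to u$ strongly in $H^1(\mathbb{R}^N)$; passing to the limit in the weak formulation of \eqref{CCE} all integrals localize to $\Omega$ and $u$ solves \eqref{LCCE}. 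Throughout, the main obstacle is the nonlocal loss of compactness: establishing the nonlocal Brezis--Lieb splitting and the sharp strict inequality $c_{\lambda,\beta}<c^\ast$ are the technical heart of the argument.
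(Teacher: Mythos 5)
Your proposal is correct and follows essentially the same route as the paper: positivity of $-\Delta+\lambda V-\beta$ for $\lambda$ large (Lemma \ref{PR2}), the Palais--Smale condition below $\frac{N+2-\mu}{4N-2\mu}S_{H,L}^{\frac{2N-\mu}{N+2-\mu}}$ via the nonlocal Br\'ezis--Lieb splitting and the Hardy--Littlewood--Sobolev inequality (Proposition \ref{PS}), the strict level estimate with truncated bubbles supported in $\Omega$, and the same sub-threshold argument for the asymptotics as $\lambda_n\to\infty$. The only cosmetic difference is that you carry out the Br\'ezis--Nirenberg test-function computation directly, whereas the paper quotes it from \cite{GY} via Lemma \ref{MPE3} and the comparison $c_{\lambda,\beta}\leq c(\beta,\Omega)$; the underlying estimate is identical.
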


Next we will use the Lusternik-Schnirelmann category (see e.g. \cite{Wi}) to characterize the multiplicity result.
\begin{thm}\label{EXS3}
Assume $(V_1)$ and $(V_2)$ hold, $0<\mu<N$ and $N\geq4$. Then, there exist $0<\beta^{*}<\beta_{1}$ and for each $0<\beta\leq\beta^{*}$ two numbers $\lambda_{\beta}>0$ and $0<c_{\beta}<\frac{N+2-\mu}{4N-2\mu}S_{H,L}^{\frac{2N-\mu}{N+2-\mu}}$ such that, if $\lambda\geq\lambda_{\beta}$, then \eqref{CCE} has at least $cat(\Omega)$ solutions with energy $J_{\lambda,\beta}\leq c_{\beta}$,  where $cat(\Omega)$ is the category of the domain  $\Omega$.
\end{thm}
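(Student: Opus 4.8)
The plan is to adapt the Lusternik--Schnirelmann scheme of Benci--Cerami type to the present nonlocal critical setting. Throughout fix $\bt\in(0,\bt^*)$ with $\bt^*<\bt_1$ to be chosen small, abbreviate the compactness threshold as $c_*:=\frac{N+2-\mu}{4N-2\mu}S_{H,L}^{\frac{2N-\mu}{N+2-\mu}}$, and work on the Nehari manifold
$$
\cn_{\lm,\bt}=\big\{u\in H^1(\R^N)\setminus\{0\}:\langle J'_{\lm,\bt}(u),u\rangle=0\big\}.
$$
Since $0<\bt<\bt_1$ keeps the quadratic form associated with $-\Delta+\lm V-\bt$ coercive for $\lm$ large, $J_{\lm,\bt}$ is bounded below by a positive constant on $\cn_{\lm,\bt}$, which is then a complete $C^1$ manifold. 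The analytic heart of the argument is to prove that, for $\lm$ large, $J_{\lm,\bt}$ restricted to $\cn_{\lm,\bt}$ satisfies the local Palais--Smale condition strictly below $c_*$. This is obtained from a nonlocal Brezis--Lieb/concentration--compactness splitting for the Choquard nonlinearity, together with $(V_1)$--$(V_2)$: the measure condition on $\{V\le M_0\}$ and $\lm\to\infty$ control the term $\lm\int V|u|^2$ and thereby prevent mass from escaping to infinity, while the exact value $c_*$ comes from the energy of a single rescaled bubble $\tilde U$. This verification of the strict compactness threshold is the step I expect to be the main obstacle.

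The second ingredient is a pair of maps linking the topology of $\Om$ to the sublevel sets of $J_{\lm,\bt}$. Using the extremal family $\tilde U$ of Proposition~\ref{ExFu}, suitably truncated and rescaled so as to concentrate at a point $y\in\Om$, I would construct a continuous map $\Phi_\lm$ from $\Om$ into $\cn_{\lm,\bt}$ by projecting each concentrating test function radially onto the Nehari manifold. The decisive estimate here is that, because $N\ge4$ and $\bt>0$, the lower-order term $-\bt|u|^2$ in the quadratic part lowers the relevant minimax value strictly below $c_*$, exactly in the spirit of the Brezis--Nirenberg computation. Quantitatively one shows
$$
\sup_{y\in\Om}J_{\lm,\bt}(\Phi_\lm(y))\le c_\bt<c_*
$$
for every $\bt\le\bt^*$ and $\lm\ge\lm_\bt$, which is precisely how the number $c_\bt$ is produced.

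Conversely I would introduce a barycenter map $\ga$ on the sublevel set $J_{\lm,\bt}^{c_\bt}\cap\cn_{\lm,\bt}$. The local Palais--Smale analysis (again through the concentration--compactness dichotomy) forces any $u$ in this set to concentrate its $L^{2_\mu^{\ast}}$--mass near $\ov{\Om}$, so that $\ga(u)$ lands in a fixed $\de$--neighborhood $\Om_\de$ of $\Om$ that retracts onto $\Om$. Evaluating $\ga\circ\Phi_\lm$ on the concentrating bubbles shows that this composition is homotopic to the inclusion $\Om\hookrightarrow\Om_\de$, and the standard category inequality then yields
$$
\cat\big(J_{\lm,\bt}^{c_\bt}\cap\cn_{\lm,\bt}\big)\ge\cat(\Om).
$$

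Finally, since $\cn_{\lm,\bt}$ is complete, $J_{\lm,\bt}$ is bounded below on it and satisfies Palais--Smale below $c_*\!>\!c_\bt$, the Lusternik--Schnirelmann theory furnishes at least $\cat(J_{\lm,\bt}^{c_\bt}\cap\cn_{\lm,\bt})\ge\cat(\Om)$ constrained critical points with energy $\le c_\bt$; as constrained critical points on the Nehari manifold are genuine critical points of $J_{\lm,\bt}$, these are solutions of \eqref{CCE}. As noted, the principal difficulty is the nonlocal compactness analysis underlying both the strict threshold $c_\bt<c_*$ and the barycenter concentration: one must rule out mass dispersing to infinity (controlled by $(V_2)$ and $\lm$ large) and splitting into boundary half-bubbles, each requiring a Choquard-adapted version of the classical lemmas rather than their pointwise Sobolev analogues.
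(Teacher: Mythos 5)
Your plan is the Benci--Cerami barycenter scheme that the paper itself follows: local Palais--Smale below $c_*$ (the paper's Proposition \ref{PS}), a map from the topology of $\Om$ into a low sublevel set of the Nehari manifold, a barycenter map back into a neighborhood of $\Om$, the category inequality, and Lusternik--Schnirelmann theory. Two implementation points differ. First, for the inward map the paper does not project truncated bubbles centered at each $y\in\Om$; it translates a fixed minimizer $u_{\beta,B_r}$ of the Dirichlet problem on a small ball $B_r\subset\Om$ to $B_r(y)$ for $y\in\Om_r^-$. This gives energy exactly $c(\beta,B_r)$, already known to be $<c_*$ from the Brezis--Nirenberg-type estimate (Lemma \ref{MPE3}), and avoids the uniformity issue your construction has near $\partial\Om$, where the truncation radius of a bubble centered at $y$ must shrink; in any case the domain of your $\Phi_\lm$ should be an interior retract such as $\Om_r^-$, not all of $\Om$.

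The genuine gap is in your justification of the barycenter localization. You assert that ``the local Palais--Smale analysis forces any $u$ in this set to concentrate its mass near $\ov\Om$.'' The local $(PS)$ condition below $c_*$ gives compactness of $(PS)$ sequences; it says nothing about the barycenter of an \emph{arbitrary} element of the sublevel set $\{J_{\lm,\bt}\le c_\bt\}\cap\cn_{\lm,\bt}$, and indeed no localization holds for a fixed $\bt$ without further input. This is precisely where the smallness $\bt\le\bt^*$ is used, and your sketch never explains why $\bt^*$ must be small. The paper's mechanism is a two-step limiting argument: as $\bt\to0$ one has $c(\bt,\Om),\,c(\bt,B_r)\to c_*$ (Lemma \ref{MS2}), so any $u$ in the sublevel set becomes, after projection onto the Nehari manifold of the limit functional $J_*$ (Lemma \ref{MS1}), an almost-minimizer of $J_*$ at level $c_*$; Lemma \ref{MS3} classifies such sequences as single Aubin--Talenti-type bubbles $U_{b_n}(\cdot-x_n)$ with $x_n\in\Om$ and $b_n\to0$, whence the barycenter lies in $\ov\Om$ (Proposition \ref{MS4}); the passage from the limit problem on $\Om$ to $J_{\lm,\bt}$ on $\R^N$ for $\lm$ large is then a separate contradiction argument (Proposition \ref{MS5}) using the truncated barycenter $\alpha_c$ and the case distinction on the weak limit. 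Without this chain --- in particular without the classification of almost-minimizers and the limit $\bt\to0$ --- the claim that $\ga$ maps the sublevel set into $\Om_\de$ is unsupported, and the category inequality does not follow.
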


Finally we are interested in the critical Choquard equation \eqref{CCE} with indefinite potential. In this case we  assume that $\beta>\beta_{1},\beta\neq\beta_{j}$ for any $j>1$ and introduce assumption

$(V_3)$ $\liminf_{|x|\rightarrow\infty}V(x)>0$.

 The result says that
\begin{thm}\label{EXS4}
Suppose that assumptions $(V_1)$ and $(V_3)$ hold, $0<\mu<4$, $N\geq4$. Then, for any $\beta>\beta_{1},\beta\neq\beta_{j}$, $j>1$,  there exists $\lambda_{\beta}>0$ such that, for each $\lambda\geq\lambda_{\beta}$, equation \eqref{CCE} has at least one ground state solution $u_{\lambda}$. Furthermore, for any sequences $\lambda_{n}\rightarrow\infty$, the solution sequence $\{u_{\lambda_{n}}\}$ has a subsequence converging to a ground state solution  $u$ of \eqref{LCCE}.
\end{thm}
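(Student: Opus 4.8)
The plan is to treat \eqref{CCE} with $\beta>\beta_1$ as a strongly indefinite problem and to build a ground state through a linking-type minimax scheme, the essential difficulty being to combine the indefinite structure with the loss of compactness of the critical Hardy--Littlewood--Sobolev nonlinearity. First I would fix the variational framework. Since $V\geq 0$, the form $u\mapsto\int_{\R^N}(|\nabla u|^2+\lambda V u^2)$ induces an inner product on $E_\lambda:=\{u\in H^1(\R^N):\int_{\R^N}\lambda V u^2<\infty\}$, and $(V_3)$ guarantees that the bottom of the essential spectrum of $-\Delta+\lambda V$ tends to $+\infty$ as $\lambda\to\infty$. Consequently, for $\lambda$ large the self-adjoint operator $-\Delta+\lambda V-\beta$ has only finitely many eigenvalues below $0$; moreover the $\lambda$-eigenvalues converge to the Dirichlet eigenvalues $\beta_j$ of $-\Delta$ on $\Omega$, so the hypothesis $\beta\neq\beta_j$ expels $0$ from the spectrum. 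This yields an orthogonal splitting $E_\lambda=E_\lambda^-\oplus E_\lambda^+$ with $\dim E_\lambda^-<\infty$, on whose two summands the quadratic part of $J_{\lambda,\beta}$ is negative-definite and positive-definite respectively.

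Second, on this splitting I would verify the geometry of the generalized linking theorem (Benci--Rabinowitz type, or equivalently the Nehari--Pankov reduction of Szulkin--Weth): $J_{\lambda,\beta}$ is bounded below on a small sphere of $E_\lambda^+$ and tends to $-\infty$ on the boundary of $\{v+se:v\in E_\lambda^-,\ s\geq0\}$ for a fixed $e\in E_\lambda^+$, because the superquadratic nonlocal term eventually dominates the quadratic one. This produces a bounded Palais--Smale sequence at a minimax level $c_\lambda$, which I would identify with the ground-state level by minimizing $J_{\lambda,\beta}$ over the Nehari--Pankov manifold $\mathcal{N}_\lambda$.

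Third, and this is where the real work lies, I would establish the strict estimate $c_\lambda<\frac{N+2-\mu}{4N-2\mu}S_{H,L}^{\frac{2N-\mu}{N+2-\mu}}$ together with the corresponding compactness. For the level estimate I would insert suitably truncated and rescaled Choquard bubbles $\tilde U$ (from Proposition \ref{ExFu}) into the linking class, splitting off the contribution of the finite-dimensional negative part; the restriction $0<\mu<4$ is precisely what makes the lower-order linear perturbation a strictly lower-order correction to the bubble energy $\frac{N+2-\mu}{4N-2\mu}S_{H,L}^{\frac{2N-\mu}{N+2-\mu}}$, so that $c_\lambda$ is pushed strictly below this first threshold. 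For compactness I would run a concentration analysis on the Palais--Smale sequence: escape of mass to infinity is ruled out by $(V_3)$ together with $\lambda$ large (mass cannot accumulate where $\lambda V$ is large), while the threshold bound, via the nonlocal Brezis--Lieb splitting for the Riesz-convolution term, precludes the formation of a critical bubble. Hence the sequence converges strongly and yields a ground-state critical point $u_\lambda$. The main obstacle is exactly this simultaneous control of the indefinite linking structure and of the critical nonlocal loss of compactness; the nonlocal Brezis--Lieb lemma and the sharp level estimate are the decisive technical ingredients.

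Finally, for the asymptotics as $\lambda_n\to\infty$ I would first derive from the level estimate a bound for $u_{\lambda_n}$ that is uniform in $\lambda$, giving boundedness in $H^1(\R^N)$ and hence a weak limit $u$. The uniform bound on $\int_{\R^N}\lambda_n V\,u_{\lambda_n}^2$ forces $u\equiv 0$ on $\{V>0\}$, so $u\in H_0^1(\Omega)$. Passing to the limit in the weak formulation shows that $u$ solves the limiting problem \eqref{LCCE}, and comparing energies together with the strong convergence, obtained once more by using the threshold to prevent concentration, identifies $u$ as a ground state of \eqref{LCCE}.
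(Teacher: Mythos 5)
Your overall architecture coincides with the paper's: the splitting $E_\lambda=E_\lambda^-\oplus E_\lambda^+$ with $\dim E_\lambda^-<\infty$ coming from $(V_3)$ and the eigenvalue convergence $\zeta_j^\lambda\to\beta_j-\beta$, a Nehari--Pankov/Ackermann reduction $u\mapsto u+h(u)$, a Palais--Smale analysis below the threshold $\frac{N+2-\mu}{4N-2\mu}S_{H,L}^{\frac{2N-\mu}{N+2-\mu}}$ via the nonlocal Br\'ezis--Lieb splitting, and the limit $\lambda_n\to\infty$ forcing $u\in H_0^1(\Omega)$ and energy convergence $c_{\lambda_n}\to c(\beta,\Omega)$. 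That part is sound and matches Sections 5--6 of the paper.

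There is, however, one genuine gap: you never justify why the Szulkin--Weth/Nehari--Pankov reduction applies to the \emph{nonlocal} term, and you misplace the role of the hypothesis $0<\mu<4$. The reduction requires that for each $u\in E_\lambda^+$ the map $v\mapsto J_{\lambda,\beta}(u+v)$ have a unique maximizer on $E_\lambda^-$; this follows from strict concavity, i.e.\ from the convexity of $\Psi(u)=\frac{1}{2\cdot2_\mu^*}\int\!\!\int\frac{|u(x)|^{2_\mu^*}|u(y)|^{2_\mu^*}}{|x-y|^\mu}\,dx\,dy$, which in turn holds precisely because $2_\mu^*=\frac{2N-\mu}{N-2}\ge 2$, i.e.\ $\mu\le 4$. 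This is the \emph{only} place the restriction $0<\mu<4$ is needed; it is not a lower-order-correction issue in the bubble expansion. Your claim that $\mu<4$ is what makes the linear perturbation a strictly lower-order correction to the bubble energy is incorrect (for $N\ge4$ the estimate $-\beta O(\varepsilon^2)$ versus $O(\varepsilon^{N-2})$ works for all $0<\mu<N$, as in Lemma \ref{MPE3} and Lemma \ref{MS2}), and without the convexity observation your minimax level $c_\lambda$ is not identified with the ground-state level on $\mathcal{N}_\lambda$. Relatedly, your proposed direct insertion of bubbles into the linking class on $\R^N$, with control of the interaction with the finite-dimensional negative part, is considerably more delicate than what is needed: since $H_0^1(\Omega)\subset E_\lambda$ gives $\mathcal{N}_{\beta,\Omega}\subset\mathcal{N}_\lambda$, one has immediately $c_\lambda\le c(\beta,\Omega)$, and the strict bound $c(\beta,\Omega)<\frac{N+2-\mu}{4N-2\mu}S_{H,L}^{\frac{2N-\mu}{N+2-\mu}}$ is already available for every $\beta>0$, $\beta\ne\beta_j$, from the bounded-domain analysis. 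Finally, the boundedness of Palais--Smale sequences in the indefinite setting is not automatic from the level bound alone; it uses that $E_\lambda^-$ is finite dimensional together with the fact that $\|\cdot\|_{NL}$ is a norm, a point your sketch passes over.
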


\begin{Rem}\label{E0}
Obviously assumption $(V_3)$ is stronger than
assumption $(V_2)$. To see this, we only need to take $M_{0}=\frac{1}{2}\liminf_{|x|\rightarrow\infty}V(x)$.
\end{Rem}

Throughout this paper we write $|\cdot|_{q}$ for the $L^{q}(\mathbb{R}^N)$-norm, $q\in[1,\infty]$ and always assume that conditions $(V_1)$ and $(V_2)$ hold in Sections 2-4, conditions $(V_1)$ and $(V_3)$ hold in Sections 5-6, $0<\mu<N$ and $N\geq4$. We denote by $C, C_{1}, C_{2}, C_{3}, \cdots$ the different positive constants and
$$
\|u\|_{H^{1}}^{2}:=\int_{\mathbb{R}^N}(|\nabla u|^{2}+u^{2})dx
$$
the standard norm on $H^{1}(\mathbb{R}^N)$.

An outline of the paper is as follows: In Section 2, we give some preliminary results for the case $0<\beta<\beta_{1}$ and prove Palais-Smale condition ($(PS)$ condition, for short). In Section 3, we prove the existence of ground states for \eqref{CCE} by a problem on bounded region and show the certain concentration behavior of the solutions occurs as $\lambda\rightarrow\infty$. In Section 4, the Lusternik-Schnirelmann theory would give the existence of at least $cat(\Omega)$ critical points for \eqref{CCE}. In Section 5, we give some preliminary results for the case $\beta>\beta_{1}$, $\beta\neq\beta_{j}$ for any $j>1$. In Section 6, we prove the existence of ground states for \eqref{CCE} with indefinite potential and show the certain concentration behavior of the solutions occurs as $\lambda\rightarrow\infty$.

\section{Existence of solutions for the case $0<\beta<\beta_{1}$}
Next we denote by
$$
E=\left\{u\in H^{1}(\mathbb{R}^N):\int_{\mathbb{R}^{N}}Vu^{2}dx<+\infty\right\}
$$
the Hilbert space equipped with norm
$$
\|u\|=\left(\|u\|_{H^{1}}^{2}+\int_{\mathbb{R}^{N}}Vu^{2}dx\right)^{\frac{1}{2}}.
$$
If $\lambda>0$, then it is equivalent to the norms
$$
\|u\|_{\lambda}=\left(\|u\|_{H^{1}}^{2}+\lambda\int_{\mathbb{R}^{N}}Vu^{2}dx\right)^{\frac{1}{2}}.
$$
Obviously, $H_{0}^{1}(\Omega)\subset E$, where $\Omega$ is defined as in $(V_{1})$.

We denote the operator $L_{\lambda,\beta}:=-\Delta+\lambda V(x)-\beta$ and particularly, $L_{\lambda,0}:=-\Delta+\lambda V(x)$ and $L_{0,\beta}:=-\Delta-\beta$. Observe that
$$
0\leq a_{\lambda}=\inf \{\langle L_{\lambda,0}u,u\rangle:u\in E,|u|_{2}=1\}
$$
and that $a_{\lambda}$ is nondecreasing in $\lambda$.

The following two Lemmas are taken from \cite{CD1}.
\begin{lem}\label{EMB}
If $u_{n}\in E$ be such that $\lambda_{n}\rightarrow\infty$ and $\|u_{n}\|_{\lambda_{n}}^{2}<C$. Then, there is a $u\in H_{0}^{1}(\Omega)$ such that, up to a subsequence, $u_{n}\rightarrow u$ in $L^{2}(\mathbb{R}^N)$.
\end{lem}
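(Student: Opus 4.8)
The plan is to combine the $H^1$-boundedness that follows from $V\geq 0$ with the finite-measure condition $(V_2)$ to rule out loss of $L^2$-mass at infinity. First I would observe that since $V\geq 0$ we have $\|u_n\|_{H^1}^2\leq\|u_n\|_{\lambda_n}^2<C$, so $\{u_n\}$ is bounded in $H^1(\mathbb{R}^N)$. Passing to a subsequence, $u_n\rightharpoonup u$ weakly in $H^1(\mathbb{R}^N)$, $u_n\to u$ a.e.\ and in $L^2_{\mathrm{loc}}(\mathbb{R}^N)$ by Rellich--Kondrachov, and $\{u_n\}$ is bounded in $L^{2^*}(\mathbb{R}^N)$, $2^*=2N/(N-2)$, by the Sobolev embedding.

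Next I would identify the limit and locate its support. From $\lambda_n\int_{\mathbb{R}^N}Vu_n^2\,dx\leq C$ we get $\int_{\mathbb{R}^N}Vu_n^2\,dx\leq C/\lambda_n\to 0$; since $Vu_n^2\to Vu^2$ a.e.\ with $V\geq 0$, Fatou's lemma gives $\int_{\mathbb{R}^N}Vu^2\,dx=0$, so $u=0$ a.e.\ on $\{V>0\}=\mathbb{R}^N\setminus V^{-1}(0)=\mathbb{R}^N\setminus\overline{\Omega}$. As $u\in H^1(\mathbb{R}^N)$ vanishes a.e.\ outside $\overline{\Omega}$ and $\partial\Omega$ is smooth, the restriction $u|_{\Omega}$ lies in $H_0^1(\Omega)$, which is the asserted membership.

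The core of the proof is the strong $L^2(\mathbb{R}^N)$-convergence, where $(V_2)$ is indispensable. Set $A:=\{x:V(x)\leq M_0\}$, so $\mathcal{L}(A)<\infty$. I would control the tail outside a large ball $B_R$ by splitting $B_R^c=(B_R^c\cap A^c)\cup(B_R^c\cap A)$. On $B_R^c\cap A^c$ one has $V\geq M_0$, hence
\[
\int_{B_R^c\cap A^c}u_n^2\,dx\leq\frac{1}{M_0}\int_{\mathbb{R}^N}Vu_n^2\,dx\leq\frac{C}{M_0\lambda_n}\longrightarrow 0 .
\]
On $B_R^c\cap A$, where $V$ may vanish but the set has small measure, I would use the $L^{2^*}$-bound and Hölder's inequality:
\[
\int_{B_R^c\cap A}u_n^2\,dx\leq\Big(\int_{\mathbb{R}^N}|u_n|^{2^*}dx\Big)^{2/2^*}\mathcal{L}(B_R^c\cap A)^{2/N}\leq C\,\mathcal{L}(B_R^c\cap A)^{2/N},
\]
which, since $\mathcal{L}(A)<\infty$ forces $\mathcal{L}(B_R^c\cap A)\to 0$ as $R\to\infty$, is small uniformly in $n$ for $R$ large.

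To conclude I would run an $\varepsilon$--$R$ argument: given $\varepsilon>0$, fix $R$ so large that both the $B_R^c\cap A$ term above and $\int_{B_R^c}u^2\,dx$ are below $\varepsilon$; for this fixed $R$, Rellich--Kondrachov yields $\int_{B_R}|u_n-u|^2\,dx\to 0$ while the $B_R^c\cap A^c$ term tends to $0$ as $n\to\infty$. Adding the pieces gives $\limsup_n\int_{\mathbb{R}^N}|u_n-u|^2\,dx\leq C\varepsilon$, and letting $\varepsilon\to 0$ proves $u_n\to u$ in $L^2(\mathbb{R}^N)$. The main obstacle is precisely this tail control: weak $H^1$-convergence only delivers local $L^2$-compactness, and one needs both the coercivity $\lambda_n\int Vu_n^2\leq C$ (to annihilate the mass where $V$ is bounded away from $0$) and the finite-measure hypothesis $(V_2)$ together with the Sobolev embedding (to annihilate the mass on $\{V\leq M_0\}$ far from the origin); neither ingredient by itself is enough.
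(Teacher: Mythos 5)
Your proof is correct and complete; note that the paper does not prove this lemma at all but simply quotes it from \cite{CD1}, and your argument (uniform $H^{1}$-bound from $V\geq 0$, identification of the weak limit in $H_{0}^{1}(\Omega)$ via $\int V u^{2}=0$, and the tail estimate splitting $\{V>M_{0}\}$, handled by $\lambda_{n}\int Vu_{n}^{2}\leq C$, from $\{V\leq M_{0}\}$, handled by H\"older with the $L^{2^{*}}$-bound and the finite measure in $(V_{2})$) is exactly the standard proof given in that reference.
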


\begin{lem}\label{PR2} For every $0<\beta<\beta_{1}$, there exists $\lambda_{\beta}>0$ such that $a_{\lambda}\geq(\beta+\beta_{1})/2$ for $\lambda\geq\lambda_{\beta}$. Consequently,
$$
C_{\beta}\|u\|_{\lambda}^{2}\leq\langle L_{\lambda,\beta}u,u\rangle
$$
for all $u\in E,$ $\lambda\geq\lambda_{\beta}$, where $C_{\beta}>0$ is a constant.
\end{lem}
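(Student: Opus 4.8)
The plan is to first establish that $\liminf_{\lambda\to\infty}a_{\lambda}\geq\beta_{1}$, and then to deduce both assertions from this by elementary algebra. Since $a_{\lambda}$ is nondecreasing in $\lambda$, the limit $a_{\infty}:=\lim_{\lambda\to\infty}a_{\lambda}\in(0,+\infty]$ exists, and it suffices to show $a_{\infty}\geq\beta_{1}$. I would argue by contradiction, assuming $a_{\infty}<\beta_{1}$ (in particular $a_{\infty}<\infty$). Fix a sequence $\lambda_{n}\to\infty$ and, for each $n$, a near-minimizer $u_{n}\in E$ with $|u_{n}|_{2}=1$ and $\langle L_{\lambda_{n},0}u_{n},u_{n}\rangle\leq a_{\lambda_{n}}+\tfrac{1}{n}$. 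Because
$$
\|u_{n}\|_{\lambda_{n}}^{2}=\langle L_{\lambda_{n},0}u_{n},u_{n}\rangle+|u_{n}|_{2}^{2}\leq a_{\infty}+2,
$$
the sequence $\{\|u_{n}\|_{\lambda_{n}}\}$ is bounded, so Lemma \ref{EMB} yields $u\in H_{0}^{1}(\Omega)$ with $u_{n}\to u$ in $L^{2}(\R^{N})$ along a subsequence; passing to the limit in $|u_{n}|_{2}=1$ gives $|u|_{2}=1$, hence $u\neq0$. Moreover $\int_{\R^{N}}|\nabla u_{n}|^{2}\leq\langle L_{\lambda_{n},0}u_{n},u_{n}\rangle$ is bounded, so after a further subsequence $u_{n}\rightharpoonup u$ weakly in $D^{1,2}(\R^{N})$, the weak limit agreeing with the $L^{2}$-limit.

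The crux is to bound the gradient energy from below. By weak lower semicontinuity of $u\mapsto\int|\nabla u|^{2}$ and nonnegativity of the potential term,
$$
\int_{\R^{N}}|\nabla u|^{2}\leq\liminf_{n\to\infty}\int_{\R^{N}}|\nabla u_{n}|^{2}\leq\liminf_{n\to\infty}\langle L_{\lambda_{n},0}u_{n},u_{n}\rangle\leq a_{\infty}.
$$
On the other hand, since $u\in H_{0}^{1}(\Omega)$ with $|u|_{2}=1$, the Rayleigh-quotient characterization of the first Dirichlet eigenvalue gives $\int_{\R^{N}}|\nabla u|^{2}=\int_{\Omega}|\nabla u|^{2}\geq\beta_{1}\int_{\Omega}u^{2}=\beta_{1}$. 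Combining the two displays forces $a_{\infty}\geq\beta_{1}$, contradicting the assumption; hence $a_{\infty}\geq\beta_{1}$.

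Finally I would extract the two conclusions. Since $\beta<\beta_{1}$, the threshold $(\beta+\beta_{1})/2$ is strictly below $\beta_{1}\leq a_{\infty}$, so by monotonicity there is $\lambda_{\beta}>0$ with $a_{\lambda}\geq(\beta+\beta_{1})/2$ for all $\lambda\geq\lambda_{\beta}$. For the coercivity estimate, normalizing $u$ gives $\langle L_{\lambda,0}u,u\rangle\geq a_{\lambda}|u|_{2}^{2}\geq\tfrac{\beta+\beta_{1}}{2}|u|_{2}^{2}$ for every $u\in E$. Writing $A=\langle L_{\lambda,0}u,u\rangle$ and $b=|u|_{2}^{2}$, one has $\|u\|_{\lambda}^{2}=A+b$ and $\langle L_{\lambda,\beta}u,u\rangle=A-\beta b$, so that using $A\geq\tfrac{\beta+\beta_{1}}{2}b$ the desired inequality $A-\beta b\geq C_{\beta}(A+b)$ reduces to choosing any
$$
C_{\beta}\in\Bigl(0,\ \tfrac{\beta_{1}-\beta}{2+\beta+\beta_{1}}\Bigr],
$$
which is a strictly positive range precisely because $\beta<\beta_{1}$.

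The main obstacle is the lower bound in the second paragraph: one must verify that the $L^{2}$-limit furnished by Lemma \ref{EMB} lies in $H_{0}^{1}(\Omega)$, is nonzero, and coincides with the weak $D^{1,2}$-limit, so that weak lower semicontinuity of the Dirichlet integral and the eigenvalue bound on $\Omega$ can be applied together. Everything else is monotonicity of $a_{\lambda}$ and a one-variable inequality.
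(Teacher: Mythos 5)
Your proof is correct. The paper itself gives no argument for this lemma (it is quoted verbatim from \cite{CD1}), so there is nothing internal to compare against; your contradiction argument --- near-minimizers $u_n$ with $\|u_n\|_{\lambda_n}$ bounded, Lemma \ref{EMB} to land in $H_0^1(\Omega)$ with $|u|_2=1$, weak lower semicontinuity of the Dirichlet integral against the Rayleigh-quotient bound $\int_\Omega|\nabla u|^2\geq\beta_1$ --- is the standard route and is complete, and the final algebra giving $C_\beta\leq(\beta_1-\beta)/(2+\beta+\beta_1)$ checks out. (The only cosmetic slip is asserting $a_\infty\in(0,+\infty]$ before it is proved; a priori one only knows $a_\infty\in[0,+\infty]$, which is all the argument needs.)
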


It follows from Lemma \ref{PR2} that the operator $L_{\lambda,\beta}$ is positive if $\lambda\geq\lambda_{\beta}$ and thus we can introduce on $E$ a new inner product
$$
( u_{1},u_{2})=\langle L_{\lambda,\beta}^{\frac{1}{2}}u_{1},
L_{\lambda,\beta}^{\frac{1}{2}}u_{2}\rangle
$$
 with the norm
$$
\|u\|_{L_{\lambda,\beta}}=( u,u)^{\frac{1}{2}}.
$$
Moreover, noting that for $\beta>0$, 
$$
\|u\|_{L_{\lambda,\beta}}\leq\|u\|_{\lambda}, \ \forall u\in E,
$$
we know $\|u\|_{L_{\lambda,\beta}}$ in fact is equivalent to the norm $\|u\|_{\lambda}$ on $E$ if $\lambda\geq\lambda_{\beta}$. For future use, enlarging $\lambda_{\beta}$ if necessary, we may assume that $\lambda_{\beta}\geq\beta/M_{0}$, thus
\begin{equation}\label{B1}
\lambda M_{0}-\beta\geq0 \hspace{4.14mm}\mbox{for} \hspace{1.14mm}\mbox{all} \hspace{1.14mm}\lambda\geq\lambda_{\beta},
\end{equation}
where $M_{0}$ is given in $(V_2)$.

Since we are considering the critical case, we need to show where the compactness condition is recovered.
\begin{Prop}\label{PS} For each $0<\beta<\beta_{1}$ and $\lambda\geq\lambda_{\beta}$, $J_{\lambda,\beta}$ satisfies the $(PS)_{c}$
condition for all $c<\frac{N+2-\mu}{4N-2\mu}S_{H,L}^{\frac{2N-\mu}{N+2-\mu}}$.
\end{Prop}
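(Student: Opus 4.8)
The plan is to argue by contradiction on a Palais--Smale sequence and to quantify the possible loss of compactness through the best constant $S_{H,L}$. Fix $0<\beta<\beta_1$ and $\lambda\ge\lambda_\beta$, and let $\{u_n\}\subset E$ satisfy $J_{\lambda,\beta}(u_n)\to c$ and $J_{\lambda,\beta}'(u_n)\to 0$. First I would establish boundedness in $(E,\|\cdot\|_{L_{\lambda,\beta}})$: evaluating $J_{\lambda,\beta}(u_n)-\frac{1}{2\cdot2_{\mu}^{\ast}}\langle J_{\lambda,\beta}'(u_n),u_n\rangle=(\frac12-\frac{1}{2\cdot2_{\mu}^{\ast}})\|u_n\|_{L_{\lambda,\beta}}^{2}$ and using $\frac12-\frac{1}{2\cdot2_{\mu}^{\ast}}>0$ (since $2_{\mu}^{\ast}>1$) yields a uniform bound on $\|u_n\|_{L_{\lambda,\beta}}^{2}$, hence on $\|u_n\|_{\lambda}$ by Lemma \ref{PR2}. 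Passing to a subsequence, $u_n\rightharpoonup u$ in $E$ (hence in $H^{1}(\R^N)$), $u_n\to u$ in $L^{q}_{loc}$ for $q<2^{*}$ and a.e.; a routine test-function argument then shows $J_{\lambda,\beta}'(u)=0$, so $u$ is a critical point and in particular $\|u\|_{L_{\lambda,\beta}}^{2}=\cd(u)$, where I write $\cd(w):=\int_{\R^N}\int_{\R^N}\frac{|w(x)|^{2_{\mu}^{\ast}}|w(y)|^{2_{\mu}^{\ast}}}{|x-y|^{\mu}}\,dx\,dy$.

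Setting $v_n:=u_n-u$, the second step is a Brezis--Lieb type decomposition. Weak convergence in the Hilbert space $(E,(\cdot,\cdot))$ gives $\|u_n\|_{L_{\lambda,\beta}}^{2}=\|u\|_{L_{\lambda,\beta}}^{2}+\|v_n\|_{L_{\lambda,\beta}}^{2}+o(1)$, while the nonlocal Brezis--Lieb lemma for the Choquard term (obtained from the Hardy--Littlewood--Sobolev inequality, Proposition \ref{HLS}) gives $\cd(u_n)=\cd(u)+\cd(v_n)+o(1)$. Inserting $\langle J_{\lambda,\beta}'(u_n),u_n\rangle=o(1)$ and $\langle J_{\lambda,\beta}'(u),u\rangle=0$ into these two splittings and subtracting, I obtain
$$
\|v_n\|_{L_{\lambda,\beta}}^{2}=\cd(v_n)+o(1).
$$
Denoting $b:=\lim\|v_n\|_{L_{\lambda,\beta}}^{2}=\lim\cd(v_n)\ge0$, the whole claim reduces to showing $b=0$, which forces $u_n\to u$ strongly in $E$.

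The crucial and most delicate step converts $b$ into a lower bound via $S_{H,L}$, and here the potential well enters. I would first show $\int_{\{V\le M_0\}}|v_n|^{2}\to0$: on balls this is Rellich compactness of $v_n\rightharpoonup0$ in $H^{1}$, and on the exterior it follows from $\mathcal{L}\{V\le M_0\}<\infty$ in $(V_2)$ together with the uniform $L^{2^{*}}$ bound and H\"older. Combining this with $\lambda\int_{\R^N}V|v_n|^{2}\ge\lambda M_0\int_{\{V>M_0\}}|v_n|^{2}$ and the sign condition $\lambda M_0-\beta\ge0$ of \eqref{B1}, I get $\lambda\int V|v_n|^{2}-\beta\int|v_n|^{2}\ge-o(1)$, whence
$$
\int_{\R^N}|\nabla v_n|^{2}\,dx=\|v_n\|_{L_{\lambda,\beta}}^{2}-\Big(\lambda\int_{\R^N}V|v_n|^{2}-\beta\int_{\R^N}|v_n|^{2}\Big)\le\|v_n\|_{L_{\lambda,\beta}}^{2}+o(1).
$$
The definition \eqref{S1} of $S_{H,L}$ gives $S_{H,L}\,\cd(v_n)^{\frac{N-2}{2N-\mu}}\le\int|\nabla v_n|^{2}\le\|v_n\|_{L_{\lambda,\beta}}^{2}+o(1)$, so in the limit $S_{H,L}\,b^{\frac{N-2}{2N-\mu}}\le b$, i.e. either $b=0$ or $b\ge S_{H,L}^{\frac{2N-\mu}{N+2-\mu}}$. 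Finally, the energy splitting $c=J_{\lambda,\beta}(u)+(\frac12-\frac{1}{2\cdot2_{\mu}^{\ast}})b+o(1)$ together with $J_{\lambda,\beta}(u)=(\frac12-\frac{1}{2\cdot2_{\mu}^{\ast}})\cd(u)\ge0$ shows that $b>0$ would force $c\ge\frac{N+2-\mu}{4N-2\mu}S_{H,L}^{\frac{2N-\mu}{N+2-\mu}}$, contradicting the hypothesis. Hence $b=0$.

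I expect the main obstacle to be precisely this third step, namely controlling the Dirichlet energy $\int|\nabla v_n|^{2}$ by $\|v_n\|_{L_{\lambda,\beta}}^{2}$. Unlike the translation-invariant problem on $\R^N$, the remainder $\lambda\int V|v_n|^{2}-\beta\int|v_n|^{2}$ need not vanish along the sequence, since the embedding $E\hookrightarrow L^{2}(\R^N)$ is not compact for fixed $\lambda$; it is only the interplay of the finite-measure condition $(V_2)$ with the normalization \eqref{B1} that saves the argument. Verifying the nonlocal Brezis--Lieb identity with the correct error control is the other technical point.
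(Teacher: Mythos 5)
Your proposal is correct and follows essentially the same route as the paper: boundedness via $J_{\lambda,\beta}(u_n)-\frac{1}{2\cdot2_{\mu}^{\ast}}\langle J_{\lambda,\beta}'(u_n),u_n\rangle$, identification of the weak limit as a critical point with nonnegative energy, the Br\'ezis--Lieb splitting of both the quadratic form and the Choquard term, the use of $(V_2)$ together with $\lambda M_0-\beta\geq0$ to control $\int|\nabla v_n|^2$ by $\|v_n\|_{L_{\lambda,\beta}}^2$, and the dichotomy $b=0$ or $b\geq S_{H,L}^{\frac{2N-\mu}{N+2-\mu}}$ closed by the energy-level restriction. The only cosmetic difference is that you prove $\int_{\{V\leq M_0\}}|v_n|^2\to0$ directly via Rellich plus H\"older on a finite-measure set, where the paper invokes its Lemma 2.1.
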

\begin{proof}
Let $\{u_{j}\}$ be a $(PS)_{c}$ sequence, i.e.
\begin{equation}\label{B2}
J_{\lambda,\beta}(u_{j})\rightarrow c
\end{equation}
and
\begin{equation}\label{B3}
\sup\{|\langle J_{\lambda,\beta}'(u_{j}),\varphi\rangle|:\varphi\in E,\|\varphi\|_{L_{\lambda,\beta}}=1\}\rightarrow 0
\end{equation}
as $j\rightarrow+\infty$. By \eqref{B2} and \eqref{B3}, for any $j\in\mathbb{N}$, it easily follows that there exists $C_{1}>0$ such that
\begin{equation}\label{B4}
|J_{\lambda,\beta}(u_{j})|\leq C_{1}
\end{equation}
and
\begin{equation}\label{B5}
|\langle J_{\lambda,\beta}'(u_{j}),\frac{u_{j}}{\|u_{j}\|_{L_{\lambda,\beta}}}\rangle|\leq C_{1}.
\end{equation}
Consequently, we have
\begin{equation}\label{B6}
\aligned
\frac{N+2-\mu}{4N-2\mu}\langle L_{\lambda,\beta}u_{j},u_{j}\rangle&=J_{\lambda,\beta}(u_{j})-\frac{1}{2\cdot2_{\mu}^{\ast}}\langle J_{\lambda,\beta}'(u_{j}),u_{j}\rangle\\
&\leq C_{1}(1+\|u_{j}\|_{L_{\lambda,\beta}}),
\endaligned
\end{equation}
that is
$$
\|u_{j}\|_{L_{\lambda,\beta}}^{2}\leq C_{2}(1+\|u_{j}\|_{L_{\lambda,\beta}}),
$$
which means $\{u_{j}\}$ is bounded in $E$.

Now, up to a subsequence, still denoted by $\{u_{j}\}$, we may assume that there exists $u_{\infty}\in E$ such that $u_{j}\rightharpoonup u_{\infty}$ in $E$ and
\begin{equation}\label{B11}
u_{j}\rightarrow u_{\infty} \hspace{3.14mm} a.e. \hspace{2.14mm} \mbox{in} \hspace{2.14mm} \mathbb{R}^N
\end{equation}
as $j\rightarrow+\infty$. From the fact that $|u_{j}|^{2_{\mu}^{\star}}$ is bounded in $L^{\frac{2^{\ast}}{2_{\mu}^{\ast}}}(\mathbb{R}^N)$ we have
$$
|u_{j}|^{2_{\mu}^{\ast}}\rightharpoonup |u_{\infty}|^{2_{\mu}^{\ast}} \hspace{3.14mm} \mbox{in} \hspace{3.14mm} L^{\frac{2N}{2N-\mu}}(\mathbb{R}^N)
$$
as $j\rightarrow+\infty$.
By the Hardy-Littlewood-Sobolev inequality,
the Riesz potential defines a linear continuous map from  $L^{\frac{2N}{2N-\mu}}(\mathbb{R}^N)$ to $L^{\frac{2N}{\mu}}(\mathbb{R}^N)$,  we know that
$$
\int_{\mathbb{R}^N}
\frac{|u_{j}(y)|^{2_{\mu}^{\ast}}}{|x-y|^{\mu}}dy\rightharpoonup \int_{\mathbb{R}^N}
\frac{|u_{\infty}(y)|^{2_{\mu}^{\ast}}}{|x-y|^{\mu}}dy \hspace{3.14mm} \mbox{in} \hspace{3.14mm} L^{\frac{2N}{\mu}}(\mathbb{R}^N)
$$
as $j\rightarrow+\infty$. Combining this with the fact that
$$
|u_{j}|^{2_{\mu}^{\ast}-2}u_{j}\rightharpoonup |u_{\infty}|^{2_{\mu}^{\ast}-2}u_{\infty} \hspace{3.14mm} \mbox{in} \hspace{3.14mm} L^{\frac{2N}{N-\mu+2}}(\mathbb{R}^N)
$$
as $j\rightarrow+\infty$, we have
\begin{equation}\label{B12}
\int_{\mathbb{R}^N}
\frac{|u_{j}(y)|^{2_{\mu}^{\ast}}}{|x-y|^{\mu}}dy|u_{j}(x)|^{2_{\mu}^{\ast}-2}u_{j}(x)\rightharpoonup \int_{\mathbb{R}^N}
\frac{|u_{\infty}(y)|^{2_{\mu}^{\ast}}}{|x-y|^{\mu}}dy|u_{\infty}(x)|^{2_{\mu}^{\ast}-2}u_{\infty}(x) \hspace{3.14mm} \mbox{in} \hspace{3.14mm} L^{\frac{2N}{N+2}}(\mathbb{R}^N)
\end{equation}
as $j\rightarrow+\infty$. Since, for any $\varphi\in E$
$
\langle J_{\lambda,\beta}'(u_{j}),\varphi\rangle\rightarrow0,
$
passing to the limit as $j\rightarrow+\infty$ and taking into account \eqref{B12} we get
$$
\int_{\mathbb{R}^{N}}(\nabla u_{\infty}\nabla\varphi +(\lambda V(x)-\beta) u_{\infty}\varphi )dx
=\int_{\mathbb{R}^N}\int_{\mathbb{R}^N}
\frac{|u_{\infty}(x)|^{2_{\mu}^{\ast}}|u_{\infty}(y)|^{2_{\mu}^{\ast}-2}u_{\infty}(y)\varphi(y)}
{|x-y|^{\mu}}dxdy
$$
for any $\varphi\in E$, that means $u_{\infty}$ is a solution of problem \eqref{CCE}.
Moreover, taking $\varphi=u_{\infty}\in E$ as a test function in \eqref{CCE}, we have
$$
\int_{\mathbb{R}^{N}}(|\nabla u_{\infty}|^{2}+(\lambda V(x)-\beta)  u_{\infty}^{2})dx
=\int_{\mathbb{R}^N}\int_{\mathbb{R}^N}
\frac{|u_{\infty}(x)|^{2_{\mu}^{\ast}}|u_{\infty}(y)|^{2_{\mu}^{\ast}}}{|x-y|^{\mu}}dxdy,
$$
thus
$$
J_{\lambda,\beta}(u_{\infty})=\frac{N+2-\mu}{4N-2\mu}\int_{\mathbb{R}^N}\int_{\mathbb{R}^N}
\frac{|u_{\infty}(x)|^{2_{\mu}^{\ast}}|u_{\infty}(y)|^{2_{\mu}^{\ast}}}{|x-y|^{\mu}}dxdy\geq0.
$$

Now, we write $v_{j}:=u_{j}-u_{\infty}$, then, $v_{j}\rightharpoonup0$ in $E$ and $v_{j}\rightarrow 0$ a.e. in $\mathbb{R}^N$. By the Br\'{e}zis-Lieb type splitting result for nonlocal term in \cite{GY} which says
$$
\int_{\mathbb{R}^N}\int_{\mathbb{R}^N}
\frac{|u_{j}(x)|^{2_{\mu}^{\ast}}|u_{j}(y)|^{2_{\mu}^{\ast}}}
{|x-y|^{\mu}}dxdy=\int_{\mathbb{R}^N}\int_{\mathbb{R}^N}
\frac{|v_{j}(x)|^{2_{\mu}^{\ast}}|v_{j}(y)|^{2_{\mu}^{\ast}}}
{|x-y|^{\mu}}dxdy+\int_{\mathbb{R}^N}\int_{\mathbb{R}^N}
\frac{|u_{\infty}(x)|^{2_{\mu}^{\ast}}|u_{\infty}(y)|^{2_{\mu}^{\ast}}}
{|x-y|^{\mu}}dxdy+o_{j}(1)
$$
as $j\rightarrow+\infty$,
we know that
\begin{equation}\label{B13}
\aligned
c&\leftarrow J_{\lambda,\beta}(u_{j})\\
&=\frac{1}{2}\int_{\mathbb{R}^{N}}(|\nabla v_{j}|^{2}+(\lambda V(x)-\beta) v_{j}^{2})dx+\frac{1}{2}\int_{\mathbb{R}^{N}}(|\nabla u_{\infty}|^{2}+(\lambda V(x)-\beta) u_{\infty}^{2})dx\\
&\hspace{0.3cm}-\frac{1}{2\cdot2_{\mu}^{\ast}}\int_{\mathbb{R}^N}\int_{\mathbb{R}^N}
\frac{|v_{j}(x)|^{2_{\mu}^{\ast}}|v_{j}(y)|^{2_{\mu}^{\ast}}}
{|x-y|^{\mu}}dxdy-\frac{1}{2\cdot2_{\mu}^{\ast}}\int_{\mathbb{R}^N}\int_{\mathbb{R}^N}
\frac{|u_{\infty}(x)|^{2_{\mu}^{\ast}}|u_{\infty}(y)|^{2_{\mu}^{\ast}}}
{|x-y|^{\mu}}dxdy+o_{j}(1)\\
&=J_{\lambda,\beta}(u_{\infty})+J_{\lambda,\beta}(v_{j})+o_{j}(1).
\endaligned
\end{equation}
Analogously, we have
$$
\langle J_{\lambda,\beta}'(u_{j}),u_{j}\rangle =\langle J_{\lambda, \beta}'(u_{\infty}),u_{\infty}\rangle+\langle J_{\lambda,\beta}'(v_{j}),v_{j}\rangle+o_{j}(1).
$$
It follows from $\langle J_{\lambda,\beta}'(u_{\infty}),u_{\infty}\rangle=0$ and $\langle J_{\lambda,\beta}'(u_{j}),u_{j}\rangle\rightarrow0$ that
$$
\int_{\mathbb{R}^{N}}(|\nabla v_{j}|^{2}+(\lambda V(x)-\beta) v_{j}^{2})dx\rightarrow b
\ \
\mbox{and}
\ \
\int_{\mathbb{R}^N}\int_{\mathbb{R}^N}
\frac{|v_{j}(x)|^{2_{\mu}^{\ast}}|v_{j}(y)|^{2_{\mu}^{\ast}}}
{|x-y|^{\mu}}dxdy\rightarrow b.
$$
Since $
J_{\lambda,\beta}(u_{\infty})\geq0
$ and \eqref{B13}, we obtain,
\begin{equation}\label{B14}
c\geq \frac{N+2-\mu}{4N-2\mu}b.
\end{equation}
By Lemma \ref{EMB} one knows that as $j\rightarrow\infty$,
$
\displaystyle\int_{F}|v_{j}|^{2}dx\rightarrow0,
$ where $F=\{x\in\mathbb{R}^N:V(x)\leq M_{0}\}$. Let $F^{c}=\mathbb{R}^N\backslash F$. Then, from the definition of $S_{H,L}$ and \eqref{B1}, we have
$$\aligned
S_{H,L}\Big(\int_{\mathbb{R}^N}&\int_{\mathbb{R}^N}
\frac{|v_{j}(x)|^{2_{\mu}^{\ast}}|v_{j}(y)|^{2_{\mu}^{\ast}}}
{|x-y|^{\mu}}dxdy\Big)^{\frac{N-2}{2N-\mu}}\\
&\leq\int_{\mathbb{R}^N}|\nabla v_{j}|^{2}dx\\
&\leq\int_{\mathbb{R}^N}|\nabla v_{j}|^{2}dx+\int_{F^{c}} (\lambda V(x)-\beta)|v_{j}|^{2}dx\\
&\leq\int_{\mathbb{R}^{N}}(|\nabla v_{j}|^{2}+(\lambda V(x)-\beta)|v_{j}|^{2})dx+\beta\int_{F}|v_{j}|^{2}dx\\
&=\int_{\mathbb{R}^{N}}(|\nabla v_{j}|^{2}+(\lambda V(x)-\beta)|v_{j}|^{2})dx+o_{j}(1),
\endaligned
$$
passing to the limit, it yields that $b\geq S_{H,L}b^{\frac{N-2}{2N-\mu}}$. Then we have either $b=0$ or $b\geq S_{H,L}^{\frac{2N-\mu}{N-\mu+2}}$. If $b=0$, the proof is complete. Otherwise $b\geq S_{H,L}^{\frac{2N-\mu}{N-\mu+2}}$, then we can obtain from \eqref{B14},
$$
\frac{N+2-\mu}{4N-2\mu}S_{H,L}^{\frac{2N-\mu}{N-\mu+2}}\leq\frac{N+2-\mu}{4N-2\mu}b\leq c,
$$
which contradicts with the fact that $c<\frac{N+2-\mu}{4N-2\mu}S_{H,L}^{\frac{2N-\mu}{N+2-\mu}}$. Thus $b=0$, and
$$
\|u_{j}-u_{\infty}\|_{L_{\lambda,\beta}}\rightarrow0
$$
as $j\rightarrow+\infty$. This ends the proof of Proposition \ref{PS}.
\end{proof}

\section{Proof of Theorem \ref{EXS}}

It is convenient to show that the functional $J_{\lambda,\beta}$ satisfies the Mountain-Pass geometry.

\begin{lem}\label{MPE} For any $0<\beta<\beta_{1}$, $\lambda>0$ large enough, the functional $J_{\lambda,\beta}$ satisfies the following conditions.\\
(i) There exist $\alpha,\rho>0$ such that $J_{\lambda,\beta}(u)\geq\alpha$ for $\|u\|_{L_{\lambda,\beta}}=\rho$. \\
(ii) There exists a $w_{1}\in E$ with $\|w_{1}\|_{L_{\lambda,\beta}}>\rho$ such that $J_{\lambda,\beta}(w_{1})<0$.
\end{lem}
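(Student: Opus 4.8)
The plan is to pass to the inner product $(\cdot,\cdot)$ induced by $L_{\lambda,\beta}$, in which the functional takes the clean form
$$
J_{\lambda,\beta}(u)=\tfrac12\|u\|_{L_{\lambda,\beta}}^{2}-\tfrac{1}{2\cdot2_{\mu}^{\ast}}D(u),\qquad D(u):=\int_{\R^N}\int_{\R^N}\frac{|u(x)|^{2_{\mu}^{\ast}}|u(y)|^{2_{\mu}^{\ast}}}{|x-y|^{\mu}}\,dx\,dy,
$$
displaying the competition between a quadratic term and a term of homogeneity $2\cdot2_{\mu}^{\ast}$. Since $2_{\mu}^{\ast}=\frac{2N-\mu}{N-2}>1$ (because $\mu<N$), we have $2\cdot2_{\mu}^{\ast}>2$, and both conclusions will follow from this super-quadratic growth together with a single a priori estimate controlling $D(u)$ by $\|u\|_{L_{\lambda,\beta}}$.

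For that estimate I would combine the definition of $S_{H,L}$ in \eqref{S1}, which gives $D(u)\le S_{H,L}^{-\frac{2N-\mu}{N-2}}\big(\int_{\R^N}|\nabla u|^{2}\big)^{\frac{2N-\mu}{N-2}}$, with the norm comparisons available once $\lambda\ge\lambda_{\beta}$: since $V\ge0$ and $\lambda>0$ one has $\int_{\R^N}|\nabla u|^{2}\le\|u\|_{\lambda}^{2}$, and Lemma \ref{PR2} yields $\|u\|_{\lambda}^{2}\le C_{\beta}^{-1}\|u\|_{L_{\lambda,\beta}}^{2}$. Chaining these inequalities produces a constant $C>0$, independent of $\lambda\ge\lambda_{\beta}$, with
$$
D(u)\le C\,\|u\|_{L_{\lambda,\beta}}^{2\cdot2_{\mu}^{\ast}}.
$$

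To prove (i), I substitute this bound to obtain
$$
J_{\lambda,\beta}(u)\ge\tfrac12\|u\|_{L_{\lambda,\beta}}^{2}-\tfrac{C}{2\cdot2_{\mu}^{\ast}}\|u\|_{L_{\lambda,\beta}}^{2\cdot2_{\mu}^{\ast}}=\|u\|_{L_{\lambda,\beta}}^{2}\Big(\tfrac12-\tfrac{C}{2\cdot2_{\mu}^{\ast}}\|u\|_{L_{\lambda,\beta}}^{2\cdot2_{\mu}^{\ast}-2}\Big);
$$
since $2\cdot2_{\mu}^{\ast}-2>0$, the bracketed factor stays above $\tfrac14$ once $\rho:=\|u\|_{L_{\lambda,\beta}}$ is small enough, giving $J_{\lambda,\beta}(u)\ge\alpha:=\tfrac14\rho^{2}>0$ on the sphere $\|u\|_{L_{\lambda,\beta}}=\rho$. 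To prove (ii), I fix any $u_{0}\in E\setminus\{0\}$ (so $D(u_{0})>0$) and evaluate along the ray $t\mapsto tu_{0}$, where $J_{\lambda,\beta}(tu_{0})=\tfrac{t^{2}}{2}\|u_{0}\|_{L_{\lambda,\beta}}^{2}-\tfrac{t^{2\cdot2_{\mu}^{\ast}}}{2\cdot2_{\mu}^{\ast}}D(u_{0})\to-\infty$ as $t\to+\infty$; hence for $t$ large, $w_{1}:=tu_{0}$ satisfies $\|w_{1}\|_{L_{\lambda,\beta}}>\rho$ and $J_{\lambda,\beta}(w_{1})<0$.

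There is no genuine obstacle here: this is the standard mountain-pass geometry verification, and the only points requiring care are bookkeeping ones, namely ensuring $\lambda\ge\lambda_{\beta}$ so that $\|\cdot\|_{L_{\lambda,\beta}}$ is a genuine norm equivalent to $\|\cdot\|_{\lambda}$ (this is precisely the role of ``$\lambda$ large enough'' in the statement), and checking that the constant $C$ in the estimate for $D$ can be taken uniform in $\lambda$, so that $\alpha$ and $\rho$ do not degenerate as $\lambda\to\infty$. The super-quadratic homogeneity $2\cdot2_{\mu}^{\ast}>2$ of the Choquard nonlinearity is what drives both halves, and it is the only structural input beyond the Hardy--Littlewood--Sobolev inequality.
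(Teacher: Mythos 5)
Your proposal is correct and follows essentially the same route as the paper: part (i) bounds the nonlocal term by a power of $\|u\|_{L_{\lambda,\beta}}$ (you via the definition of $S_{H,L}$ plus Lemma \ref{PR2}, the paper via Hardy--Littlewood--Sobolev and Sobolev embedding, which is the same estimate), and part (ii) is the identical ray argument using the superquadratic homogeneity $2\cdot2_{\mu}^{\ast}>2$. Your added remark that the constants can be taken uniform in $\lambda\ge\lambda_{\beta}$ is a correct and slightly more careful observation than the paper records.
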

\begin{proof} (i) By $0<\beta<\beta_{1}$, the Sobolev embedding and Hardy-Littlewood-Sobolev inequality, for all $u\in E\backslash\ \{0\}$ we have
$$
\aligned
J_{\lambda,\beta}(u)&\geq\frac{1}{2}\int_{\mathbb{R}^{N}}(|\nabla u|^{2}+(\lambda V(x)-\beta)|u|^{2})dx-\frac{1}{2\cdot2_{\mu}^{\ast}}C_{1}|u|_{2^{\ast}}^{2\cdot2_{\mu}^{\ast}}\\
&\geq C_{2}\|u\|_{L_{\lambda,\beta}}^{2}-C_{3}\|u\|_{L_{\lambda,\beta}}^{2\cdot2_{\mu}^{\ast}}.\\
\endaligned
$$
Since $2<2\cdot2_{\mu}^{\ast}$, we can choose some $\alpha,\rho>0$ such that $J_{\lambda,\beta}(u)\geq\alpha$ for $\|u\|_{L_{\lambda,\beta}}=\rho$.

(ii) For any $u_{1}\in E\backslash\ \{0\}$, we have
$$
J_{\lambda,\beta}(tu_{1})=\frac{t^{2}}{2}\int_{\mathbb{R}^{N}}(|\nabla u_{1}|^{2}+(\lambda V(x)-\beta)u_{1}^{2})dx-\frac{t^{2\cdot2_{\mu}^{\ast}}}{2\cdot2_{\mu}^{\ast}}
\int_{\mathbb{R}^N}\int_{\mathbb{R}^N}
\frac{|u_{1}(x)|^{2_{\mu}^{\ast}}|u_{1}(y)|^{2_{\mu}^{\ast}}}
{|x-y|^{\mu}}dxdy<0
$$
for $t>0$ large enough. Hence, we can take a $w_{1}:=t_{1}u_{1}$ for some $t_{1}>0$ and (ii) follows.
\end{proof}
Applying the mountain pass theorem without $(PS)$ condition (cf. \cite{Wi}), there
exists a $(PS)$ sequence $\{u_{n}\}$ such that $J_{\lambda,\beta}(u_{n})\rightarrow c$ and $ J_{\lambda,\beta}'(u_{n})\rightarrow0$ in $E^{-1}$ at the minimax level
$$
c_{\lambda,\beta}=\inf\limits_{\gamma\in\Gamma}\max\limits_{t\in[0,1]}J_{\lambda,\beta}(\gamma(t))>0,
$$
where
$$
\Gamma:=\{\gamma\in C([0,1],E):\gamma(0)=0,J_{\lambda,\beta}(\gamma(1))<0\}.
$$
If we denote the Nehari manifold of $J_{\lambda,\beta}$ by
$$
{\cal M}_{\lambda,\beta}=\{u\in E\backslash\{{0}\}:\langle J_{\lambda,\beta}'(u),u\rangle=0\},
$$
since $0<\beta<\beta_{1}$ and $2<2\cdot2_{\mu}^{\ast}$, the function $t\in\mathbb{R_{+}}\rightarrow J_{\lambda,\beta}(tu)$ has an unique maximum point $t(u)>0$ and $t(u)u\in {\cal M}_{\lambda,\beta}$. Then $c_{\lambda,\beta}$ has an equivalent minimax characterization, that is
\begin{equation}\label{C1}
c_{\lambda,\beta}:=\inf\limits_{u\in {\cal M}_{\lambda,\beta}}J_{\lambda,\beta}(u)=\inf\limits_{u\in E,u\neq0}\max\limits_{t\geq0}J_{\lambda,\beta}(tu).
\end{equation}

Next we denote by $J_{\beta,\Omega}$ the restriction of $J_{\lambda,\beta}$ on $H_{0}^{1}(\Omega)$, that is
$$
J_{\beta,\Omega}(u)=\frac{1}{2}\int_{\Omega}|\nabla u|^{2}dx-\frac{\beta}{2}\int_{\Omega} |u|^{2}dx-\frac{1}{2\cdot2_{\mu}^{\ast}}\int_{\Omega}
\int_{\Omega}\frac{|u(x)|^{2_{\mu}^{\ast}}|u(y)|^{2_{\mu}^{\ast}}}
{|x-y|^{\mu}}dxdy,
$$
where $\Omega$ is defined as in $(V_{1})$. The Nehari manifold of $J_{\beta,\Omega}$ is
$$
{\cal M}_{\beta,\Omega}=\{u\in H_{0}^{1}(\Omega)\backslash\{{0}\}:\langle J_{\beta,\Omega}'(u),u\rangle=0\}.
$$
Set
$$
c(\beta,\Omega):=\inf\limits_{u\in {\cal M}_{\beta,\Omega}}J_{\beta,\Omega}(u).
$$
Analogously, we have
\begin{equation}\label{C3}
c(\beta,\Omega)=\inf\limits_{u\in H_{0}^{1}(\Omega)),u\neq0}\max\limits_{t\geq0}J_{\beta,\Omega}(tu)=\inf\limits_{\gamma\in\Gamma}\max\limits_{t\in[0,1]}J_{\beta,\Omega}(\gamma(t)),
\end{equation}
where
$$
\Gamma:=\{\gamma\in C([0,1],H_{0}^{1}(\Omega)):\gamma(0)=0,J_{\beta,\Omega}(\gamma(1))<0\}.
$$

The following Lemma will plays an important role in estimating the Mountain pass levels. By the proof of Theorem 1.4 (i) in \cite{GY}, we have
\begin{lem}\label{MPE3} Let $\beta>0$, $\beta\neq\beta_{j}$ for any $j\geq1$. There exists $e\in H_{0}^{1}(\Omega)\backslash\{{0}\}$ such that
\begin{equation}\label{C4}
\sup_{t\geq0}J_{\beta,\Omega}(te)<\frac{N+2-\mu}{4N-2\mu}S_{H,L}^{\frac{2N-\mu}{N+2-\mu}}.
\end{equation}
\end{lem}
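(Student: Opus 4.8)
The plan is to exhibit $e$ as a truncated, rescaled extremal function for $S_{H,L}$ concentrated at a point of $\Omega$, and to use the sign of the lower-order term $-\tfrac{\beta}{2}\int_\Omega|u|^2$ to push the fibering maximum strictly below the critical threshold. Since $0$ lies in the interior of $\Omega$ by $(V_1)$, fix $\delta>0$ with $B_{2\delta}(0)\subset\Omega$ and a cutoff $\varphi\in C_c^\infty(B_{2\delta}(0))$ with $\varphi\equiv1$ on $B_\delta(0)$, $0\le\varphi\le1$. For $\varepsilon>0$ put $U_\varepsilon(x)=\varepsilon^{-(N-2)/2}\tilde U(x/\varepsilon)$ and $u_\varepsilon=\varphi U_\varepsilon\in H_0^1(\Omega)\setminus\{0\}$, where $\tilde U$ is the minimizer for $S_{H,L}$ normalized as in \eqref{REL}. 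Writing $S_0:=S_{H,L}^{(2N-\mu)/(N-\mu+2)}$, the first step is to record the Brezis--Nirenberg-type expansions, as $\varepsilon\to0$, of
$$
A_\varepsilon=\int_\Omega|\nabla u_\varepsilon|^2\,dx,\quad B_\varepsilon=\int_\Omega|u_\varepsilon|^2\,dx,\quad D_\varepsilon=\int_\Omega\int_\Omega\frac{|u_\varepsilon(x)|^{2_{\mu}^{\ast}}|u_\varepsilon(y)|^{2_{\mu}^{\ast}}}{|x-y|^\mu}\,dx\,dy.
$$
These are precisely the estimates performed in the proof of Theorem 1.4(i) of \cite{GY}: one has $A_\varepsilon=S_0+O(\varepsilon^{N-2})$ and $D_\varepsilon=S_0+o(\eta_\varepsilon)$, while the $L^2$-mass obeys $B_\varepsilon=d_N\,\eta_\varepsilon\,(1+o(1))$ with $d_N>0$, where $\eta_\varepsilon=\varepsilon^2$ for $N\ge5$ and $\eta_\varepsilon=\varepsilon^2|\log\varepsilon|$ for $N=4$. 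Note that $O(\varepsilon^{N-2})=o(\eta_\varepsilon)$ in both ranges, since $N-2\ge3>2$ when $N\ge5$ and $\varepsilon^2=o(\varepsilon^2|\log\varepsilon|)$ when $N=4$.

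The second step is the explicit maximization in $t$. Because $2<2\cdot2_{\mu}^{\ast}$ and, for $\varepsilon$ small, $A_\varepsilon-\beta B_\varepsilon>0$ (here $B_\varepsilon\to0$ while $A_\varepsilon\to S_0>0$, so this holds for \emph{any} fixed $\beta>0$), the map $t\mapsto J_{\beta,\Omega}(tu_\varepsilon)=\tfrac{t^2}{2}(A_\varepsilon-\beta B_\varepsilon)-\tfrac{t^{2\cdot2_{\mu}^{\ast}}}{2\cdot2_{\mu}^{\ast}}D_\varepsilon$ has a unique maximizer $t_\varepsilon>0$, and a direct computation at $t_\varepsilon^{2\cdot2_{\mu}^{\ast}-2}=(A_\varepsilon-\beta B_\varepsilon)/D_\varepsilon$ gives
$$
\sup_{t\ge0}J_{\beta,\Omega}(tu_\varepsilon)=\frac{N+2-\mu}{4N-2\mu}\,\frac{(A_\varepsilon-\beta B_\varepsilon)^{\frac{2N-\mu}{N+2-\mu}}}{D_\varepsilon^{\frac{N-2}{N+2-\mu}}}.
$$
Substituting the expansions and using the identity $\tfrac{2N-\mu}{N+2-\mu}-\tfrac{N-2}{N+2-\mu}=1$, the right-hand side becomes $\tfrac{N+2-\mu}{4N-2\mu}S_{H,L}^{(2N-\mu)/(N+2-\mu)}\big(1-c\,\beta\,\eta_\varepsilon+o(\eta_\varepsilon)\big)$ with $c>0$. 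Since the error contributions of $A_\varepsilon$ and $D_\varepsilon$ are $o(\eta_\varepsilon)$ while the genuinely negative term $-\beta B_\varepsilon$ contributes at the exact order $\eta_\varepsilon$ (because $d_N>0$ and $\beta>0$), the bracket is $<1$ for all $\varepsilon$ small. Fixing such an $\varepsilon$ and setting $e:=u_\varepsilon$ yields \eqref{C4}.

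The main obstacle is the input to the maximization, namely the sharp asymptotics of the nonlocal term $D_\varepsilon$: one must expand the double Riesz integral of the truncated bubble and confirm that the truncation error is genuinely of higher order than $\eta_\varepsilon$, so that the decisive negative contribution indeed comes from $-\beta B_\varepsilon$. The tightest case is $N=4$, where the gain $\varepsilon^2|\log\varepsilon|$ beats the error $\varepsilon^2=\varepsilon^{N-2}$ only through the logarithm; this explains why the effective hypothesis is merely $\beta>0$ (the further restriction $\beta\neq\beta_j$ is not used in this estimate and is inherited from the surrounding results). All of these expansions are established in \cite{GY}, from which the claim follows.
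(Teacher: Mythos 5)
Your proposal is correct and is essentially the paper's own argument: the paper proves Lemma \ref{MPE3} by simply invoking the proof of Theorem 1.4(i) in \cite{GY}, which is exactly the truncated-bubble, Brezis--Nirenberg-type fibering computation you reconstruct (and which the paper itself redisplays as estimates \eqref{E56}--\eqref{E58} in Section 4). Your one deferred point --- that the nonlocal term $D_\varepsilon$ admits a matching \emph{lower} bound with error $o(\eta_\varepsilon)$, not just the upper bound \eqref{E57} --- is precisely what \cite{GY} supplies, so the proof is complete modulo that citation, just as in the paper.
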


\begin{Prop}\label{MPE4} Let $\beta>0$, $\beta\neq\beta_{j}$ for any $j\geq1$. If $\lambda\geq\lambda_{\beta}$ then
\begin{equation}\label{C7}
0< c_{\lambda,\beta}\leq c(\beta,\Omega)<\frac{N+2-\mu}{4N-2\mu}S_{H,L}^{\frac{2N-\mu}{N+2-\mu}}.
\end{equation}
\end{Prop}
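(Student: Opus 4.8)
The plan is to split the chain of inequalities \eqref{C7} into its three independent pieces and dispatch each one with a tool already in hand: the mountain-pass geometry of Lemma \ref{MPE} for the strict lower bound $0<c_{\lambda,\beta}$, a restriction argument for the comparison $c_{\lambda,\beta}\leq c(\beta,\Omega)$, and Lemma \ref{MPE3} for the strict upper bound.

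For the lower bound I would invoke the standard mountain-pass separation. By the estimate established in Lemma \ref{MPE}(i), $J_{\lambda,\beta}(u)\geq C_2\|u\|_{L_{\lambda,\beta}}^{2}-C_3\|u\|_{L_{\lambda,\beta}}^{2\cdot2_{\mu}^{\ast}}$ stays nonnegative on the closed ball $\{\|u\|_{L_{\lambda,\beta}}\leq\rho\}$ and is bounded below by $\alpha$ on its boundary sphere. Hence any $\gamma\in\Gamma$, which joins $\gamma(0)=0$ to an endpoint with $J_{\lambda,\beta}(\gamma(1))<0$, must cross the sphere $\|u\|_{L_{\lambda,\beta}}=\rho$ at some parameter $t_0$; therefore $\max_{t\in[0,1]}J_{\lambda,\beta}(\gamma(t))\geq\alpha$, and taking the infimum over $\gamma$ gives $c_{\lambda,\beta}\geq\alpha>0$.

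For the middle inequality the crucial observation is that $V\equiv0$ on $\Omega$, which is exactly the content of $\overline{\Omega}=V^{-1}(0)$ in $(V_1)$. Consequently, for any $u\in H_0^1(\Omega)$ extended by zero to $\mathbb{R}^N$ one has $\lambda\int_{\mathbb{R}^N}Vu^{2}\,dx=0$, and since $u$ vanishes outside $\Omega$ the nonlocal terms also coincide, so that $J_{\lambda,\beta}(u)=J_{\beta,\Omega}(u)$ for all such $u$. Because $H_0^1(\Omega)\subset E$, restricting the infimum in the Nehari characterization \eqref{C1} to the smaller class $H_0^1(\Omega)\setminus\{0\}$ can only increase it, whence
$$
c_{\lambda,\beta}=\inf_{u\in E\setminus\{0\}}\max_{t\geq0}J_{\lambda,\beta}(tu)\leq\inf_{u\in H_0^1(\Omega)\setminus\{0\}}\max_{t\geq0}J_{\beta,\Omega}(tu)=c(\beta,\Omega),
$$
the last equality being \eqref{C3}.

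Finally, the strict upper bound drops out of Lemma \ref{MPE3}: taking the function $e\in H_0^1(\Omega)\setminus\{0\}$ produced there and using \eqref{C3} once more,
$$
c(\beta,\Omega)\leq\max_{t\geq0}J_{\beta,\Omega}(te)<\frac{N+2-\mu}{4N-2\mu}S_{H,L}^{\frac{2N-\mu}{N+2-\mu}}.
$$
None of these three steps is technically deep once Lemmas \ref{MPE} and \ref{MPE3} are available; the one point requiring genuine care is the identity $J_{\lambda,\beta}|_{H_0^1(\Omega)}=J_{\beta,\Omega}$, where the hypothesis $\overline{\Omega}=V^{-1}(0)$ makes the penalization term vanish and so links equation \eqref{CCE} to the limiting problem on $\Omega$. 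The real analytic content is hidden inside Lemma \ref{MPE3}, imported from \cite{GY}, which is where a concrete test family must be estimated to beat the critical threshold.
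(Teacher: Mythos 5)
Your proposal is correct and follows essentially the same route as the paper: positivity of $c_{\lambda,\beta}$ from the mountain-pass geometry of Lemma \ref{MPE}, the comparison $c_{\lambda,\beta}\leq c(\beta,\Omega)$ from the inclusion $H_{0}^{1}(\Omega)\subset E$ together with $\langle L_{\lambda,\beta}u,u\rangle=\langle L_{0,\beta}u,u\rangle$ on $H_{0}^{1}(\Omega)$ (the paper phrases this via normalized Nehari-type sets, you via the $\max_{t\geq 0}$ characterization, which is the same argument), and the strict upper bound from Lemma \ref{MPE3} with \eqref{C3}. No gaps.
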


\begin{proof} Lemma \ref{MPE} implies $c_{\lambda,\beta}>0$. Since
$$
\left\{u\in H_{0}^{1}(\Omega):\displaystyle\int_{\mathbb{R}^N}\int_{\mathbb{R}^N}
\frac{|u(x)|^{2_{\mu}^{\ast}}|u(y)|^{2_{\mu}^{\ast}}}
{|x-y|^{\mu}}dxdy=1\right\}\subset \left\{u\in E:\displaystyle\int_{\mathbb{R}^N}\int_{\mathbb{R}^N}
\frac{|u(x)|^{2_{\mu}^{\ast}}|u(y)|^{2_{\mu}^{\ast}}}
{|x-y|^{\mu}}dxdy=1\right\}
$$
and $\langle L_{\lambda,\beta}u,u\rangle=\langle L_{0,\beta}u,u\rangle$ for
$$
u\in \left\{u\in H_{0}^{1}(\Omega):\displaystyle\int_{\mathbb{R}^N}\int_{\mathbb{R}^N}
\frac{|u(x)|^{2_{\mu}^{\ast}}|u(y)|^{2_{\mu}^{\ast}}}
{|x-y|^{\mu}}dxdy=1\right\},$$
it follows that $c_{\lambda,\beta}\leq c(\beta,\Omega)$. By Lemma \ref{MPE3} and \eqref{C3}, we know $c(\beta,\Omega)<\frac{N+2-\mu}{4N-2\mu}S_{H,L}^{\frac{2N-\mu}{N+2-\mu}}$.
Hence, the conclusion is proved.
\end{proof}

\noindent
{\bf Proof of Theorem \ref{EXS}.}
Applying the Mountain-Pass theorem without $(PS)$ condition, we know there exists a $(PS)_{c_{\lambda,\beta}}$ sequence $\{u_{n}\}$. Then we obtain from Proposition \ref{PS} and Proposition \ref{MPE4}, \eqref{CCE} has at least one ground state solution $u$.

In the following, we come to give the asymptotic behavior of the solutions of \eqref{CCE} as $\lambda$ goes to infinity.
For $0<\beta<\beta_{1}$, let $\{u_{n}\}$ be a sequence of solutions of \eqref{CCE} such that $\lambda_{n}\rightarrow\infty$ and $J_{\lambda_{n},\beta}(u_{n})\rightarrow c<\frac{N+2-\mu}{4N-2\mu}S_{H,L}^{\frac{2N-\mu}{N+2-\mu}}$, we have
$$\aligned
J_{\lambda_{n},\beta}(u_{n})
&=\frac{N+2-\mu}{4N-2\mu}\int_{\mathbb{R}^N}\int_{\mathbb{R}^N}
\frac{|u_{n}(x)|^{2_{\mu}^{\ast}}|u_{n}(y)|^{2_{\mu}^{\ast}}}
{|x-y|^{\mu}}dxdy,\\
\endaligned
$$
and so,
\begin{equation}\label{D1}
\lim\limits_{n\rightarrow\infty}\int_{\mathbb{R}^N}\int_{\mathbb{R}^N}
\frac{|u_{n}(x)|^{2_{\mu}^{\ast}}|u_{n}(y)|^{2_{\mu}^{\ast}}}
{|x-y|^{\mu}}dxdy<S_{H,L}^{\frac{2N-\mu}{N+2-\mu}}.
\end{equation}
By Lemma \ref{PR2}, we can deduce that
$$
\frac{N+2-\mu}{4N-2\mu}S_{H,L}^{\frac{2N-\mu}{N+2-\mu}}>J_{\lambda_{n},\beta}(u_{n})=\frac{N+2-\mu}{4N-2\mu}\langle L_{\lambda_{n},\beta}u_{n},u_{n}\rangle\geq \frac{N+2-\mu}{4N-2\mu}C_{\beta}\|u_{n}\|_{L_{\lambda_{n},\beta}}^{2},
$$
and so $\{u_{n}\}$ is bounded in $E$.

By Lemma \ref{EMB}, there is a $u\in H_{0}^{1}(\Omega)$ such that, up to a subsequence, $u_{n}\rightharpoonup u$ in $E$ and $u_{n}\rightarrow u$ in $L^{2}(\mathbb{R}^N)$. From the fact that $u_{n}$ is a solution of \eqref{CCE}, we have
$$
\int_{\mathbb{R}^{N}}(\nabla u_{n}\nabla\varphi+(\lambda_{n} V-\beta)u_{n}\varphi )dx
=\int_{\mathbb{R}^N}\int_{\mathbb{R}^N}
\frac{|u_{n}(x)|^{2_{\mu}^{\ast}}|u_{n}(y)|^{2_{\mu}^{\ast}-2}u_{n}(y)\varphi(y)}
{|x-y|^{\mu}}dxdy
$$
for any $\varphi\in E$. If $\varphi\in H_{0}^{1}(\Omega)$ then $\lambda_{n}\displaystyle\int_{\mathbb{R}^N} Vu_{n}\varphi dx=0$ for all $n$. Letting $n\rightarrow\infty$ we obtain
$$
\int_{\mathbb{R}^{N}}(\nabla u\nabla\varphi
-\beta u\varphi) dx=\int_{\mathbb{R}^N}\int_{\mathbb{R}^N}
\frac{|u(x)|^{2_{\mu}^{\ast}}|u(y)|^{2_{\mu}^{\ast}-2}u(y)\varphi(y)}
{|x-y|^{\mu}}dxdy
$$
for any $\varphi\in H_{0}^{1}(\Omega)$. So, $u$ is a solution of \eqref{LCCE}. Define $v_{n}:=u_{n}-u$, then $v_{n}\rightarrow0$ in $L^{2}(\mathbb{R}^N)$ and $v_{n}\rightarrow 0$ a.e. in $\mathbb{R}^N$ as $n\rightarrow+\infty$.

Since $V(x)=0$ for $x\in\Omega$, we get
\begin{equation}\label{D2}
\langle L_{\lambda_{n},\beta}u_{n},u_{n}\rangle=\langle L_{0,\beta} u,u\rangle+\langle L_{\lambda_{n},\beta}v_{n},v_{n}\rangle.
\end{equation}
Since $\{u_{n}\}$ is a sequence of solutions of \eqref{CCE} and $u$ is a solution of \eqref{LCCE}, by the Br\'{e}zis-Lieb type splitting result for nonlocal term in \cite{GY} that
$$
\int_{\mathbb{R}^N}\int_{\mathbb{R}^N}
\frac{|u_{n}(x)|^{2_{\mu}^{\ast}}|u_{n}(y)|^{2_{\mu}^{\ast}}}
{|x-y|^{\mu}}dxdy=\int_{\mathbb{R}^N}\int_{\mathbb{R}^N}
\frac{|u(x)|^{2_{\mu}^{\ast}}|u(y)|^{2_{\mu}^{\ast}}}
{|x-y|^{\mu}}dxdy+\int_{\mathbb{R}^N}\int_{\mathbb{R}^N}
\frac{|v_{n}(x)|^{2_{\mu}^{\ast}}|v_{n}(y)|^{2_{\mu}^{\ast}}}
{|x-y|^{\mu}}dxdy+o_{n}(1),
$$
we can get
\begin{equation}\label{D3}
\langle L_{\lambda_{n},\beta}v_{n},v_{n}\rangle-\int_{\mathbb{R}^N}\int_{\mathbb{R}^N}
\frac{|v_{n}(x)|^{2_{\mu}^{\ast}}|v_{n}(y)|^{2_{\mu}^{\ast}}}
{|x-y|^{\mu}}dxdy=o_{n}(1).
\end{equation}
We claim that $$\displaystyle\int_{\mathbb{R}^N}\int_{\mathbb{R}^N}
\frac{|v_{n}(x)|^{2_{\mu}^{\ast}}|v_{n}(y)|^{2_{\mu}^{\ast}}}
{|x-y|^{\mu}}dxdy\rightarrow0.$$
Assume by contrary that
$$\displaystyle\int_{\mathbb{R}^N}\int_{\mathbb{R}^N}
\frac{|v_{n}(x)|^{2_{\mu}^{\ast}}|v_{n}(y)|^{2_{\mu}^{\ast}}}
{|x-y|^{\mu}}dxdy\rightarrow b>0.$$
Then,
$$\aligned
S_{H,L}&\left(\int_{\mathbb{R}^N}\int_{\mathbb{R}^N}
\frac{|v_{n}(x)|^{2_{\mu}^{\ast}}|v_{n}(y)|^{2_{\mu}^{\ast}}}
{|x-y|^{\mu}}dxdy\right)^{\frac{N-2}{2N-\mu}}\\
&\leq\int_{\mathbb{R}^N}|\nabla v_{n}|^{2}dx\\
&\leq\int_{\mathbb{R}^N}|\nabla v_{n}|^{2}dx+\int_{F^{c}} \lambda_{n} V(x)|v_{n}|^{2}dx-\beta\int_{F^{c}} |v_{n}|^{2}dx\\
&=\int_{\mathbb{R}^{N}}(|\nabla v_{n}|^{2}dx+\lambda_{n} V|v_{n}|^{2}-\beta|v_{n}|^{2})dx+\beta\int_{F}|v_{n}|^{2}dx\\
&=\int_{\mathbb{R}^N}\int_{\mathbb{R}^N}
\frac{|v_{n}(x)|^{2_{\mu}^{\ast}}|v_{n}(y)|^{2_{\mu}^{\ast}}}
{|x-y|^{\mu}}dxdy+o_{n}(1),\\
\endaligned
$$
thanks to \eqref{B1} and \eqref{D3}. It follows that
$$
S_{H,L}\leq\left(\int_{\mathbb{R}^N}\int_{\mathbb{R}^N}
\frac{|v_{n}(x)|^{2_{\mu}^{\ast}}|v_{n}(y)|^{2_{\mu}^{\ast}}}
{|x-y|^{\mu}}dxdy\right)^{\frac{N-\mu+2}{2N-\mu}}
+o_{n}(1)\leq\left(\int_{\mathbb{R}^N}\int_{\mathbb{R}^N}
\frac{|u_{n}(x)|^{2_{\mu}^{\ast}}|u_{n}(y)|^{2_{\mu}^{\ast}}}
{|x-y|^{\mu}}dxdy\right)^{\frac{N-\mu+2}{2N-\mu}}+o_{n}(1)
$$
and so, by \eqref{D1},
$$
S_{H,L}^{\frac{2N-\mu}{N-\mu+2}}\leq\lim\limits_{n\rightarrow\infty}\int_{\mathbb{R}^N}\int_{\mathbb{R}^N}
\frac{|u_{n}(x)|^{2_{\mu}^{\ast}}|u_{n}(y)|^{2_{\mu}^{\ast}}}
{|x-y|^{\mu}}dxdy<S_{H,L}^{\frac{2N-\mu}{N-\mu+2}}.
$$
This is a contradiction and consequently $$\displaystyle\int_{\mathbb{R}^N}\int_{\mathbb{R}^N}
\frac{|v_{n}(x)|^{2_{\mu}^{\ast}}|v_{n}(y)|^{2_{\mu}^{\ast}}}
{|x-y|^{\mu}}dxdy\rightarrow0.$$ From \eqref{D3} we get
\begin{equation}\label{D41}
\langle L_{\lambda_{n},\beta}v_{n},v_{n}\rangle\rightarrow0.
\end{equation}
Hence, by \eqref{D2}
\begin{equation}\label{D4}
\lim\limits_{n\rightarrow\infty}\langle L_{\lambda_{n},\beta}u_{n},u_{n}\rangle=\langle L_{0,\beta}u,u\rangle.
\end{equation}
Recall that $\displaystyle\int_{\mathbb{R}^N}|\nabla v_{n}|^{2}dx\geq\beta\displaystyle\int_{\mathbb{R}^N}| v_{n}|^{2}dx$, we know
$$\aligned
\int_{\mathbb{R}^N}V|u_{n}|^{2}dx&\leq\int_{\mathbb{R}^N}\lambda_{n}V|u_{n}|^{2}dx\\
&=\int_{\mathbb{R}^N}\lambda_{n}V|v_{n}|^{2}dx\\
&\leq\langle L_{\lambda_{n},\beta}v_{n},v_{n}\rangle,\\
\endaligned
$$
since $u_{n}=v_{n}$ in $\mathbb{R}^N\backslash\Omega$ and $V=0$ for $x\in\Omega$.
Combining this with \eqref{D41}, we know $\displaystyle\int_{\mathbb{R}^N}V|u_{n}|^{2}dx\rightarrow0$  and obtain from \eqref{D4} that $u_{n}\rightarrow u$ in $E$.
$\hfill{} \Box$

\begin{Rem} From Theorem \ref{EXS}, we know that for every $0<\beta<\beta_{1}$ there exists $\lambda_{\beta}>0$ such that, for each $\lambda\geq\lambda_{\beta}$, equation \eqref{CCE} has at least one ground state solution $u$. Let $\lambda_{n}\geq\lambda_{\beta}$ and $\lambda_{n}\rightarrow\infty$, we denote $\{u_{n}\}$ be a sequence of ground state solutions of \eqref{CCE} with $\lambda=\lambda_{n}$. By Proposition \ref{MPE4}, we have $J_{\lambda_{n},\beta}(u_{n})\leq c(\beta,\Omega)<\frac{N+2-\mu}{4N-2\mu}S_{H,L}^{\frac{2N-\mu}{N+2-\mu}}$. It is easy to see that $\{u_{n}\}$ is bounded in $E$, $\lambda_{n}\rightarrow\infty$ and $J_{\lambda_{n},\beta}(u_{n})\rightarrow c<\frac{N+2-\mu}{4N-2\mu}S_{H,L}^{\frac{2N-\mu}{N+2-\mu}}$.
\end{Rem}

\begin{Rem}\label{LEX} From the main results in \cite{GY}, we can see that $c(\beta,\Omega)$ can also be achieved by a function, hereafter it will be denoted by $u_{\beta}$ on the Nehari manifold ${\cal M}_{\beta,\Omega}$. Moreover, we add a subscript $r$ to denote the same quantities when the domain $\Omega$
is replaced by $B_{r}\subset \Omega$. Then, $c(\beta,B_{r})$ can also be achieved by some $u_{\beta,B_{r}}\in {\cal M}_{\beta,B_{r}}$.
\end{Rem}

\section{Multiplicity of solutions for the case $0<\beta<\beta_{1}$}
We consider
\begin{equation}\label{CCCE}
\left\{\begin{array}{l}
\displaystyle-\Delta u
=\big(|x|^{-\mu}\ast |u|^{2_{\mu}^{\ast}}\big)|u|^{2_{\mu}^{\ast}-2}u\hspace{4.14mm}\mbox{in}\hspace{1.14mm} \mathbb{R}^N,\\
\displaystyle u\in D^{1,2}(\mathbb{R}^N).
\end{array}
\right.
\end{equation}
The functional associated to \eqref{CCCE} is
$$
J_{\ast}(u)=\frac{1}{2}\int_{\mathbb{R}^N}|\nabla u|^{2}dx-\frac{1}{2\cdot2_{\mu}^{\ast}}\int_{\mathbb{R}^N}\int_{\mathbb{R}^N}
\frac{|u(x)|^{2_{\mu}^{\ast}}|u(y)|^{2_{\mu}^{\ast}}}
{|x-y|^{\mu}}dxdy
$$
and its Nehari manifold is
$$
{\cal M}_{\ast}=\{u\in D^{1,2}(\mathbb{R}^N)\backslash\{{0}\}:\langle J_{\ast}'(u),u\rangle=0\}.
$$
Set
$$
c_{\ast}:=\inf\limits_{u\in {\cal M}_{\ast}}J_{\ast}(u).
$$
We can get
\begin{equation}\label{E2}
c_{\ast}=\inf\limits_{u\in D^{1,2}(\mathbb{R}^N)),u\neq0}\max\limits_{t\geq0}J_{\ast}(tu),\end{equation}
moreover, since $\tilde{U}(x)$ is the unique solution, we know
\begin{equation}\label{E5}
c_{\ast}=\frac{N+2-\mu}{4N-2\mu}S_{H,L}^{\frac{2N-\mu}{N+2-\mu}}.
\end{equation}

\begin{lem}\label{MS1}
Let $0<\beta<\beta_{1}$ and $t_{\beta}$ be the unique value such that $t_{\beta}u_{\beta}\in {\cal M}_{\ast}$. Then
$$
\lim_{\beta\rightarrow0}t_{\beta}=1.
$$
Where $u_{\beta}$ is defined in Remark \ref{LEX}.
\end{lem}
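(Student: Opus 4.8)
The plan is to compute $t_{\beta}$ in closed form from the two Nehari constraints and then control the single ratio that appears using the Poincar\'e inequality attached to $\beta_{1}$; no compactness or concentration analysis is needed.

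First I would record the two membership conditions. Writing
$A_{\beta}:=\int_{\Omega}|\nabla u_{\beta}|^{2}\,dx$, $B_{\beta}:=\int_{\Omega}|u_{\beta}|^{2}\,dx$ and
$D_{\beta}:=\int_{\Omega}\int_{\Omega}\frac{|u_{\beta}(x)|^{2_{\mu}^{\ast}}|u_{\beta}(y)|^{2_{\mu}^{\ast}}}{|x-y|^{\mu}}\,dx\,dy$,
the fact that $u_{\beta}\in{\cal M}_{\beta,\Omega}$, i.e. $\langle J_{\beta,\Omega}'(u_{\beta}),u_{\beta}\rangle=0$, reads $A_{\beta}-\beta B_{\beta}=D_{\beta}$. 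Extending $u_{\beta}$ by zero to all of $\mathbb{R}^N$ gives $u_{\beta}\in D^{1,2}(\mathbb{R}^N)$ with the same three integrals, since the double integral over $\mathbb{R}^N$ collapses to the one over $\Omega$ because $u_{\beta}$ vanishes outside $\Omega$. Then $t_{\beta}u_{\beta}\in{\cal M}_{\ast}$ means $\langle J_{\ast}'(t_{\beta}u_{\beta}),t_{\beta}u_{\beta}\rangle=t_{\beta}^{2}A_{\beta}-t_{\beta}^{2\cdot2_{\mu}^{\ast}}D_{\beta}=0$, so that the unique positive $t_{\beta}$ is determined by $t_{\beta}^{\,2\cdot2_{\mu}^{\ast}-2}=A_{\beta}/D_{\beta}$.

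Combining the two identities yields the closed form
\[
t_{\beta}^{\,2\cdot2_{\mu}^{\ast}-2}=\frac{A_{\beta}}{A_{\beta}-\beta B_{\beta}}=\frac{1}{1-\beta B_{\beta}/A_{\beta}}.
\]
At this point everything reduces to bounding $B_{\beta}/A_{\beta}$ uniformly in $\beta$. Here I would invoke the variational characterization of $\beta_{1}$, namely the Poincar\'e inequality $\int_{\Omega}|\nabla u|^{2}\,dx\geq\beta_{1}\int_{\Omega}|u|^{2}\,dx$ for all $u\in H_{0}^{1}(\Omega)$, applied to $u=u_{\beta}$; this gives $A_{\beta}\geq\beta_{1}B_{\beta}$, hence $0\leq B_{\beta}/A_{\beta}\leq1/\beta_{1}$. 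Consequently $0\leq \beta B_{\beta}/A_{\beta}\leq \beta/\beta_{1}$, which in particular keeps the denominator above strictly positive for $0<\beta<\beta_{1}$, so that $t_{\beta}$ is genuinely well defined and positive.

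Finally, letting $\beta\to0$, the estimate $\beta B_{\beta}/A_{\beta}\leq\beta/\beta_{1}\to0$ forces $t_{\beta}^{\,2\cdot2_{\mu}^{\ast}-2}\to1$; since $t_{\beta}>0$ and the fixed exponent satisfies $2\cdot2_{\mu}^{\ast}-2>0$, taking roots gives $t_{\beta}\to1$, as claimed. The argument is essentially algebraic, and the only points deserving care are the justification that the nonlocal energy of the zero-extension of $u_{\beta}$ over $\mathbb{R}^N$ coincides with $D_{\beta}$, and the use of the sharp constant $\beta_{1}$ to make the bound on $B_{\beta}/A_{\beta}$ uniform in $\beta$; neither presents a real obstacle.
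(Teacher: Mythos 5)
Your proof is correct, and while it starts from the same two Nehari identities as the paper, it closes the argument by a genuinely different and more elementary route. The paper first deduces $\limsup_{\beta\to 0}t_{\beta}\geq 1$ from the two identities, then invokes the energy bound $J_{\beta,\Omega}(u_{\beta})\leq \frac{N+2-\mu}{4N-2\mu}S_{H,L}^{\frac{2N-\mu}{N+2-\mu}}$ of Proposition \ref{MPE4} together with $0<\beta<\beta_{1}$ to show that $\int_{\Omega}|\nabla u_{\beta}|^{2}\,dx$ is bounded uniformly in $\beta$, whence $\beta\int_{\Omega}|u_{\beta}|^{2}\,dx=o_{\beta}(1)$ in absolute terms and the gradient and nonlocal energies coincide up to $o_{\beta}(1)$. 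That last step tacitly also needs the nonlocal energy $D_{\beta}$ to stay bounded away from zero in order to pass from $A_{\beta}=D_{\beta}+o_{\beta}(1)$ to $A_{\beta}/D_{\beta}\to 1$ (this follows from the Nehari constraint and the definition of $S_{H,L}$, but the paper does not say so). Your rearrangement $t_{\beta}^{2\cdot 2_{\mu}^{\ast}-2}=\bigl(1-\beta B_{\beta}/A_{\beta}\bigr)^{-1}$ combined with the Poincar\'e bound $B_{\beta}/A_{\beta}\leq 1/\beta_{1}$ is scale invariant, so it needs neither the uniform upper bound on $A_{\beta}$ nor any lower bound on $D_{\beta}$; it buys a shorter, self-contained argument that uses only the variational characterization of $\beta_{1}$, at the cost of nothing. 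The only point worth stating explicitly (which you do) is that the zero extension of $u_{\beta}$ turns the integrals over $\Omega$ into the corresponding integrals over $\mathbb{R}^{N}$, so that membership in ${\cal M}_{\ast}$ is governed by the same quantities $A_{\beta}$ and $D_{\beta}$.
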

\begin{proof}
By the definition of ${\cal M}_{\ast}$, $t_{\beta}$ satisfies
$$
t_{\beta}^{2}\int_{\mathbb{R}^N}|\nabla u_{\beta}|^{2}dx=t_{\beta}^{2\cdot2_{\mu}^{\ast}}\int_{\mathbb{R}^N}\int_{\mathbb{R}^N}
\frac{|u_{\beta}(x)|^{2_{\mu}^{\ast}}|u_{\beta}(y)|^{2_{\mu}^{\ast}}}
{|x-y|^{\mu}}dxdy.
$$
By Remark \ref{LEX}, we have
$$
\int_{\mathbb{R}^N}|\nabla u_{\beta}|^{2}dx=\beta\int_{\mathbb{R}^N} |u_{\beta}|^{2}dx+\int_{\mathbb{R}^N}\int_{\mathbb{R}^N}
\frac{|u_{\beta}(x)|^{2_{\mu}^{\ast}}|u_{\beta}(y)|^{2_{\mu}^{\ast}}}
{|x-y|^{\mu}}dxdy.
$$
From the two equalities above, we get
$$
\lim_{\beta\rightarrow0}\sup t_{\beta}\geq1.
$$
By Proposition \ref{MPE4}, we have
$$\aligned
(\frac{1}{2}-\frac{1}{2\cdot2_{\mu}^{\ast}})&\left[\int_{\mathbb{R}^N}|\nabla u_{\beta}|^{2}dx-\beta\int_{\mathbb{R}^N} |u_{\beta}|^{2}dx\right]\\
&=J_{\beta,\Omega}(u_{\beta})-\frac{1}{2\cdot2_{\mu}^{\ast}}\langle J_{\beta,\Omega}'(u_{\beta}),u_{\beta}\rangle\\
&\leq \frac{N+2-\mu}{4N-2\mu}S_{H,L}^{\frac{2N-\mu}{N+2-\mu}}.
\endaligned
$$
Since $0<\beta<\beta_{1}$, $\displaystyle\int_{\mathbb{R}^N}|\nabla u_{\beta}|^{2}dx$ is bounded uniformly in $\beta$. Then,
$$
\int_{\mathbb{R}^N}|\nabla u_{\beta}|^{2}dx=\int_{\mathbb{R}^N}\int_{\mathbb{R}^N}
\frac{|u_{\beta}(x)|^{2_{\mu}^{\ast}}|u_{\beta}(y)|^{2_{\mu}^{\ast}}}
{|x-y|^{\mu}}dxdy+o_{\beta}(1)
$$
as $\beta>0$ small enough. Thus, $\lim_{\beta\rightarrow0}t_{\beta}=1.$
\end{proof}

Without loss of generality, we may assume that $B_{\delta}\subset\Omega\subset B_{\kappa_{0}\delta}$ for some positive $\kappa_{0}$. Consider a cut-off function $\psi\in C_{0}^{\infty}(\mathbb{R}^N)$ such that
$$\aligned
\psi(x)=1 \hspace{3.14mm} \mbox{if}\hspace{2.14mm} |x|\leq \delta,\ \ \psi(x)=0 \hspace{3.14mm} \mbox{if} \hspace{2.14mm}|x|\geq2 \delta.
\endaligned
$$
We define, for $\varepsilon>0$,
\begin{equation}\label{E55}
\aligned
U_{\varepsilon}(x)&:=\varepsilon^{\frac{2-N}{2}}U(\frac{x}{\varepsilon}),\\
u_{\varepsilon}(x)&:=\psi(x)U_{\varepsilon}(x),
\endaligned
\end{equation}
where $U$ is defined in introduction. From \cite{GY, GY3} and Lemma 1.46 of \cite{Wi}, we know that
as $\varepsilon\rightarrow0^{+}$,

\begin{equation}\label{E56}
\int_{\mathbb{R}^N}|\nabla u_{\varepsilon}|^{2}dx
=C(N,\mu)^{\frac{N-2}{2N-\mu}\cdot\frac{N}{2}}S_{H,L}^{\frac{N}{2}}+O(\varepsilon^{N-2}),
\end{equation}
\begin{equation}\label{E57}
\aligned
\left(\int_{\R^N}\int_{\R^N}\frac{|u_{\varepsilon}(x)|^{2_{\mu}^{\ast}}|u_{\varepsilon}(y)|^{2_{\mu}^{\ast}}}
{|x-y|^{\mu}}dxdy\right)^{\frac{N-2}{2N-\mu}}
&\leq C(N,\mu)^{\frac{N-2}{2N-\mu}\cdot\frac{N}{2}}S_{H,L}^{\frac{N-2}{2}}+O(\varepsilon^{N-2})
\endaligned
\end{equation}
and
\begin{equation}\label{E58}
\int_{\mathbb{R}^N}|u_{\varepsilon}|^{2}dx=\left\{\begin{array}{l}
\displaystyle d\varepsilon^{2}|\ln\varepsilon|+O(\varepsilon^{2}) \hspace{10.64mm} \mbox{if}\hspace{2.14mm} N=4,\\
\displaystyle d\varepsilon^{2}+O(\varepsilon^{N-2}) \hspace{13.14mm} \mbox{if} \hspace{2.14mm}N\geq5,\\
\end{array}
\right.
\end{equation}
where $d$ is a positive constant.

\begin{lem}\label{MS2} $\lim_{\beta\rightarrow0} c_{\beta,\Omega}=c_{\ast}.$ Where $c_{\beta,\Omega}$ is defined as in \eqref{C3}.
\end{lem}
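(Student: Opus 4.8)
The plan is to establish the two one-sided bounds $\limsup_{\beta\to 0^+}c(\beta,\Omega)\le c_{\ast}$ and $\liminf_{\beta\to 0^+}c(\beta,\Omega)\ge c_{\ast}$ and then combine them. Throughout I write $c(\beta,\Omega)$ for the level defined in \eqref{C3} (so that $c_{\beta,\Omega}=c(\beta,\Omega)$), and recall from \eqref{E5} that $c_{\ast}=\frac{N+2-\mu}{4N-2\mu}S_{H,L}^{\frac{2N-\mu}{N+2-\mu}}$.

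The upper bound is immediate from estimates already in hand. Proposition \ref{MPE4} (which rests on Lemma \ref{MPE3}) gives $c(\beta,\Omega)<\frac{N+2-\mu}{4N-2\mu}S_{H,L}^{\frac{2N-\mu}{N+2-\mu}}=c_{\ast}$ for every $\beta\in(0,\beta_1)$, so passing to the $\limsup$ yields $\limsup_{\beta\to0^+}c(\beta,\Omega)\le c_{\ast}$ at once.

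For the lower bound I would exploit the minimizer $u_\beta\in\mathcal{M}_{\beta,\Omega}$ provided by Remark \ref{LEX}, together with the rescaling from Lemma \ref{MS1}. Viewing $u_\beta$ (extended by zero) as an element of $D^{1,2}(\R^N)$, Lemma \ref{MS1} furnishes $t_\beta>0$ with $t_\beta u_\beta\in\mathcal{M}_{\ast}$ and $t_\beta\to1$ as $\beta\to0^+$. Using the Nehari identities, both levels can be written purely in terms of $\int_{\R^N}|\nabla u_\beta|^2\,dx$ and $\int_{\R^N}|u_\beta|^2\,dx$: since $u_\beta\in\mathcal{M}_{\beta,\Omega}$ one has $c(\beta,\Omega)=\frac{N+2-\mu}{4N-2\mu}\big(\int_{\R^N}|\nabla u_\beta|^2\,dx-\beta\int_{\R^N}|u_\beta|^2\,dx\big)$, while since $t_\beta u_\beta\in\mathcal{M}_{\ast}$ one has $J_{\ast}(t_\beta u_\beta)=\frac{N+2-\mu}{4N-2\mu}\,t_\beta^2\int_{\R^N}|\nabla u_\beta|^2\,dx$. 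Consequently
\[
c_{\ast}\le J_{\ast}(t_\beta u_\beta)=c(\beta,\Omega)+\frac{N+2-\mu}{4N-2\mu}\Big[(t_\beta^2-1)\int_{\R^N}|\nabla u_\beta|^2\,dx+\beta\int_{\R^N}|u_\beta|^2\,dx\Big].
\]

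It remains to show that the bracketed error term is $o_\beta(1)$, and this is where the real work sits. The proof of Lemma \ref{MS1} already shows that $\int_{\R^N}|\nabla u_\beta|^2\,dx$ is bounded uniformly for small $\beta$; combined with the Poincaré inequality on the bounded domain $\Omega$ (namely $\beta_1\int_{\R^N}|u_\beta|^2\,dx\le\int_{\R^N}|\nabla u_\beta|^2\,dx$) this also bounds $\int_{\R^N}|u_\beta|^2\,dx$ uniformly. Hence $\beta\int_{\R^N}|u_\beta|^2\,dx\to0$, and since $t_\beta\to1$ the factor $(t_\beta^2-1)$ multiplies a bounded quantity and its contribution likewise vanishes. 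Thus the bracket tends to zero, giving $c_{\ast}\le\liminf_{\beta\to0^+}c(\beta,\Omega)$; together with the upper bound this yields $\lim_{\beta\to0^+}c(\beta,\Omega)=c_{\ast}$. The only genuinely delicate point is the uniform control of $u_\beta$ as $\beta\to0$, which however is exactly what Lemma \ref{MS1} and Proposition \ref{MPE4} deliver.
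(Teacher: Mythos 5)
Your proof is correct, but the lower bound $\liminf_{\beta\to0}c(\beta,\Omega)\ge c_{\ast}$ is obtained by a genuinely different route than in the paper. The paper tests the minimax characterization \eqref{C3} on the truncated instantons $u_{\varepsilon}$ of \eqref{E55}, using the expansions \eqref{E56}--\eqref{E58} to get $\max_{t\ge0}J_{\beta,\Omega}(tu_{\varepsilon})\ge\frac{N+2-\mu}{4N-2\mu}\bigl[S_{H,L}-\beta\,O(\varepsilon^{2})\bigr]^{\frac{2N-\mu}{N+2-\mu}}$ and then lets $\beta\to0$. You instead take the minimizers $u_{\beta}$ of Remark \ref{LEX}, push them onto $\mathcal{M}_{\ast}$ with the factors $t_{\beta}\to1$ from Lemma \ref{MS1}, and compare the two Nehari-level identities $c(\beta,\Omega)=\frac{N+2-\mu}{4N-2\mu}\bigl(\int_{\R^N}|\nabla u_{\beta}|^{2}dx-\beta\int_{\R^N}|u_{\beta}|^{2}dx\bigr)$ and $J_{\ast}(t_{\beta}u_{\beta})=\frac{N+2-\mu}{4N-2\mu}t_{\beta}^{2}\int_{\R^N}|\nabla u_{\beta}|^{2}dx$, controlling the error with the uniform gradient bound and the Poincar\'e inequality; each step checks out, and there is no circularity since Lemma \ref{MS1} is proved independently of the present statement. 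Your route is arguably the tighter one: a lower bound for $\max_{t}J_{\beta,\Omega}(tu_{\varepsilon})$ along the particular family $u_{\varepsilon}$ does not by itself bound the infimum in \eqref{C3} from below, so the paper's final appeal to \eqref{C3} implicitly requires running the same quotient computation for an arbitrary $u\in H_{0}^{1}(\Omega)$ (which does work, via Poincar\'e and the definition of $S_{H,L}$, and yields the quantitative bound $c(\beta,\Omega)\ge\frac{N+2-\mu}{4N-2\mu}\bigl[(1-\beta/\beta_{1})S_{H,L}\bigr]^{\frac{2N-\mu}{N+2-\mu}}$). Your argument avoids this issue entirely, at the mild cost of invoking the attainment of $c(\beta,\Omega)$ and Lemma \ref{MS1}, both of which are already in place in this section.
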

\begin{proof}
If $N\geq5$, by \eqref{E55} to \eqref{E58}, for $\varepsilon$ small enough, we have

$$\aligned
\max_{t\geq0}J_{\beta,\Omega}(tu_{\varepsilon})&=\max_{t\geq0}\left(\frac{t^{2}}{2}\int_{\Omega}|\nabla u_{\varepsilon}|^{2}dx-\frac{\beta t^{2}}{2}\int_{\Omega} |u_{\varepsilon}|^{2}dx-\frac{t^{2\cdot2_{\mu}^{\ast}}}{2\cdot2_{\mu}^{\ast}}\int_{\mathbb{R}^N}\int_{\mathbb{R}^N}
\frac{|u_{\varepsilon}(x)|^{2_{\mu}^{\ast}}|u_{\varepsilon}(y)|^{2_{\mu}^{\ast}}}
{|x-y|^{\mu}}dxdy\right)\\
&=\frac{N+2-\mu}{4N-2\mu}\left[\frac{\displaystyle\int_{\Omega}|\nabla u_{\varepsilon}|^{2}dx-\beta\int_{\Omega}| u_{\varepsilon}|^{2}dx}{\left(\displaystyle\int_{\mathbb{R}^N}\int_{\mathbb{R}^N}
\frac{|u_{\varepsilon}(x)|^{2_{\mu}^{\ast}}|u_{\varepsilon}(y)|^{2_{\mu}^{\ast}}}
{|x-y|^{\mu}}dxdy\right)^{\frac{N-2}{2N-\mu}}}\right]^{\frac{2N-\mu}{N+2-\mu}}\\
&\geq\frac{N+2-\mu}{4N-2\mu}\left[\frac{C(N,\mu)^{\frac{N-2}{2N-\mu}\cdot\frac{N}{2}}
S_{H,L}^{\frac{N}{2}}+O(\varepsilon^{N-2})-\beta O(\varepsilon^{2})}{C(N,\mu)^{\frac{N-2}{2N-\mu}\cdot\frac{N}{2}}
S_{H,L}^{\frac{N-2}{2}}+O(\varepsilon^{N-2})}\right]^{\frac{2N-\mu}{N+2-\mu}}\\
&=\frac{N+2-\mu}{4N-2\mu}\big[S_{H,L}-\beta O(\varepsilon^{2})\big]^{\frac{2N-\mu}{N+2-\mu}}.\\
\endaligned
$$
Then, we have
$$
\lim_{\beta\rightarrow0}\max_{t\geq0}J_{\beta,\Omega}(tu_{\varepsilon})\geq\frac{N+2-\mu}{4N-2\mu}S_{H,L}^{\frac{2N-\mu}{N+2-\mu}}
$$
for $\varepsilon$ small enough. Similarly, if $N=4$, we have
$$
\lim_{\beta\rightarrow0}\max_{t\geq0}J_{\beta,\Omega}(tu_{\varepsilon})\geq
\frac{6-\mu}{16-2\mu}S_{H,L}^{\frac{8-\mu}{6-\mu}}
$$
for $\varepsilon$ small enough. So, by \eqref{C3}, we get
$$
\lim_{\beta\rightarrow0} c_{\beta,\Omega}\geq\frac{N+2-\mu}{4N-2\mu}S_{H,L}^{\frac{2N-\mu}{N+2-\mu}},
$$
that is,
$$
\lim_{\beta\rightarrow0} c_{\beta,\Omega}\geq c_{\ast}.
$$
On the other hand, by Proposition \ref{MPE4} and \eqref{E5}, we already have
$$
c_{\beta,\Omega}< c_{\ast}
$$
for every $0<\beta<\beta_{1}$. Hence the conclusion follows.
\end{proof}

To prove Theorem \ref{EXS3}, we follow the idea in \cite{BC}. The barycenter of function $u\in H_{0}^{1}(\Omega)$ (see \cite{BC}) is defined as
$$
\alpha(u)=\frac{\displaystyle\int_{\mathbb{R}^N}x|\nabla u|^{2}dx}{\displaystyle\int_{\mathbb{R}^N}|\nabla u|^{2}dx}.
$$
Since $\Omega$ is a bounded smooth domain of $\mathbb{R}^N$, we may fix $r>0$ small enough such that
$$
\Omega_{2r}^{+}=\{x\in \mathbb{R}^N: d(x,\Omega)\leq 2r\}
$$
and
$$
\Omega_{r}^{-}=\{x\in \Omega: d(x,\partial\Omega)\geq r\}
$$
are homotopically equivalent to $\Omega$. In particular we denote by
$$
h:\Omega_{2r}^{+}\rightarrow\Omega_{r}^{-}
$$
the homotopic equivalence map such that $h|_{\Omega_{r}^{-}}$ is the identity.

\begin{lem}\label{MS3}
Let $\{u_{n}\}\subset H_{0}^{1}(\Omega)$ be a $(PS)$ sequence for $J_{\ast}$ at level $c_{\ast}=\frac{N+2-\mu}{4N-2\mu}S_{H,L}^{\frac{2N-\mu}{N+2-\mu}}$. Then, for some subsequence of $\{u_{n}\}$, still denoted
by itself, such that\\
(i) $\{u_{n}\}$ has a subsequence strongly convergent in $D^{1,2}(\mathbb{R}^N)$; or\\
(ii) there exists $\{y_{n}\}\subset\Omega$ such that the sequence $v_{n}(x)=u_{n}(x+y_{n})$ converges strongly in $D^{1,2}(\mathbb{R}^N)$.
\end{lem}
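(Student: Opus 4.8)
The plan is to run a concentration--compactness argument of Br\'ezis--Lieb / Lions type, exploiting that $c_{\ast}$ is the \emph{minimal} energy level for nontrivial solutions of \eqref{CCCE}. First I would check that $\{u_{n}\}$ is bounded in $D^{1,2}(\mathbb{R}^N)$: from the identity
\[
J_{\ast}(u_{n})-\frac{1}{2\cdot2_{\mu}^{\ast}}\langle J_{\ast}'(u_{n}),u_{n}\rangle=\frac{N+2-\mu}{4N-2\mu}\int_{\mathbb{R}^N}|\nabla u_{n}|^{2}\,dx
\]
together with $J_{\ast}(u_{n})\to c_{\ast}$ and $|\langle J_{\ast}'(u_{n}),u_{n}\rangle|\leq\|J_{\ast}'(u_{n})\|\,\|u_{n}\|=o(1)\|\nabla u_{n}\|_{2}$, one reads off a uniform bound on $\|\nabla u_{n}\|_{2}$. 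Passing to a subsequence, $u_{n}\rightharpoonup u$ in $D^{1,2}(\mathbb{R}^N)$ and $u_{n}\to u$ a.e.; repeating the weak-continuity argument for the Riesz-convolution nonlinearity already used in the proof of Proposition \ref{PS} shows that $u$ is a (possibly trivial) critical point of $J_{\ast}$, so that either $u=0$ or $u\in\mathcal{M}_{\ast}$ with $J_{\ast}(u)\geq c_{\ast}$.

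Next I would split off the weak limit. Setting $w_{n}:=u_{n}-u$, the Br\'ezis--Lieb splitting for the nonlocal term from \cite{GY}, together with the Br\'ezis--Lieb lemma for the gradient term, gives $J_{\ast}(u_{n})=J_{\ast}(u)+J_{\ast}(w_{n})+o_{n}(1)$ and $\langle J_{\ast}'(w_{n}),w_{n}\rangle=o_{n}(1)$, so that $\int_{\mathbb{R}^N}|\nabla w_{n}|^{2}\,dx\to b$ and the nonlocal energy of $w_{n}$ also tends to $b$ for some $b\geq0$. The definition \eqref{S1} of $S_{H,L}$ then forces $b\geq S_{H,L}\,b^{\frac{N-2}{2N-\mu}}$, whence the dichotomy $b=0$ or $b\geq S_{H,L}^{\frac{2N-\mu}{N-\mu+2}}$.

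Then comes the case distinction. If $u\neq0$, then $J_{\ast}(u)\geq c_{\ast}$, while $J_{\ast}(u)=c_{\ast}-\frac{N+2-\mu}{4N-2\mu}b\leq c_{\ast}$; hence $b=0$, $w_{n}\to0$ strongly, and $u_{n}\to u$ in $D^{1,2}(\mathbb{R}^N)$, which is alternative (i). If $u=0$, then $b=S_{H,L}^{\frac{2N-\mu}{N-\mu+2}}$ exactly, so $\{u_{n}\}$ is a non-vanishing sequence carrying precisely the minimal critical mass. Here I would invoke Lions' concentration--compactness principle applied to the tight family of measures $|\nabla u_{n}|^{2}$ and the associated nonlocal energy density: vanishing is excluded since $b>0$, and dichotomy is excluded because splitting the mass would produce two pieces each costing at least $S_{H,L}^{\frac{2N-\mu}{N-\mu+2}}$, exceeding the available mass $b$. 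The surviving alternative is concentration at a single point, and since each $u_{n}$ is supported in $\overline{\Omega}$ we may take centers $y_{n}\in\Omega$ near that point; translating about $y_{n}$ and comparing with the extremal $\tilde{U}$ of Proposition \ref{ExFu} yields strong convergence of $v_{n}(x)=u_{n}(x+y_{n})$, i.e. alternative (ii).

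The main obstacle is the case $u=0$: the limiting equation \eqref{CCCE} is invariant under both translations and dilations, so no compactness is available a priori, and the argument hinges on the rigidity forced by the minimality of the level $c_{\ast}$. The delicate points are ensuring that the nonlocal energy concentrates at the \emph{same} point as $|\nabla u_{n}|^{2}$ (so that the Br\'ezis--Lieb bookkeeping and the Hardy--Littlewood--Sobolev inequality apply to a single bubble) and that the mass cannot redistribute over separated regions; the strict threshold $b=S_{H,L}^{\frac{2N-\mu}{N-\mu+2}}$ is exactly what rules out any multi-bubble splitting and pins down the single translate realizing the strong convergence in (ii).
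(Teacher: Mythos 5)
Your boundedness argument and your treatment of the case $u\neq 0$ are correct and essentially coincide with the paper's: the paper derives alternative (i) by weak lower semicontinuity plus Fatou rather than by the Br\'ezis--Lieb splitting, but both routes rest on the same two facts, namely that $u\in{\cal M}_{\ast}$ forces $J_{\ast}(u)\geq c_{\ast}$ while the energy identity forces $J_{\ast}(u)\leq c_{\ast}$, so the residual energy $b$ must vanish.

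The case $u=0$ is where your argument has a genuine gap. The first concentration--compactness lemma applied to the measures $|\nabla u_{n}|^{2}dx$ yields nothing here: every $u_{n}$ is supported in the fixed bounded set $\overline{\Omega}$, so tightness is automatic, vanishing and dichotomy are excluded for trivial reasons, and ``concentration at a single point up to translation'' carries no information. More importantly, when the weak limit is $0$ the loss of compactness is caused by dilation, not translation, and no choice of bounded translations $y_{n}\in\Omega$ can turn a concentrating sequence into a strongly convergent one: if $v_{n}=u_{n}(\cdot+y_{n})\to v$ strongly in $D^{1,2}(\mathbb{R}^N)$, then $v$ would be a nontrivial critical point of $J_{\ast}$ at level $c_{\ast}$, hence by Proposition \ref{ExFu} a positive bubble $U_{b}(\cdot-a)$, which cannot be the limit of functions supported in the bounded sets $\overline{\Omega}-y_{n}$. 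The truncated bubbles $t_{n}u_{\varepsilon_{n}}$ from \eqref{E55} with $\varepsilon_{n}\to0$ give a concrete $(PS)_{c_{\ast}}$ sequence in $H_{0}^{1}(\Omega)$ with zero weak limit for which your step ``translating about $y_{n}$ and comparing with the extremal yields strong convergence'' fails. What is actually needed, and what the paper uses immediately after the lemma, is the one-bubble asymptotic $u_{n}=U_{b_{n}}(\cdot-x_{n})+o(1)$ in $D^{1,2}$ with $b_{n}\to0$; obtaining it requires rescaling, i.e. a profile/bubble decomposition in the spirit of the second concentration--compactness lemma or Struwe's global compactness theorem, not the first concentration--compactness lemma plus a translation. (For the record, the paper's own Case 2 instead extracts $y_{n}$ from a non-vanishing property of the sequence and reduces to Case 1 by asserting the translated weak limit is nonzero; your dichotomy-exclusion step would also need extra care, since the cross term in the nonlocal energy between two pieces does not vanish unless their mutual distance tends to infinity, which is impossible inside $\overline{\Omega}$.)
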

\begin{proof}
By \eqref{B6} with $\lambda=0$ and $\beta=0$, we know the sequence $\{u_{n}\}$ is bounded in $H_{0}^{1}(\Omega)$. Hence, there exists $u\in H_{0}^{1}(\Omega)$ such that $u_{n}\rightharpoonup u$ in $H_{0}^{1}(\Omega)$, up to some subsequence. We next continue our arguments by distinguishing two
cases: $u\neq0$ and $u=0$.

\textbf{Case 1.} $u\neq0$.

In this case, since $\{u_{n}\}\subset H_{0}^{1}(\Omega)$ is bounded and $u_{n}\rightharpoonup u$ in $H_{0}^{1}(\Omega)$, we have
\begin{equation}\label{E6}
\int_{\mathbb{R}^N}|\nabla u|^{2}dx\leq\lim_{n\rightarrow\infty}\int_{\mathbb{R}^N}|\nabla u_{n}|^{2}dx.
\end{equation}
Since $\{u_{n}\}$ is a $(PS)$ sequence for $J_{\ast}$, we can get that $\langle J_{\ast}'(u),u \rangle=0$. Observe that we must have the equality in \eqref{E6}. Otherwise, by Fatou's lemma,
$$\aligned
c_{\ast}&\leq J_{\ast}(u)\\
&=J_{\ast}(u)-\frac{1}{2\cdot2_{\mu}^{\ast}}\langle J_{\ast}'(u),u \rangle\\
&=\frac{N+2-\mu}{4N-2\mu}\int_{\mathbb{R}^N}|\nabla u|^{2}dx\\
&\leq\lim_{n\rightarrow\infty}\frac{N+2-\mu}{4N-2\mu}\int_{\mathbb{R}^N}|\nabla u_{n}|^{2}dx\\
&=\lim_{n\rightarrow\infty}(J_{\ast}(u_{n})-\frac{1}{2\cdot2_{\mu}^{\ast}}\langle J_{\ast}'(u_{n}),u_{n} \rangle)\leq c_{\ast},\\
\endaligned
$$
which leads to a contradiction. Thus, up to subsequences, we have
$$
\lim_{n\rightarrow\infty}\int_{\mathbb{R}^N}|\nabla u_{n}|^{2}dx\rightarrow\int_{\mathbb{R}^N}|\nabla u|^{2}dx.
$$
Hence, $\{u_{n}\}$ has a subsequence which convergent to $u$ strongly in $D^{1,2}(\mathbb{R}^N)$.

\textbf{Case 2.} $u=0$.

Since $\{u_{n}\}$ is a $(PS)$ sequence for $J_{\ast}$, we get
$$\aligned
J_{\ast}(u_{n})&=J_{\ast}(u_{n})-\frac{1}{2\cdot2_{\mu}^{\ast}}\langle J_{\ast}'(u_{n}),u_{n} \rangle+o_{n}(1)\\
&=\frac{N+2-\mu}{4N-2\mu}\int_{\mathbb{R}^N}|\nabla u_{n}|^{2}dx+o_{n}(1)\rightarrow\frac{N+2-\mu}{4N-2\mu}S_{H,L}^{\frac{2N-\mu}{N+2-\mu}}.\\
\endaligned
$$
Then $\displaystyle\int_{\mathbb{R}^N}|\nabla u_{n}|^{2}dx\not\rightarrow0$. So, there exist $r,\delta>0$ such that
$$
\lim_{n\rightarrow\infty}\sup_{y\in \mathbb{R}^N}\int_{B_{r}(y)}|\nabla u_{n}|^{2}dx\geq\delta.
$$
Otherwise, we have $\nabla u_{n}\rightarrow0$ in $L^{p}( \mathbb{R}^N)$ with $2<p<2^{\ast}$ from the concentration compactness
principle (see Lemma 1.21 of \cite{Wi}). Since $\{u_{n}\}\subset H_{0}^{1}(\Omega)$ and $\Omega$ is bounded, we can deduce $\nabla u_{n}\rightarrow0$ in $L^{2}(\mathbb{R}^N)$, which contradicts to the fact that $\displaystyle\int_{\mathbb{R}^N}|\nabla u_{n}|^{2}dx\not\rightarrow0$. So, there exist $r,\delta>0$ and $\{y_{n}\}\subset\mathbb{R}^N$ such that
$$
\lim_{n\rightarrow\infty}\sup\int_{B_{r}(y_{n})}|\nabla u_{n}|^{2}dx\geq\delta.
$$
Since $\mbox{supp} u_{n}\subset\Omega$, we can choose $\{y_{n}\}\subset\Omega$. Let $v_{n}(x)=u_{n}(x+y_{n})$, then $J_{\ast}(v_{n})\rightarrow\frac{N+2-\mu}{4N-2\mu}S_{H,L}^{\frac{2N-\mu}{N+2-\mu}}$ and $\langle J_{\ast}'(v_{n}),v_{n} \rangle\rightarrow0$. It is clear that $v_{n}$ is bounded in $D^{1,2}(\mathbb{R}^N)$ and there exists $v\in D^{1,2}(\mathbb{R}^N)$ with $v\neq0$ such that
$v_{n}\rightharpoonup v$ in $D^{1,2}(\mathbb{R}^N)$. Then, the proof follows from the arguments used in Case 1.
\end{proof}

From Proposition \ref{ExFu}, we know that functions of type
$$
U_{b}(\cdot-a)=\frac{C_{0}(b^{2})^{\frac{N-2}{4}}}{(b^{2}+|x-a|^{2})^{\frac{N-2}{2}}}, \hspace{3.14mm}\mbox{for some} \ \ C_{0},b\in\mathbb{R} \hspace{3.14mm}\mbox{and} \hspace{3.14mm}a\in \mathbb{R}^{N}
$$
achieves the minimum of $J_{\ast}$ on ${\cal M}_{\ast}$ and the minimum value is exactly
$$
J_{\ast}(U_{b}(\cdot-a))=\frac{N+2-\mu}{4N-2\mu}\int_{\mathbb{R}^{N}}|\nabla U_{b}(\cdot-a)|^{2}dx=\frac{N+2-\mu}{4N-2\mu}S_{H,L}^{\frac{2N-\mu}{N+2-\mu}}=c_{\ast}.
$$
Let $\{u_{n}\}\subset H_{0}^{1}(\Omega)$ be a $(PS)$ sequence for $J_{\ast}$ at level $c_{\ast}$. Then, Lemma \ref{MS3} implies that
$$
\lim_{n\rightarrow\infty}\int_{\mathbb{R}^N}\big|\nabla \big(u_{n}-\frac{C_{0}(b_{1,n}^{2})^{\frac{N-2}{4}}}{(b_{1,n}^{2}+|x-x_{1,n}|^{2})^{\frac{N-2}{2}}}\big)\big|^{2}dx\rightarrow0
$$
or
$$
\lim_{n\rightarrow\infty}\int_{\mathbb{R}^N}\big|\nabla \big(u_{n}(x+y_{n})-\frac{C_{0}(b_{2,n}^{2})^{\frac{N-2}{4}}}{(b_{2,n}^{2}+|x-x_{2,n}|^{2})^{\frac{N-2}{2}}}\big)\big|^{2}dx\rightarrow0,
$$
which means,
$$
\lim_{n\rightarrow\infty}\int_{\mathbb{R}^N}\big|\nabla \big(u_{n}(x)-\frac{C_{0}(b_{2,n}^{2})^{\frac{N-2}{4}}}{(b_{2,n}^{2}+|x-y_{n}-x_{2,n}|^{2})^{\frac{N-2}{2}}}\big)\big|^{2}dx\rightarrow0
$$
for some sequence $b_{1,n}, b_{2,n}\in\mathbb{R}\backslash\{0\}$ and $x_{1,n}, x_{2,n}\in\mathbb{R}^N$. Notice that $\mbox{supp} u_{n}\in\Omega$, we have
$x_{1,n},y_{n}+x_{2,n}\in\Omega$, then there exists some sequence $b_{n}\in\mathbb{R}\backslash\{0\}\rightarrow0$ and $x_{n}\in\Omega$ such that
\begin{equation}\label{E9}
\lim_{n\rightarrow\infty}\int_{\mathbb{R}^N}|\nabla (u_{n}(x)- U_{b_{n}}(x-x_{n}))|^{2}dx\rightarrow0.
\end{equation}
What's more, we can observe from
\begin{equation}\label{E7}
\lim_{n\rightarrow\infty}\int_{\mathbb{R}^N\backslash \Omega}\big|\nabla \frac{C_{0}(b_{n}^{2})^{\frac{N-2}{4}}}{(b_{n}^{2}+|x-x_{n}|^{2})^{\frac{N-2}{2}}}\big|^{2}dx\rightarrow0
\end{equation}
that $b_{n}\rightarrow0$ as $n$ goes to infinity.

\begin{Prop}\label{MS4} There exists $\beta^{*}=\beta^{*}(r)\in(0, \beta_{1})$ such that, for $0<\beta\leq\beta^{*}$, $\alpha(u)\in \Omega_{r}^{+}$ for every $u\in {\cal M}_{\beta,\Omega}$ with $J_{\beta,\Omega}(u)\leq c(\beta,B_{r})$.
\end{Prop}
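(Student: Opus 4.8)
The plan is to argue by contradiction. If the conclusion fails, then for every $n$ there exist $\beta_{n}\in(0,\beta_{1})$ with $\beta_{n}\to0$ and $u_{n}\in{\cal M}_{\beta_{n},\Omega}$ such that $J_{\beta_{n},\Omega}(u_{n})\le c(\beta_{n},B_{r})$ while $\alpha(u_{n})\notin\Omega_{r}^{+}$. First I would pin down the energy level of this sequence. Since $u_{n}\in{\cal M}_{\beta_{n},\Omega}$, one has $J_{\beta_{n},\Omega}(u_{n})\ge c(\beta_{n},\Omega)$, so that
$$
c(\beta_{n},\Omega)\le J_{\beta_{n},\Omega}(u_{n})\le c(\beta_{n},B_{r}).
$$
By Lemma \ref{MS2} we know $c(\beta_{n},\Omega)\to c_{\ast}$, and the very same argument (using the cut-off bubbles of \eqref{E55} supported now in $B_{r}$, together with the analogue of Proposition \ref{MPE4} on $B_{r}$) gives $c(\beta_{n},B_{r})\to c_{\ast}$ as well. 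Hence $J_{\beta_{n},\Omega}(u_{n})\to c_{\ast}=\frac{N+2-\mu}{4N-2\mu}S_{H,L}^{\frac{2N-\mu}{N+2-\mu}}$.

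Next I would extract the asymptotic profile. Writing $D(u):=\int_{\R^{N}}\int_{\R^{N}}|u(x)|^{2_{\mu}^{\ast}}|u(y)|^{2_{\mu}^{\ast}}|x-y|^{-\mu}\,dxdy$ and using, on the Nehari manifold, the identity
$$
J_{\beta_{n},\Omega}(u_{n})=\frac{N+2-\mu}{4N-2\mu}\Big(\int_{\R^{N}}|\nabla u_{n}|^{2}dx-\beta_{n}\int_{\R^{N}}|u_{n}|^{2}dx\Big),
$$
I note that the Poincar\'e inequality on $H_{0}^{1}(\Omega)$ together with $\beta_{n}\to0$ makes $\{u_{n}\}$ bounded in $H_{0}^{1}(\Omega)$ and forces $\beta_{n}\int_{\R^{N}}|u_{n}|^{2}dx\to0$; consequently $\int_{\R^{N}}|\nabla u_{n}|^{2}dx\to A:=S_{H,L}^{\frac{2N-\mu}{N+2-\mu}}$, and the Nehari relation $\int|\nabla u_{n}|^{2}-\beta_{n}\int|u_{n}|^{2}=D(u_{n})$ then yields $D(u_{n})\to A$. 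Therefore
$$
\frac{\int_{\R^{N}}|\nabla u_{n}|^{2}dx}{D(u_{n})^{\frac{N-2}{2N-\mu}}}\longrightarrow A^{\frac{N+2-\mu}{2N-\mu}}=S_{H,L},
$$
so that, after extending $u_{n}$ by zero, $\{u_{n}\}\subset D^{1,2}(\R^{N})$ is a minimizing sequence for the quotient defining $S_{H,L}$.

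Now I would invoke concentration-compactness. Since by Proposition \ref{ExFu} the constant $S_{H,L}$ is attained exactly by the bubbles, the Lions principle in its Hardy--Littlewood--Sobolev form applies: vanishing is excluded because $D(u_{n})\to A>0$, and dichotomy is excluded because a splitting of the mass would produce two nontrivial pieces each carrying at least the ground-state threshold, incompatible with the total energy converging to the single level $c_{\ast}$. Hence, along a subsequence, the probability measures $\nu_{n}:=|\nabla u_{n}|^{2}/\int_{\R^{N}}|\nabla u_{n}|^{2}dx$ concentrate at one point, $\nu_{n}\rightharpoonup\delta_{x_{0}}$, with concentration scale tending to zero (the latter being forced by $\mbox{supp}\,u_{n}\subset\overline{\Omega}$, since a bubble of fixed scale cannot fit in a bounded set). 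Because each $\nu_{n}$ is supported in the fixed compact set $\overline{\Omega}$, the limit satisfies $x_{0}\in\overline{\Omega}$, and the barycenter is weak-$*$ continuous on measures supported in $\overline{\Omega}$, so
$$
\alpha(u_{n})=\frac{\int_{\R^{N}}x|\nabla u_{n}|^{2}dx}{\int_{\R^{N}}|\nabla u_{n}|^{2}dx}=\int_{\R^{N}}x\,d\nu_{n}\longrightarrow x_{0}\in\overline{\Omega}.
$$
As $\overline{\Omega}\subset\mbox{int}\,\Omega_{r}^{+}$, this gives $\alpha(u_{n})\in\Omega_{r}^{+}$ for all large $n$, contradicting $\alpha(u_{n})\notin\Omega_{r}^{+}$. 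The contradiction proves the claim, with $\beta^{*}=\beta^{*}(r)$ the threshold below which the squeeze $c(\beta,\Omega),c(\beta,B_{r})\to c_{\ast}$ makes the argument effective.

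The main obstacle is the concentration-compactness step: one must rule out vanishing and dichotomy for the nonlocal energy $D(\cdot)$ and, crucially, establish that the concentration scale shrinks to zero so that the \emph{entire} gradient mass collapses onto a single point $x_{0}$ rather than partially leaking to $\partial\Omega$ with a loss of energy. It is precisely this single-point concentration, combined with the a priori localization $\mbox{supp}\,u_{n}\subset\overline{\Omega}$, that controls the barycenter; the compatibility of the cut-off scale $\delta$ with the radius $r$ entering $c(\beta_{n},B_{r})\to c_{\ast}$ is a routine but necessary check.
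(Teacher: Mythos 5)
Your argument is correct in strategy and reaches the right conclusion, but it takes a genuinely different technical route from the paper's. After establishing $J_{\beta_n,\Omega}(u_n)\to c_{\ast}$ exactly as you do (the paper is less explicit than you about $c(\beta_n,B_r)\to c_{\ast}$, which it uses implicitly), the paper does not pass to measures: it projects $u_n$ onto the limit Nehari manifold $\mathcal{M}_{\ast}$, using Lemma \ref{MS1} to get $t_n\to1$, observes that $\{t_nu_n\}$ is then a $(PS)_{c_{\ast}}$ sequence for $J_{\ast}$, and invokes its own compactness alternative (Lemma \ref{MS3}) together with the uniqueness of the extremals (Proposition \ref{ExFu}) to obtain the explicit profile $t_nu_n=U_{b_n}(\cdot-x_n)-v_n$ with $x_n\in\Omega$, $b_n\to0$ and $\|\nabla v_n\|_{2}\to0$; the barycenter is then computed coordinate by coordinate from this decomposition in \eqref{E10}--\eqref{E13}. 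You instead treat $\{u_n\}$ directly as a minimizing sequence for the quotient defining $S_{H,L}$ and deduce single-point concentration of the normalized gradient measures, concluding by weak-$\ast$ continuity of the barycenter on probability measures supported in the compact set $\overline{\Omega}$. Your route is softer and bypasses the explicit computation (including the slightly delicate rescaling of the domain in \eqref{E12}), but two points that you gloss over should be made explicit to close it: the concentration-compactness lemma must be taken in its nonlocal Hardy--Littlewood--Sobolev form (available in \cite{GY}), and you must first show that the weak limit of $u_n$ in $H_0^{1}(\Omega)$ vanishes --- this follows from the non-attainability of $S_{H,L}$ on bounded domains recorded in the introduction, and it is what reduces the limit measure to purely atomic form, after which the strict subadditivity of $t\mapsto t^{\frac{N-2}{2N-\mu}}$ forces a single atom carrying the full mass (your separate remark about the concentration scale tending to zero is then automatic rather than an additional step). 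Both ingredients are available in or near the paper, so I see no gap, only details to be filled in; what the paper's approach buys in exchange for the longer computation is a completely explicit asymptotic profile of $u_n$, which it reuses elsewhere.
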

\begin{proof}
As in \cite{Si}, we argue by contradiction. Assume that there exist sequences $\varepsilon_{n}\rightarrow0$, $\beta_{n}\rightarrow0$ and $u_{n}\in {\cal M}_{\beta_{n},\Omega}$ such that
$$
J_{\beta_{n},\Omega}(u_{n})< c(\beta_{n},B_{r})+\varepsilon_{n}\hspace{3.14mm} \mbox{and}\hspace{3.14mm} \alpha(u_n)\not\in \Omega_{r}^{+}.
$$
Then, by Lemma \ref{MS2}, we have $J_{\beta_{n},\Omega}(u_{n})\rightarrow c_{\ast}$ and $\{u_{n}\}$ is bounded in $H_{0}^{1}(\Omega)$. Let $t_{n}$ such that $t_{n}u_{n}\in {\cal M}_{\ast}$. Using Lemma \ref{MS1} and $u_{n}\in {\cal M}_{\beta_{n},\Omega}$, we know $t_{n}\to 1$. Thanks to $J_{\beta_{n},\Omega}(u_{n})\rightarrow c_{\ast}$, we know
$$\aligned
J_{\beta,\Omega}(u_{n})&-J_{\ast}(t_{n}u_{n})\\
&=\frac{N+2-\mu}{4N-2\mu}(1-t_{n}^{2})\int_{\mathbb{R}^N}|\nabla u_{n}|^{2}dx-(\frac{\beta}{2}-\frac{\beta}{2\cdot2_{\mu}^{\ast}})\int_{\mathbb{R}^N}|u_{n}|^{2}dx=o_{n}(1),\\
\endaligned
$$
leads to the fact that $J_{\ast}(t_{n}u_{n})\rightarrow c_{\ast}$. Thus, $\{t_{n}u_{n}\}$ is a $(PS)$ sequence for $J_{\ast}$ at level $c_{\ast}$. By \eqref{E9}, we have
$$
t_{n}u_{n}-U_{b_{n}}(x-x_{n})\rightarrow0 \hspace{3.14mm} \mbox{in}\hspace{3.14mm}D^{1,2}(\mathbb{R}^N)
$$
for some sequence $b_{n}\in\mathbb{R}\backslash\{0\}$ and $x_{n}\in\Omega$. Then, we can write
$$
t_{n}u_{n}=U_{b_{n}}(\cdot-x_{n})-v_{n},
$$
where $v_{n}$ such that $\displaystyle\int_{\mathbb{R}^N}|\nabla v_{n}|^{2}dx\rightarrow0$ and $U_{b_{n}}(\cdot-x_{n})=v_{n}$ on $\mathbb{R}^N\backslash \Omega$. We write $x\in \mathbb{R}^N$ as $x=(x_{(1)},x_{(2)},\cdot\cdot\cdot,x_{(N)})$, the $i$-th coordinate of the barycenter of $u_{n}$ satisfies
\begin{equation}\label{E10}
\aligned
&\alpha(u_{n})_{(i)}\int_{\mathbb{R}^N}|\nabla (t_{n}u_{n})|^{2}dx\\
&\hspace{5mm}=\int_{\mathbb{R}^N}x_{(i)}|\nabla U_{b_{n}}(\cdot-x_{n})|^{2}dx+\int_{\mathbb{R}^N}x_{(i)}|\nabla v_{n}|^{2}dx-2\int_{\mathbb{R}^N}x_{(i)}\nabla U_{b_{n}}(\cdot-x_{n})\nabla v_{n}dx.
\endaligned\end{equation}
Using $U_{b_{n}}(\cdot-x_{n})=v_{n}$ on $\mathbb{R}^N\backslash \Omega$, we have
\begin{equation}\label{E11}
\aligned
&\alpha(u_{n})_{(i)}\int_{\mathbb{R}^N}|\nabla (t_{n}u_{n})|^{2}dx\\
&\hspace{5mm}=\int_{\Omega}x_{(i)}|\nabla U_{b_{n}}(\cdot-x_{n})|^{2}dx+\int_{\Omega}x_{(i)}|\nabla v_{n}|^{2}dx-2\int_{\Omega}x_{(i)}\nabla U_{b_{n}}(\cdot-x_{n})\nabla v_{n}dx\\
&\hspace{5mm}=A_{n}+B_{n}-2D_{n}.
\endaligned
\end{equation}
By simple computations, we know that
\begin{equation}\label{E12}
A_{n}=b_{n}\int_{\Omega_{n}'}y_{(i)}|\nabla U_{1}(y)|^{2}dy+(x_{n})_{(i)}\int_{\Omega_{n}'}|\nabla U_{1}(y)|^{2}dy,
\end{equation}
where $\Omega_{n}'=\{y\in\mathbb{R}^N:y=x-x_{n},x\in\Omega\}$.
Since $b_{n}\rightarrow0$, we get $b_{n}\displaystyle\int_{\Omega_{n}'}y_{i}|\nabla U_{1}(y)|^{2}dy=o_{n}(1)$. From $\displaystyle\int_{\mathbb{R}^N}|\nabla v_{n}|^{2}dx\rightarrow0$, we get $B_{n}=o_{n}(1)$. Since
$$
\int_{\Omega}x_{(i)}\nabla U_{b_{n}}(\cdot-x_{n})\nabla v_{n}dx\leq C \Big(\int_{\Omega}|\nabla U_{b_{n}}(\cdot-x_{n})|^{2}dx\Big)^{\frac{1}{2}}\Big(\int_{\Omega}|\nabla v_{n}|^{2}dx\Big)^{\frac{1}{2}},
$$
then, $D_{n}=o_{n}(1)$. We know that $\displaystyle\int_{\mathbb{R}^N}|\nabla (t_{n}u_{n})|^{2}dx=\displaystyle\int_{\mathbb{R}^N}|\nabla U_{1}(x)|^{2}dx+o_{n}(1)$. In fact, we have shown that
\begin{equation}\label{E13}
\alpha(u_{n})_{(i)}=\frac{(x_{n})_{(i)}\displaystyle\int_{\Omega_{n}'}|\nabla U_{1}(x)|^{2}dx+o_{n}(1)}{\displaystyle\int_{\mathbb{R}^N}|\nabla U_{1}(x)|^{2}dx+o_{n}(1)}.
\end{equation}
Since $x_{n}\in\Omega$ and $\Omega_{n}'\subset\mathbb{R}^N$, \eqref{E13} implies that $\alpha(u_{n})\in\overline{\Omega}$ which is in contrast with assumption and proves the proposition.
\end{proof}

We choose $R>0$ such that $\overline{\Omega}\subset B_{R}$ and set
$$
\eta(t)=\left\{\begin{array}{l}
\displaystyle 1 \hspace{17.14mm} \mbox{if}\hspace{2.14mm} 0\leq t\leq R,\\
\displaystyle R/t \hspace{13.14mm} \mbox{if} \hspace{2.14mm}R\leq t.\\
\end{array}
\right.
$$
On $D^{1,2}(\mathbb{R}^N)$ we define
\begin{equation}\label{BCu}
\alpha_{c}(u)=\frac{\displaystyle\int_{\mathbb{R}^N}x\eta(|x|)|\nabla u|^{2}dx}{\displaystyle\int_{\mathbb{R}^N}|\nabla u|^{2}dx}.
\end{equation}

\begin{Prop}\label{MS5} There exist $0<\beta^{*}<\beta_{1}$ and for each $0<\beta\leq\beta^{*}$ a number $\lambda_{\beta}\geq \lambda_{\beta}$ such that, $\alpha_{c}(u)\in \Omega_{2r}^{+}$ for every $\lambda\geq\lambda_{\beta}$ and $u\in {\cal M}_{\lambda,\beta}$ with $J_{\lambda,\beta}(u)\leq c(\beta,B_{r})$.
\end{Prop}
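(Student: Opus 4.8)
The plan is to argue by contradiction, in the spirit of Proposition \ref{MS4}, but now tracking the two limits $\beta\to0$ and $\lambda\to\infty$ at once and replacing the compact-support information (automatic in $H_0^1(\Omega)$) by the penalization carried by $\lambda V$. Negating the statement along $\beta^{*}=1/n\to 0$ produces sequences $\beta_n\to0$, $\lambda_n\to\infty$ (we may take $\lambda_n\geq\lambda_{\beta_n}$) and $u_n\in{\cal M}_{\lambda_n,\beta_n}$ with $J_{\lambda_n,\beta_n}(u_n)\leq c(\beta_n,B_r)$ yet $\alpha_c(u_n)\notin\Omega_{2r}^{+}$. First I would record boundedness: on the Nehari manifold $J_{\lambda_n,\beta_n}(u_n)=\frac{N+2-\mu}{4N-2\mu}\langle L_{\lambda_n,\beta_n}u_n,u_n\rangle$, and Lemma \ref{PR2}, whose coercivity constant can be taken uniform for small $\beta$, gives $\|u_n\|_{\lambda_n}^2\leq C$; in particular $\int_{\R^N}u_n^2$ is bounded and $\beta_n\int_{\R^N}u_n^2\to0$.

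Next I would pin the level to $c_{\ast}$. Writing $P_n:=\int_{\R^N}\int_{\R^N}\frac{|u_n(x)|^{2_{\mu}^{\ast}}|u_n(y)|^{2_{\mu}^{\ast}}}{|x-y|^{\mu}}dxdy=\langle L_{\lambda_n,\beta_n}u_n,u_n\rangle$, the definition \eqref{S1} of $S_{H,L}$ together with $\lambda_n V\geq0$ gives $P_n\geq S_{H,L}P_n^{\frac{N-2}{2N-\mu}}-\beta_n\int_{\R^N}u_n^2$, so $\liminf_n P_n\geq S_{H,L}^{\frac{2N-\mu}{N+2-\mu}}$. Since $c(\beta_n,B_r)\to c_{\ast}=\frac{N+2-\mu}{4N-2\mu}S_{H,L}^{\frac{2N-\mu}{N+2-\mu}}$ by Lemma \ref{MS2} applied to $B_r$ (Remark \ref{LEX}) and \eqref{E5}, we get $J_{\lambda_n,\beta_n}(u_n)=\frac{N+2-\mu}{4N-2\mu}P_n\to c_{\ast}$, hence $P_n\to S_{H,L}^{\frac{2N-\mu}{N+2-\mu}}$, $\int_{\R^N}|\nabla u_n|^2\to S_{H,L}^{\frac{2N-\mu}{N+2-\mu}}$, and therefore $\lambda_n\int_{\R^N}Vu_n^2=P_n-\int_{\R^N}|\nabla u_n|^2+\beta_n\int_{\R^N}u_n^2\to0$. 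Choosing $t_n$ with $t_n u_n\in{\cal M}_{\ast}$, the two previous limits force $t_n\to1$ and $J_{\ast}(t_nu_n)\to c_{\ast}$, so $\{t_nu_n\}$ is a $(PS)$ sequence for $J_{\ast}$ at $c_{\ast}$; as in Lemma \ref{MS3} and \eqref{E9} (Struwe's single-bubble decomposition, now with the center unconstrained) one has $t_nu_n-U_{b_n}(\cdot-x_n)\to0$ in $D^{1,2}(\R^N)$ for some $b_n\to0$ and $x_n\in\R^N$, which up to a subsequence converge $x_n\to\bar x$. Carrying out the barycenter computation exactly as in \eqref{E10}–\eqref{E13} but with the truncation $\eta$ and the full-space integrals then yields $\alpha_c(u_n)\to\bar x\,\eta(|\bar x|)$, and the concentration of the bubble also gives $u_n\to0$ in $L^2(\R^N)$, so that $\frac{u_n^2}{\int_{\R^N}u_n^2}\rightharpoonup\delta_{\bar x}$.

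It remains to show $\bar x\in\overline{\Omega}$, which is where $\lambda_n\to\infty$ enters and is the heart of the matter. Suppose $\bar x\notin\overline{\Omega}=V^{-1}(0)$; then $V\geq\delta_0>0$ on a ball $B_\rho(\bar x)$, and the $L^2$-concentration gives $\int_{\R^N}Vu_n^2\geq\big(V(\bar x)+o(1)\big)\int_{\R^N}u_n^2\geq\tfrac{\delta_0}{2}\int_{\R^N}u_n^2$ for large $n$. Consequently $\lambda_n\int_{\R^N}Vu_n^2-\beta_n\int_{\R^N}u_n^2\geq\big(\tfrac{\delta_0}{2}\lambda_n-\beta_n\big)\int_{\R^N}u_n^2>0$ for large $n$, because $\lambda_n\to\infty$ while $\beta_n\to0$. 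Inserting the Sobolev bound $\int_{\R^N}|\nabla u_n|^2\geq S_{H,L}P_n^{\frac{N-2}{2N-\mu}}$ into $P_n=\int_{\R^N}|\nabla u_n|^2+\lambda_n\int_{\R^N}Vu_n^2-\beta_n\int_{\R^N}u_n^2$ gives $P_n>S_{H,L}P_n^{\frac{N-2}{2N-\mu}}$, i.e. $P_n>S_{H,L}^{\frac{2N-\mu}{N+2-\mu}}$ and hence $J_{\lambda_n,\beta_n}(u_n)>c_{\ast}$, contradicting $J_{\lambda_n,\beta_n}(u_n)\leq c(\beta_n,B_r)<c_{\ast}$ (Proposition \ref{MPE4}). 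Thus $\bar x\in\overline{\Omega}\subset B_R$, so $\eta(|\bar x|)=1$ and $\alpha_c(u_n)\to\bar x\in\overline{\Omega}\subset\mathrm{int}\,\Omega_{2r}^{+}$, forcing $\alpha_c(u_n)\in\Omega_{2r}^{+}$ for large $n$ and contradicting $\alpha_c(u_n)\notin\Omega_{2r}^{+}$. This proves the proposition.

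The main obstacle, and the essential difference from Proposition \ref{MS4}, is precisely the localization $\bar x\in\overline{\Omega}$: since the competitors now live on all of $\R^N$, the concentration point is not a priori in $\Omega$, and the decay $\lambda_n\int_{\R^N}Vu_n^2\to0$ alone is too weak (a bubble may shrink fast enough to survive outside $\overline{\Omega}$). The point is to upgrade this into the \emph{strict} inequality $\lambda_n\int_{\R^N}Vu_n^2-\beta_n\int_{\R^N}u_n^2>0$ whenever the mass sits off $\overline{\Omega}$, which relies on the single-point $L^2$-concentration of $u_n$ together with the competing rates $\lambda_n\to\infty$ versus $\beta_n\to0$; establishing that clean single-bubble picture (ruling out a nontrivial weak limit and secondary concentrations from $J_{\lambda_n,\beta_n}(u_n)\to c_{\ast}$) is the technically delicate step.
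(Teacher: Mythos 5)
Your contradiction setup (diagonalizing $\beta_n\to0$ and $\lambda_n\to\infty$ simultaneously) is legitimate, but it pushes the energy level up to the threshold $c_{\ast}$ exactly, and the full-space concentration analysis you then need contains two genuine gaps. First, you import the single-bubble decomposition $t_nu_n-U_{b_n}(\cdot-x_n)\to0$ in $D^{1,2}(\mathbb{R}^N)$ with $b_n\to0$ and $x_n$ convergent from Lemma \ref{MS3} and \eqref{E9}; but there the conclusions $b_n\to 0$ and $x_n\in\Omega$ come precisely from $\mbox{supp}\,u_n\subset\Omega$ (see \eqref{E7}), whereas for competitors living on all of $\mathbb{R}^N$ nothing in your estimates excludes $b_n\to\infty$ (the bubble flattens out) or $|x_n|\to\infty$. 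Second, and more seriously, even granting $b_n\to0$ and $x_n\to\bar x$, the decomposition controls only the distribution of the Dirichlet energy $|\nabla u_n|^2$, not of the $L^2$ mass: the remainder $w_n:=t_nu_n-U_{b_n}(\cdot-x_n)$ satisfies $\displaystyle\int_{\mathbb{R}^N}|\nabla w_n|^2dx\to0$, hence $|w_n|_{2^{\ast}}\to0$, but $|w_n|_{2}$ need not be small relative to $|u_n|_{2}$ (both tend to $0$), and its mass may well sit inside $\Omega$ where $V=0$. Hence the claim $u_n^2/\int_{\mathbb{R}^N}u_n^2dx\rightharpoonup\delta_{\bar x}$, and with it the inequality $\int_{\mathbb{R}^N}Vu_n^2dx\geq(V(\bar x)+o(1))\int_{\mathbb{R}^N}u_n^2dx$, is unproved; without it the sign of $\lambda_n\int_{\mathbb{R}^N}Vu_n^2dx-\beta_n\int_{\mathbb{R}^N}u_n^2dx$, which is the linchpin of your final contradiction, is undetermined (for instance $\lambda_n\int Vu_n^2dx$ is of order $\lambda_nb_n^2$ and can be dominated by $\beta_n\int u_n^2dx$ if $b_n$ shrinks fast enough while the $L^2$ mass stays in $\Omega$). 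You yourself flag this as ``the technically delicate step'', but it is not a technicality: it is the missing proof.

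The paper sidesteps all of this by not diagonalizing: it fixes $\beta$ small and lets only $\lambda_n\to\infty$, so the level stays strictly below $c_{\ast}$ and no bubbling occurs. Boundedness of $\|u_n\|_{\lambda_n}$ plus Lemma \ref{EMB} then gives a weak limit $v_{\beta}\in H_0^{1}(\Omega)$, so localization in $\overline{\Omega}$ is automatic, with $u_n\to v_{\beta}$ in $L^{2}(\mathbb{R}^N)$; the proof splits into two cases according to whether $\int_{\mathbb{R}^N}\int_{\mathbb{R}^N}\frac{|v_{\beta}(x)|^{2_{\mu}^{\ast}}|v_{\beta}(y)|^{2_{\mu}^{\ast}}}{|x-y|^{\mu}}dxdy\leq\langle L_{0,\beta}v_{\beta},v_{\beta}\rangle$ or not. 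In the first case the Dirichlet energies converge and $\alpha_c(u_n)\to\alpha(v_{\beta})\in\Omega_{r}^{+}$ by Proposition \ref{MS4}; in the second one rescales $v_{\beta}$ onto ${\cal M}_{\beta,\Omega}$ and uses $c(\beta,B_{r})-c(\beta,\Omega)\to0$ as $\beta\to0$ to conclude. If you want to salvage your route you would need a genuine concentration--compactness argument for both measures $|\nabla u_n|^{2}dx$ and $u_n^{2}dx$ together with condition $(V_2)$; the paper's reduction to Proposition \ref{MS4} is considerably shorter.
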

\begin{proof}
Due to the appearance of the convolution part, we adapt the arguments in \cite{CD1} to suit the new situation. Assume by contradiction that, for $\beta>0$ arbitrarily small, there is a
sequence $\{u_{n}\}\subset {\cal M}_{\lambda_{n},\beta} $ such that $\lambda_{n}\rightarrow\infty$, $J_{\lambda_{n},\beta}(u_{n})\rightarrow c\leq c(\beta,B_{r})$ and $\alpha_{c}(u_{n})\not\in \Omega_{2r}^{+}$. By the proof of Proposition \ref{PS}, we know $\{u_{n}\}$ is bounded in $E$. By Lemma \ref{EMB}, there is a $v_{\beta}\in H_{0}^{1}(\Omega)$ such that, up to a subsequence, $u_{n}\rightharpoonup v_{\beta}$ in $E$ and $u_{n}\rightarrow v_{\beta}$ in $L^{2}(\mathbb{R}^N)$.  Next we continue the proof by distinguishing two
cases: $$\displaystyle\int_{\mathbb{R}^N}\int_{\mathbb{R}^N}
\frac{|v_{\beta}(x)|^{2_{\mu}^{\ast}}|v_{\beta}(y)|^{2_{\mu}^{\ast}}}
{|x-y|^{\mu}}dxdy\leq\langle L_{0,\beta}v_{\beta},v_{\beta}\rangle$$ and $$\displaystyle\int_{\mathbb{R}^N}\int_{\mathbb{R}^N}
\frac{|v_{\beta}(x)|^{2_{\mu}^{\ast}}|v_{\beta}(y)|^{2_{\mu}^{\ast}}}
{|x-y|^{\mu}}dxdy>\langle L_{0,\beta} v_{\beta},v_{\beta}\rangle.$$

\textbf{Case 1. } $\displaystyle\int_{\mathbb{R}^N}\int_{\mathbb{R}^N}
\frac{|v_{\beta}(x)|^{2_{\mu}^{\ast}}|v_{\beta}(y)|^{2_{\mu}^{\ast}}}
{|x-y|^{\mu}}dxdy\leq\langle L_{0,\beta} v_{\beta},v_{\beta}\rangle$.

Since $\{u_{n}\}\subset {\cal M}_{\lambda_{n},\beta}$, $\lambda_{n}\rightarrow\infty$ and $J_{\lambda_{n},\beta}(u_{n})\rightarrow c\leq c(\beta,B_{r})<\frac{N+2-\mu}{4N-2\mu}S_{H,L}^{\frac{2N-\mu}{N+2-\mu}}$. We write $v_{n}:=u_{n}-v_{\beta}$. By the proof of the asymptotic behavior of the solutions of \eqref{CCE} in Theorem \ref{EXS}, we know,

$$
\lim_{n\rightarrow\infty}\int_{\mathbb{R}^N}|\nabla u_{n}|^{2}dx\rightarrow\int_{\mathbb{R}^N}|\nabla v_{\beta}|^{2}dx.
$$
Consequently, $\alpha_{c}(u_{n})\rightarrow\alpha(v_{\beta})$. However, $J_{\beta,\Omega}(v_{\beta})\leq \lim_{n\rightarrow\infty}J_{\lambda_{n},\beta}(u_{n})\leq c(\beta,B_{r})$, it follows from Proposition \ref{MS4} that $\alpha(v_{\beta})\in \Omega_{r}^{+}$, this is a contradiction.

\textbf{Case 2. } $\displaystyle\int_{\mathbb{R}^N}\int_{\mathbb{R}^N}
\frac{|v_{\beta}(x)|^{2_{\mu}^{\ast}}|v_{\beta}(y)|^{2_{\mu}^{\ast}}}
{|x-y|^{\mu}}dxdy>\langle L_{0,\beta} v_{\beta},v_{\beta}\rangle$.

By the arguments of Proposition \ref{PS} and $c(\beta,B_{r})<\frac{N+2-\mu}{4N-2\mu}S_{H,L}^{\frac{2N-\mu}{N+2-\mu}}$, we know $\|u_{n}\|_{L_{\lambda_{n},\beta}}$ is bounded uniformly in $0<\beta<\beta_{1}$ and $\lambda\geq\lambda_{\beta}$. Thanks to the proof of Lemma \ref{MS1}, we know $|v_{\beta}|_{2}^{2}$ is bounded uniformly in $0<\beta<\beta_{1}$. Then, $\beta|v_{\beta}|_{2}^{2}=o_{\beta}(1)$ and $\beta|u_{n}|_{2}^{2}=o_{\beta}(1)$ for $\beta>0$ small enough. It is easy to see that there exists $t_{\beta}\in(0,1)$ such that $t_{\beta}v_{\beta}\in {\cal M}_{\beta,\Omega}$. Then, we have
$$
t_{\beta}^{2}\int_{\mathbb{R}^N}|\nabla v_{\beta}|^{2}dx=t_{\beta}^{22_{\mu}^{\ast}}\int_{\mathbb{R}^N}\int_{\mathbb{R}^N}
\frac{|v_{\beta}(x)|^{2_{\mu}^{\ast}}|v_{\beta}(y)|^{2_{\mu}^{\ast}}}
{|x-y|^{\mu}}dxdy+\beta t_{\beta}^{2}\int_{\mathbb{R}^N}|v_{\beta}|^{2}dx.
$$
Combining this with the fact that $\{u_{n}\}\subset {\cal M}_{\lambda_{n},\beta} $ we get
$$
J_{\beta,\Omega}(t_{\beta}v_{\beta})=\frac{N+2-\mu}{4N-2\mu}t_{\beta}^{2}\int_{\mathbb{R}^N}|\nabla v_{\beta}|^{2}dx-\beta\frac{N+2-\mu}{4N-2\mu}t_{\beta}^{2}\int_{\mathbb{R}^N}|v_{\beta}|^{2}dx
$$
and
$$
J_{\lambda_{n},\beta}(u_{n})=\frac{N+2-\mu}{4N-2\mu}\int_{\mathbb{R}^N}(|\nabla u_{n}|^{2}+\lambda_{n}V|u_{n}|^{2})dx-\beta\frac{N+2-\mu}{4N-2\mu}\int_{\mathbb{R}^N}|u_{n}|^{2}dx.
$$
Thus,
$$\aligned
c(\beta,\Omega)+\beta\frac{N+2-\mu}{4N-2\mu}t_{\beta}^{2}\int_{\mathbb{R}^N}|v_{\beta}|^{2}dx
&\leq \frac{N+2-\mu}{4N-2\mu}t_{\beta}^{2}\int_{\mathbb{R}^N}|\nabla v_{\beta}|^{2}dx\\
&\leq \lim_{n\rightarrow\infty}\frac{N+2-\mu}{4N-2\mu}\int_{\mathbb{R}^N}|\nabla u_{n}|^{2}dx\\
&\leq \lim_{n\rightarrow\infty}\frac{N+2-\mu}{4N-2\mu}\int_{\mathbb{R}^N}(|\nabla u_{n}|^{2}+\lambda_{n}V|u_{n}|^{2})dx\\
&\leq c(\beta,B_{r})+\beta\frac{N+2-\mu}{4N-2\mu}\int_{\mathbb{R}^N}|u_{n}|^{2}dx.
\endaligned
$$
It follows that, for $n$ large enough,
$$
\left|\int_{\mathbb{R}^N}|\nabla u_{n}|^{2}dx-t_{\beta}^{2}\int_{\mathbb{R}^N}|\nabla v_{\beta}|^{2}dx\right|\leq\frac{4N-2\mu}{N+2-\mu}(c(\beta,B_{r})-c(\beta,\Omega))+o_{\beta}(1).
$$
Since $|c(\beta,B_{r})-c(\beta,\Omega)|\rightarrow0$ as $\beta\rightarrow0$, this implies that $\left|\displaystyle\int_{\mathbb{R}^N}|\nabla u_{n}|^{2}dx-t_{\beta}^{2}\displaystyle\int_{\mathbb{R}^N}|\nabla v_{\beta}|^{2}dx\right|<r$ for all $\beta$ sufficiently small. But, by Proposition \ref{MS4}, there holds $\alpha(t_{\beta}v_{\beta})\in \Omega_{r}^{+}$ which contradicts to the assumption $\alpha_{c}(u_{n})\not\in \Omega_{2r}^{+}$ again.
\end{proof}

For convenience, we denote $J_{\lambda,\beta}^{\leq c(\beta,B_{r})}=\{z\in {\cal M}_{\lambda,\beta}:J_{\lambda,\beta}(z)\leq c(\beta,B_{r})\}$.

\noindent
{\bf Proof of Theorem \ref{EXS3}.}
For $0<\beta\leq\beta^{*}$ and $\lambda\geq\lambda_{\beta}$, we define two maps
$$
\Omega_{r}^{-}\stackrel{\psi_{\beta,r}}{\longrightarrow} J_{\lambda,\beta}^{\leq c(\beta,B_{r})}\stackrel{h\circ\alpha_{c}} {\longrightarrow}\Omega_{r}^{-}
$$
as follow: The map $\alpha_{c}$ is defined in \eqref{BCu} and $
h:\Omega_{2r}^{+}\rightarrow\Omega_{r}^{-}
$
 is the homotopic equivalence map such that $h|_{\Omega_{r}^{-}}$ is the identity. Let $u_{\beta,B_{r}}\in H_{0}^{1}(B_{r})$ be a minimizer of $c(\beta,B_{r})$ on ${\cal M}_{\beta,B_{r}}$. We define the map $\psi_{\beta,r}:\Omega_{r}^{-}\rightarrow {\cal M}_{\lambda,\beta}$ by
$$
\psi_{\beta,r}(y)(x)=\left\{\begin{array}{l}
\displaystyle u_{\beta,B_{r}}(x-y) \hspace{10.14mm} \mbox{if}\hspace{2.14mm} x\in B_{r}(y),\\
\displaystyle 0 \hspace{27.64mm} \mbox{if} \hspace{2.14mm}x\in\Omega\backslash B_{r}(y).\\
\end{array}
\right.
$$
Then, we can see that $\psi_{\beta,r}(y)(x)\equiv0$ in $\mathbb{R}^N\backslash \Omega$ for every $y\in \Omega_{r}^{-}$, it
follows that $\alpha_{c}(\psi_{\beta,r}(y)(x))\in B_{r}(y)$, $\psi_{\beta,r}(y)(x)\subset {\cal M}_{\lambda,\beta}$ and $J_{\lambda,\beta}(\psi_{\beta,r}(y)(x))=J_{\beta,B_{r}}(\psi_{\beta,r}(y)(x))= c(\beta,B_{r})$. Thus $\psi_{\beta,r}$ is also well defined. Moreover, $\alpha_{c}\circ\psi_{\beta,r}$ is the inclusion $\Omega_{r}^{-}\rightarrow\Omega_{2r}^{+}$. Then we know the composite map $h\circ\alpha_{c}\circ\psi_{\beta,r}$ is homotopic to the identity of $\Omega_{r}^{-}$. By a property of the
category, we get
$$
\mbox{cat}_{J_{\lambda,\beta}^{\leq c(\beta,B_{r})}}(J_{\lambda,\beta}^{\leq c(\beta,B_{r})})\geq\mbox{cat}_{\Omega_{r}^{-}}(\Omega_{r}^{-})
$$
(see e.g. \cite{Ja}) and the choice of $r$ gives $\mbox{cat}_{\Omega_{r}^{-}}(\Omega_{r}^{-})=\mbox{cat}_{\overline{\Omega}}(\overline{\Omega})$. It follows from Proposition \ref{PS} that the $(PS)$ condition is verified on ${\cal M}_{\lambda,\beta}$, by applying the Lusternik-Schnirelmann theory (see e.g. \cite{Pa, Wi}) we obtain the existence of at least $\mbox{cat}_{\overline{\Omega}}(\overline{\Omega})$ critical points for $J_{\lambda,\beta}$ on the manifold ${\cal M}_{\lambda,\beta}$ which are the solutions of \eqref{CCE}. The proof is completed. $\hfill{} \Box$

\section{Existence of solutions for the case $\beta>\beta_{1}$ }
In the following we consider the critical Choquard equation \eqref{CCE} with indefinite potential. Assume that, $0<\mu<4$, $N\geq4$, $\beta>\beta_{1}$, $\beta\neq\beta_{j}$ for any $j>1$ and the potential $V(x)$ satisfies $(V_1)$ and $(V_3)$.

As above sections, we still denote the operator $L_{\lambda,\beta}:=-\Delta+\lambda V(x)-\beta$, particularly, $L_{0,\beta}=-\Delta-\beta$. 
In the following we denote by $|L_{\lambda,\beta}|$ the absolute value of operator $L_{\lambda,\beta}$ and let
$
E_{\lambda}=D(|L_{\lambda,\beta}|^{\frac{1}{2}})
$ be the Hilbert space equipped with the inner product $$
( u_{1},u_{2})=\langle|L_{\lambda,\beta}|^{\frac{1}{2}}u_{1},
|L_{\lambda,\beta}|^{\frac{1}{2}}u_{2}\rangle
$$
and the norm
$$
\|u\|_{L_{\lambda,\beta}}=( u,u)^{\frac{1}{2}}.
$$
By conditions $(V_1)$ and $(V_3)$, $E_{\lambda}$
is continuously embedded in $H^{1}(\mathbb{R}^N)$ for $\lambda$ large enough.

By condition $(V_1)$ and Remark \ref{E0}, we know that the zero set of $V(x)$ is a bounded domain in $\mathbb{R}^{N}$ and so we have that inf $\sigma_{e}(L_{\lambda,\beta})\geq\lambda M_{0}$ and $L_{\lambda,\beta}$ has finite
Morse index on $E_{\lambda}$, where $\sigma_{e}(L_{\lambda,\beta})$ denote the essential spectrum of operator $L_{\lambda,\beta}$ in $E_{\lambda}$ and $M_{0}$ is the same constant appeared in Remark \ref{E0}. Thus $E_{\lambda}$ splits as an orthogonal sum $E_{\lambda}=E_{\lambda}^{-}\oplus E_{\lambda}^{0}\oplus E_{\lambda}^{+}$  according to the
negative, zero and positive eigenspace of $L_{\lambda,\beta}$ and dim $E_{\lambda}^{-}\cup E_{\lambda}^{0}<\infty$.
On the other hand, since inf $\sigma_{e}(L_{\lambda,\beta})\geq\lambda M_{0}$, we may assume that
$\zeta_{1}^{\lambda}<\zeta_{2}^{\lambda}<...< \zeta_{k_{\lambda}}^{\lambda}<e(L_{\lambda,\beta})$ be the distinct eigenvalues of $L_{\lambda,\beta}$ in $E_{\lambda}$ and $k_{\lambda}\in\N$ goes to $\infty$ as $\lambda\rightarrow\infty$. The operator $L_{0,\beta}$ has discrete spectrum in $H_{0}^{1}(\Omega)$ and we denote them as
$$\zeta_{1}<\zeta_{2}<\cdot\cdot\cdot<\zeta_{j}<\zeta_{j+1}<\cdot\cdot\cdot,\ \ \zeta_{j}=\beta_{j}-\beta$$
which are the distinct eigenvalues of $L_{0,\beta}$ in
$H_{0}^{1}(\Omega)$. 
 Let $\F_{j}^{\lambda}(j\leq k_{\lambda})$ be the corresponding eigenspaces of $\zeta_{j}^{\lambda}$ and $\F_{j}$ be the corresponding eigenspaces of $\zeta_{j}$. Involving the relationship the eigenspaces, the following two Lemmas are taken from \cite{Tz}.
\begin{lem}  \label{IP1}
$\zeta_{j}^{\lambda}\rightarrow\zeta_{j}$ and $\F_{j}^{\lambda}\rightarrow\F_{j}$ as $\lambda\rightarrow\infty$.
\end{lem}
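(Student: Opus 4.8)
The plan is to reduce Lemma \ref{IP1} to the classical fact that the spectrum of $-\Delta+\lambda V$ converges to the Dirichlet spectrum of $\Omega$, and then to extract eigenfunction convergence from the compactness in Lemma \ref{EMB}. Since $L_{\lambda,\beta}=-\Delta+\lambda V-\beta=L_{\lambda,0}-\beta$, setting $\beta_j^\lambda:=\zeta_j^\lambda+\beta$ makes the $\beta_j^\lambda$ exactly the eigenvalues of $L_{\lambda,0}$ on $E$, while $\zeta_j+\beta=\beta_j$ is the $j$-th Dirichlet eigenvalue of $-\Delta$ on $\Omega$. Thus it suffices to prove $\beta_j^\lambda\to\beta_j$ together with convergence of the associated eigenspaces. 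First I would record the Courant--Fischer characterization
\[
\beta_j^\lambda=\inf_{\substack{W\subset E\\ \dim W=j}}\ \sup_{u\in W\setminus\{0\}}\frac{\displaystyle\int_{\mathbb{R}^N}(|\nabla u|^{2}+\lambda V u^{2})\,dx}{\displaystyle\int_{\mathbb{R}^N}u^{2}\,dx},
\]
where these eigenvalues are counted with multiplicity. Because the quadratic form is nondecreasing in $\lambda$, each $\beta_j^\lambda$ is nondecreasing in $\lambda$; and testing with the span of the first $j$ Dirichlet eigenfunctions of $\Omega$ (on whose supports $V\equiv0$, so the term $\lambda\int Vu^2$ vanishes) yields the uniform bound $\beta_j^\lambda\le\beta_j$. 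Monotonicity plus this bound forces $\beta_j^\lambda\to\bar\beta_j\le\beta_j$ as $\lambda\to\infty$.

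Next I would upgrade the limit to equality by a compactness-and-counting argument. Normalize eigenfunctions $u_k^\lambda$ of $L_{\lambda,0}$ with $|u_k^\lambda|_2=1$; then $\int|\nabla u_k^\lambda|^2+\lambda\int V(u_k^\lambda)^2=\beta_k^\lambda\le\beta_j$ for $k\le j$, so $\|u_k^\lambda\|_\lambda^2=\beta_k^\lambda+1$ is bounded uniformly. By Lemma \ref{EMB}, along a subsequence $u_k^\lambda\to u_k$ strongly in $L^2(\mathbb{R}^N)$ with $u_k\in H_0^1(\Omega)$ and $|u_k|_2=1$, so $u_k\neq0$; strong $L^2$-convergence also preserves $L^2$-orthonormality, so $u_1,\dots,u_j$ are linearly independent. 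Passing to the limit in the weak form $\int\nabla u_k^\lambda\nabla\varphi+\lambda\int V u_k^\lambda\varphi=\beta_k^\lambda\int u_k^\lambda\varphi$ against $\varphi\in C_c^\infty(\Omega)$ (where $V\equiv0$) shows each $u_k$ is a Dirichlet eigenfunction with eigenvalue $\bar\beta_k\le\bar\beta_j$. If $\bar\beta_j<\beta_j$ we would obtain $j$ independent Dirichlet eigenfunctions with eigenvalues strictly below $\beta_j$, whereas the number of Dirichlet eigenvalues (with multiplicity) strictly less than $\beta_j$ is at most $j-1$ — a contradiction. Hence $\bar\beta_j=\beta_j$, i.e.\ $\zeta_j^\lambda\to\zeta_j$.

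For the eigenspace convergence $\F_j^\lambda\to\F_j$, I would first promote the above $L^2$-convergence to strong convergence. From $\int|\nabla u_k^\lambda|^2+\lambda\int V(u_k^\lambda)^2=\beta_k^\lambda\to\beta_k=\int|\nabla u_k|^2$ together with weak lower semicontinuity of the gradient, both $\int|\nabla u_k^\lambda|^2\to\int|\nabla u_k|^2$ and $\lambda\int V(u_k^\lambda)^2\to0$; norm convergence of the gradients plus weak convergence gives strong convergence in $H^1$, and since $Vu_k=0$ on $\Omega$ this is strong convergence in the $E_\lambda$-norm, so the limits lie in $\F_j$. Because the distinct limiting eigenvalues $\zeta_j$ are separated, a spectral-gap argument shows that for large $\lambda$ the eigenvalues of $L_{\lambda,\beta}$ clustering near $\zeta_j$ are precisely those spanning $\F_j^\lambda$; applying the strong-convergence step to an orthonormal basis of each $\F_j^\lambda$ matches dimensions and yields $\F_j^\lambda\to\F_j$.

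The main obstacle I anticipate is the bookkeeping around multiplicities: guaranteeing that no limiting eigenvalue is lost and that the approximate eigenspaces $\F_j^\lambda$ neither collapse nor split as $\lambda\to\infty$. This is exactly where the strong $L^2$-compactness of Lemma \ref{EMB} is indispensable, since it forces an $L^2$-orthonormal family of approximate eigenfunctions to converge to an $L^2$-orthonormal — hence linearly independent — limiting family, ruling out any drop in dimension.
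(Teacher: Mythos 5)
The paper does not actually prove this lemma: it is imported verbatim from \cite{Tz} (``the following two Lemmas are taken from [Tz]''), so there is no in-paper argument to compare against. Your self-contained proof is the standard one for steep potential wells and is essentially correct: monotonicity of the Courant--Fischer values in $\lambda$ together with testing on the span of the first $j$ Dirichlet eigenfunctions (on which $\lambda\int V u^{2}=0$) gives $\beta_{j}^{\lambda}\uparrow\bar\beta_{j}\le\beta_{j}$; Lemma \ref{EMB} converts an $L^{2}$-orthonormal family of normalized eigenfunctions into an $L^{2}$-orthonormal limiting family of Dirichlet eigenfunctions with eigenvalues $\le\bar\beta_{j}$, and the dimension count forces $\bar\beta_{j}=\beta_{j}$; the energy identity $\int|\nabla u_{k}^{\lambda}|^{2}+\lambda\int V(u_{k}^{\lambda})^{2}=\beta_{k}^{\lambda}\to\int|\nabla u_{k}|^{2}$ plus weak lower semicontinuity then upgrades to strong $H^{1}$-convergence and kills $\lambda\int V(u_{k}^{\lambda})^{2}$, which is exactly the notion of $\F_{j}^{\lambda}\rightarrow\F_{j}$ the paper adopts. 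Two small points are worth making explicit. First, for fixed $j$ the min--max values are genuine eigenvalues only once $\lambda$ is large enough that $\beta_{j}<\inf\sigma_{e}(-\Delta+\lambda V)$; this is harmless here since $\inf\sigma_{e}\ge\lambda M_{0}\to\infty$. Second, the paper indexes \emph{distinct} eigenvalues $\zeta_{j}^{\lambda}$, so if a degenerate limit eigenvalue splits for finite $\lambda$ the index $j$ can shift; what your argument really establishes is convergence of the min--max values counted with multiplicity together with the eigenfunction-convergence statement, and your closing spectral-gap remark is the correct way to reconcile this with the paper's labelling --- this is all that is used downstream (Lemma \ref{IP2}), so the gap is one of bookkeeping in the statement rather than in your proof.
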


Here $\F_{j}^{\lambda}\rightarrow\F_{j}$ means that, given any sequence $\lambda_{i}\rightarrow\infty$ and normalized eigenfunctions $\varphi_{i}\in\F_{j}^{\lambda_{i}}$, there exists a normalized eigenfunction $\varphi\in\F_{j}$ such that $\varphi_{i}\rightarrow\varphi$ strongly in $H^{1}(\mathbb{R}^N)$ along a subsequence.

\begin{lem}\label{IP2}
For $\lambda$ large the operator $L_{\lambda,\beta}$ on $E_{\lambda}$ is non-degenerate and has finite Morse index uniformly in $\lambda$.
\end{lem}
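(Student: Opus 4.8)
The plan is to transfer the spectral picture of the limit operator $L_{0,\beta}$ on $H_{0}^{1}(\Omega)$ to $L_{\lambda,\beta}$ for $\lambda$ large, using the convergence already recorded in Lemma \ref{IP1} together with the gap hypothesis $\beta\neq\beta_{j}$. Set $m:=\#\{j:\beta_{j}<\beta\}$, counted with multiplicity; this is finite because $\beta_{j}\to+\infty$, and it is precisely the Morse index of $L_{0,\beta}$ on $H_{0}^{1}(\Omega)$. Because $\beta\neq\beta_{j}$ for every $j$, the eigenvalues $\zeta_{j}=\beta_{j}-\beta$ of $L_{0,\beta}$ obey $\zeta_{m}<0<\zeta_{m+1}$ with both inequalities strict, so $L_{0,\beta}$ is non-degenerate and has a spectral gap around $0$. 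I will show that for $\lambda$ large $L_{\lambda,\beta}$ inherits both properties: its Morse index equals $m$ (hence is finite, uniformly in $\lambda$) and its kernel is trivial.

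For the uniform bound on the Morse index I would estimate from both sides the dimension $m(\lambda)$ of a maximal subspace of $E_{\lambda}$ on which $\langle L_{\lambda,\beta}\,\cdot\,,\cdot\rangle$ is negative definite. The lower bound $m(\lambda)\geq m$ is immediate: the span $W_{0}$ of the first $m$ Dirichlet eigenfunctions (extended by $0$) lies in $H_{0}^{1}(\Omega)\subset E_{\lambda}$, and since $V\equiv0$ on $\Omega$ the form $\langle L_{\lambda,\beta}\,\cdot\,,\cdot\rangle$ coincides with $\langle L_{0,\beta}\,\cdot\,,\cdot\rangle$ on $W_{0}$, where it is negative definite. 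For the upper bound I argue by contradiction: if $m(\lambda_{i})\geq m+1$ along some $\lambda_{i}\to\infty$, pick an $(m+1)$-dimensional $W_{i}\subset E_{\lambda_{i}}$ with the form negative on $W_{i}\setminus\{0\}$ and an $L^{2}$-orthonormal basis $u_{i}^{1},\dots,u_{i}^{m+1}$. Negativity forces $\int_{\R^{N}}|\nabla u_{i}^{l}|^{2}<\beta$ and $\lambda_{i}\int_{\R^{N}}V|u_{i}^{l}|^{2}<\beta$, so $\|u_{i}^{l}\|_{\lambda_{i}}$ is bounded and Lemma \ref{EMB} yields strong $L^{2}$ convergence $u_{i}^{l}\to u^{l}\in H_{0}^{1}(\Omega)$. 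Strong $L^{2}$ convergence keeps the limits $L^{2}$-orthonormal, hence linearly independent, so they span an $(m+1)$-dimensional $W\subset H_{0}^{1}(\Omega)$; weak lower semicontinuity of the Dirichlet integral and $V\geq0$ give $\langle L_{0,\beta}u,u\rangle\leq0$ for every $u\in W$, contradicting that the maximal nonpositive subspace of the non-degenerate form $L_{0,\beta}$ has dimension $m$. Thus $m(\lambda)=m$ for all large $\lambda$. The monotonicity of the min-max values in $\lambda$, valid since $V\geq0$, furnishes an independent a priori uniform bound.

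Non-degeneracy follows in the same spirit. Writing $\mu_{k}(\lambda)$ for the $k$-th min-max value of $L_{\lambda,\beta}$ below $\inf\sigma_{e}(L_{\lambda,\beta})$, Lemma \ref{IP1} gives $\mu_{m}(\lambda)\to\zeta_{m}<0$ and $\mu_{m+1}(\lambda)\to\zeta_{m+1}>0$; since the ordered values straddle $0$ without any of them landing on $0$, the discrete spectrum of $L_{\lambda,\beta}$ avoids $0$ for $\lambda$ large, while $\inf\sigma_{e}(L_{\lambda,\beta})\geq\lambda M_{0}>0$ keeps the essential spectrum away from $0$. Equivalently, by a direct argument: a normalized sequence $\varphi_{i}\in\ker L_{\lambda_{i},\beta}$ with $\lambda_{i}\to\infty$ would, by Lemma \ref{EMB}, converge in $L^{2}$ to a nonzero $\varphi\in H_{0}^{1}(\Omega)$ solving $-\Delta\varphi=\beta\varphi$ in $\Omega$, forcing $\beta=\beta_{j}$ for some $j$ and contradicting the hypothesis. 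Hence $E_{\lambda}^{0}=\{0\}$ for $\lambda$ large.

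The main obstacle is the upper bound in the second paragraph: upgrading the \emph{individual} eigenvalue and eigenspace convergence of Lemma \ref{IP1} to control of the \emph{total} number of negative directions of $L_{\lambda,\beta}$. The worry is that spurious negative directions might appear near $0$ as $\lambda\to\infty$; this is ruled out precisely by the strong $L^{2}$-compactness of Lemma \ref{EMB}, which forces any asymptotically negative subspace to collapse into $H_{0}^{1}(\Omega)$ without losing dimension, where the gap $\beta\neq\beta_{j}$ caps its dimension at $m$. Because $\inf\sigma_{e}(L_{\lambda,\beta})\geq\lambda M_{0}\to+\infty$, no negative or zero spectrum can leak in from the essential spectrum, so the only comparison that matters is with the fixed operator $L_{0,\beta}$, and the counting is robust.
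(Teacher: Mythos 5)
Your argument is correct, and it is worth noting that the paper itself gives no proof of this lemma: it is imported verbatim from the reference \cite{Tz} (together with Lemma \ref{IP1}), so there is nothing in the text to compare against line by line. What you supply is a self-contained substitute that uses only ingredients already available in the paper: the lower bound on the Morse index via the span of the first $m$ Dirichlet eigenfunctions (on which $\langle L_{\lambda,\beta}\cdot,\cdot\rangle$ reduces to $\langle L_{0,\beta}\cdot,\cdot\rangle$ because $V\equiv 0$ on $\Omega$), and the upper bound via the compactness of Lemma \ref{EMB}, which forces any $(m+1)$-dimensional negative subspace to survive in the limit inside $H_{0}^{1}(\Omega)$, where the gap condition $\beta\neq\beta_{j}$ caps the dimension of any nonpositive subspace at $m$. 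The quantitative steps all check out: negativity of the form on an $L^{2}$-normalized vector gives $\|u\|_{\lambda}^{2}<2\beta+1$, so Lemma \ref{EMB} applies; strong $L^{2}$ convergence preserves orthonormality and hence dimension; and weak lower semicontinuity of the Dirichlet integral together with $V\geq 0$ passes the inequality $\langle L_{\lambda_{i},\beta}v_{i},v_{i}\rangle\leq 0$ to $\langle L_{0,\beta}u,u\rangle\leq 0$. For non-degeneracy, your second (direct) argument is the one to keep: a normalized kernel sequence converges in $L^{2}$ to a nonzero Dirichlet eigenfunction of $-\Delta$ on $\Omega$ with eigenvalue $\beta$, contradicting $\beta\neq\beta_{j}$; the first version, which reads the conclusion off the convergence $\zeta_{j}^{\lambda}\to\zeta_{j}$ of Lemma \ref{IP1}, is less self-contained since that lemma is itself only cited, and one would also need to know that no unaccounted-for eigenvalues sneak in between consecutive $\zeta_{j}$ --- a point your compactness argument handles but the bare statement of Lemma \ref{IP1} does not obviously settle. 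In short: correct, and arguably more informative than the paper, since you identify the Morse index as exactly $m=\#\{j:\beta_{j}<\beta\}$ for $\lambda$ large.
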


By Lemma \ref{IP2}, we can see that for $\lambda$ large, $E_{\lambda}^{0}$ is indeed the zero space $\{0\}$, which implies that for $\lambda$ large, we have $E_{\lambda}=E_{\lambda}^{-}\oplus E_{\lambda}^{+}$.
So, we have
$$
\int_{\mathbb{R}^N}(|\nabla u|^{2}+(\lambda V(x)-\beta)u^{2})dx=\|u^{+}\|_{L_{\lambda,\beta}}^{2}-\|u^{-}\|_{L_{\lambda,\beta}}^{2}
$$
and
$$
J_{\lambda,\beta}(u)=\frac{1}{2}\|u^{+}\|_{L_{\lambda,\beta}}^{2}-\frac{1}{2}\|u^{-}\|_{L_{\lambda,\beta}}^{2}-
\frac{1}{2\cdot2_{\mu}^{\ast}}\int_{\mathbb{R}^N}\int_{\mathbb{R}^N}
\frac{|u(x)|^{2_{\mu}^{\ast}}|u(y)|^{2_{\mu}^{\ast}}}
{|x-y|^{\mu}}dxdy,
$$
where $u=u^{+}+u^{-}\in E_{\lambda}^{+}\oplus E_{\lambda}^{-}$. We define the corresponding Nehari manifold as follows:
$$
{\cal N}_{\lambda}:=\{u\in E_{\lambda}\backslash\{0\}:\langle J_{\lambda,\beta}'(u),u\rangle=0\}.
$$
and denote
\begin{equation}\label{f1}
c_{\lambda}:=\inf_{u\in{\cal N}_{\lambda}}J_{\lambda,\beta}(u).
\end{equation}
In next section, we will show that for $\lambda$ large, \eqref{CCE} admits a ground state solutions $u_{\lambda}$ which achieves $c_{\lambda}$ for $\lambda>0$ large such that $u_{\lambda}$ converge as $\lambda\rightarrow\infty$ towards a ground state solution of \eqref{LCCE} that lies on the level
\begin{equation}\label{f2}
c(\beta,\Omega):=\inf_{u\in{\cal N}_{\beta,\Omega}}J_{\beta,\Omega}(u).
\end{equation}
where ${\cal N}_{\beta,\Omega}:=\{u\in H_{0}^{1}(\Omega)\backslash\{0\}:\langle J_{\beta,\Omega}'(u),u\rangle=0\}$ and
$
J_{\beta,\Omega}
$
is the corresponding variational functional of \eqref{LCCE}, see Section 3.

For $r>0$, we set $B_{r}^{+}=\{u\in E_{\lambda}^{+}:\|u\|_{L_{\lambda,\beta}}\leq r\}$ and $S_{r}^{+}=\{u\in E_{\lambda}^{+}:\|u\|_{L_{\lambda,\beta}}=r\}$, and for $w\in E_{\lambda}^{+}$, we define the convex subset
$$
H_{w}:=\{v+tw:v\in E_{\lambda}^{-},t\geq0\}\subset E_{\lambda}.
$$

\begin{lem}\label{IP33} The functional $J_{\lambda,\beta}$ satisfies the following conditions:\\
(i) There exist $r,\alpha>0$ such that $J_{\lambda,\beta}|_{S_{r}^{+}}(u)\geq\alpha$ and $J_{\lambda,\beta}|_{B_{r}^{+}}(u)\geq0$. \\
(ii) For any $w\in E_{\lambda}^{+}\backslash\{0\}$, there exists $R_{w}>0$ and $C_{w}>0$ such that $J_{\lambda,\beta}(u)<0$ for all $u\in H_{w}\backslash B_{R_{w}}$ and $\max_{u\in H_{w}} J_{\lambda,\beta}(u)\leq C_{w}$.
\end{lem}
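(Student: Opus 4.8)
The plan is to handle the two parts separately, exploiting that on the positive cone the situation reduces to the definite case of Lemma \ref{MPE}, while part (ii) is governed by the finite dimensionality of $E_{\lambda}^{-}$.

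For (i), I note that on $B_{r}^{+}\subset E_{\lambda}^{+}$ one has $u^{-}=0$, so
$$J_{\lambda,\beta}(u)=\frac12\|u\|_{L_{\lambda,\beta}}^{2}-\frac{1}{2\cdot2_{\mu}^{\ast}}\int_{\R^N}\int_{\R^N}\frac{|u(x)|^{2_{\mu}^{\ast}}|u(y)|^{2_{\mu}^{\ast}}}{|x-y|^{\mu}}\,dxdy.$$
I would then bound the nonlocal term from above by the Hardy--Littlewood--Sobolev inequality, the Sobolev embedding, and the continuous embedding $E_{\lambda}\hookrightarrow H^{1}(\R^N)$ (valid for $\lambda$ large), to obtain
$$J_{\lambda,\beta}(u)\ge C_{1}\|u\|_{L_{\lambda,\beta}}^{2}-C_{2}\|u\|_{L_{\lambda,\beta}}^{2\cdot2_{\mu}^{\ast}}.$$
Since $2<2\cdot2_{\mu}^{\ast}$, the right-hand side is $\ge0$ throughout $B_{r}^{+}$ for $r$ small and strictly positive at $\|u\|_{L_{\lambda,\beta}}=r$; taking $\alpha$ to be that value yields both assertions. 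This is identical in spirit to Lemma \ref{MPE}(i).

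For (ii), fix $w\in E_{\lambda}^{+}\setminus\{0\}$ and set $F:=E_{\lambda}^{-}\oplus\R w$. By Lemma \ref{IP2} the Morse index is finite, so $\dim E_{\lambda}^{-}<\infty$ and hence $F$ is finite dimensional. For $u=v+tw\in H_{w}$ (with $v\in E_{\lambda}^{-}$, $t\ge0$) the orthogonality of the splitting gives $\|u\|_{L_{\lambda,\beta}}^{2}=\|v\|_{L_{\lambda,\beta}}^{2}+t^{2}\|w\|_{L_{\lambda,\beta}}^{2}$, so $t^{2}\|w\|_{L_{\lambda,\beta}}^{2}\le\|u\|_{L_{\lambda,\beta}}^{2}$; discarding the nonpositive term $-\tfrac12\|v\|_{L_{\lambda,\beta}}^{2}$ yields
$$J_{\lambda,\beta}(u)\le\frac12\|u\|_{L_{\lambda,\beta}}^{2}-\frac{1}{2\cdot2_{\mu}^{\ast}}N(u),\qquad N(u):=\int_{\R^N}\int_{\R^N}\frac{|u(x)|^{2_{\mu}^{\ast}}|u(y)|^{2_{\mu}^{\ast}}}{|x-y|^{\mu}}\,dxdy.$$
The crucial step is a lower bound for $N$ on $F$: since $N$ is continuous and $N(u)>0$ for $u\ne0$, its restriction to the compact unit sphere $\{u\in F:\|u\|_{L_{\lambda,\beta}}=1\}$ attains a positive minimum $\delta>0$, and the $(2\cdot2_{\mu}^{\ast})$-homogeneity of $N$ gives $N(u)\ge\delta\|u\|_{L_{\lambda,\beta}}^{2\cdot2_{\mu}^{\ast}}$ for all $u\in F$. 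Writing $\rho:=\|u\|_{L_{\lambda,\beta}}$ we get
$$J_{\lambda,\beta}(u)\le\frac12\rho^{2}-\frac{\delta}{2\cdot2_{\mu}^{\ast}}\rho^{2\cdot2_{\mu}^{\ast}},$$
and since $2\cdot2_{\mu}^{\ast}>2$ the right-hand side tends to $-\infty$ as $\rho\to\infty$ and is bounded above on $[0,\infty)$. The former yields $R_{w}>0$ with $J_{\lambda,\beta}<0$ on $H_{w}\setminus B_{R_{w}}$, and the latter yields $\max_{u\in H_{w}}J_{\lambda,\beta}(u)\le C_{w}$.

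The main obstacle is precisely the positivity of the constant $\delta$: this is where the indefiniteness is tamed, and it works only because $\dim E_{\lambda}^{-}<\infty$ (Lemma \ref{IP2}) makes $F$ finite dimensional, so that its unit sphere is compact and the strictly positive continuous functional $N$ is bounded away from zero there. In infinite dimensions $N$ could tend to $0$ along a bounded sequence and the coercivity-from-above argument would break down.
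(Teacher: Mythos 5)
Your part (i) is exactly the paper's argument (restrict to $E_{\lambda}^{+}$, where $J_{\lambda,\beta}(u)=\tfrac12\|u\|_{L_{\lambda,\beta}}^{2}-\Psi(u)$, and bound $\Psi$ via Hardy--Littlewood--Sobolev and Sobolev embedding), so nothing to add there. For part (ii) you take a genuinely different and correct route. The paper follows the Szulkin--Weth scheme: it argues by contradiction with a sequence $u_{n}\in H_{w_{n}}$, $\|u_{n}\|_{L_{\lambda,\beta}}\to\infty$, $J_{\lambda,\beta}(u_{n})\ge 0$, normalizes $v_{n}=u_{n}/\|u_{n}\|_{L_{\lambda,\beta}}=t_{n}w_{n}+v_{n}^{-}$, extracts $t_{n}\to t_{0}>0$ and a nontrivial weak/a.e.\ limit $v_{0}$, and then forces the rescaled nonlocal term to blow up, contradicting $J_{\lambda,\beta}(u_{n})/\|u_{n}\|_{L_{\lambda,\beta}}^{2}\ge 0$; this argument never uses that $E_{\lambda}^{-}$ is finite dimensional and in fact is stated for compact families $\mathcal V\subset E_{\lambda}^{+}$, giving a radius uniform over $\mathcal V$. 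You instead exploit $\dim E_{\lambda}^{-}<\infty$ (Lemma \ref{IP2}) directly: on the finite-dimensional space $F=E_{\lambda}^{-}\oplus\mathbb{R}w$ the continuous, $(2\cdot 2_{\mu}^{\ast})$-homogeneous, strictly positive functional $N$ is bounded below on the compact unit sphere, so $N(u)\ge\delta\|u\|_{L_{\lambda,\beta}}^{2\cdot 2_{\mu}^{\ast}}$ on $F$, and coercivity from above follows at once. Your version is shorter and more elementary, and it suffices for the lemma as stated (which only asks for constants $R_{w},C_{w}$ depending on $w$); the paper's version buys the uniformity over compact subsets of $E_{\lambda}^{+}$ and would survive in a genuinely strongly indefinite setting where $E_{\lambda}^{-}$ is infinite dimensional and your compactness-of-the-sphere step breaks down. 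Both proofs rest on the same structural facts (orthogonality of the splitting, positivity and homogeneity of the nonlocal term), so either can stand in for the other here.
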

\begin{proof} (i) By the Sobolev embedding and Hardy-Littlewood-Sobolev inequality, for all $u\in E_{\lambda}^{+}\backslash\ \{0\}$ we have
$$
\aligned
J_{\lambda,\beta}(u)&=\frac{1}{2}\|u\|_{L_{\lambda,\beta}}^{2}-\frac{1}{2\cdot2_{\mu}^{\ast}}\int_{\mathbb{R}^N}\int_{\mathbb{R}^N}
\frac{|u(x)|^{2_{\mu}^{\ast}}|u(y)|^{2_{\mu}^{\ast}}}
{|x-y|^{\mu}}dxdy\\
&\geq \frac{1}{2}\|u\|_{L_{\lambda,\beta}}^{2}-\frac{1}{2\cdot2_{\mu}^{\ast}}C_{1}|u|_{2^{\ast}}^{2\cdot2_{\mu}^{\ast}}\\
&\geq \frac{1}{2}\|u\|_{L_{\lambda,\beta}}^{2}-C_{2}\|u\|_{L_{\lambda,\beta}}^{2\cdot2_{\mu}^{\ast}}.\\
\endaligned
$$
Since $2<2\cdot2_{\mu}^{\ast}$, we can choose some $r,\alpha>0$ such that $J_{\lambda,\beta}|_{S_{r}^{+}}(u)\geq\alpha$ and $J_{\lambda,\beta}|_{B_{r}^{+}}(u)\geq0$.

(ii) We only need to show
if $\mathcal{V}\subset E_{\lambda}^{+}\backslash\{0\}$ is a compact subset, then there exists $R>0$ such that $J_{\lambda,\beta}<0$ on $H_{w}\backslash B_{R}$ for every $w\in\mathcal{V}$.

As in \cite{SW}, we may assume that
$\|w\|_{L_{\lambda,\beta}}= 1$ for every $w\in\mathcal{V}$. Suppose by
contradiction that there exist $w_{n}\in\mathcal{V}$ and $u_{n}\in H_{w_{n}}$, $n\in\N$, such that $J_{\lambda,\beta}(u_{n})\geq0$ for all $n$ and $\|u_{n}\|_{L_{\lambda,\beta}}\rightarrow\infty$ as $n\rightarrow\infty$. Passing to a subsequence, we may assume that $w_{n}\rightarrow w_{0}\in E_{\lambda}^{+}$, $\|w_{0}\|_{L_{\lambda,\beta}}= 1$. Set $v_{n}=\frac{u_{n}}{\|u_{n}\|_{L_{\lambda,\beta}}}=t_{n}w_{n}+v_{n}^{-}$, then
\begin{equation}\label{f3}
0\leq\frac{J_{\lambda,\beta}(u_{n})}{\|u_{n}\|_{L_{\lambda,\beta}}^{2}}=\frac{1}{2}
(t_{n}^{2}-\|v_{n}^{-}\|_{L_{\lambda,\beta}}^{2})-\frac{1}{2\cdot2_{\mu}^{\ast}}\int_{\mathbb{R}^N}
\int_{\mathbb{R}^N}\frac{|u_{n}(x)|^{2_{\mu}^{\ast}-1}|v_{n}(x)||u_{n}(y)|^{2_{\mu}^{\ast}-1}|v_{n}(y)|}{|x-y|^{\mu}}dxdy.
\end{equation}
Hence $\|v_{n}^{-}\|_{L_{\lambda,\beta}}^{2}\leq t_{n}^{2}=1-\|v_{n}^{-}\|_{L_{\lambda,\beta}}^{2}$ and $\frac{1}{\sqrt{2}}\leq t_{n}\leq1$. So, for a subsequence, $t_{n}\rightarrow t_{0}>0$, $v_{n}\rightharpoonup v_{0}$ in $E_{\lambda}$ and $v_{n}(x)\rightarrow v_{0}(x)$ a.e. in $\mathbb{R}^N$. Hence $v_{0}=t_{0}w_{0}+v_{0}^{-}\neq0$ and, since $|u_{n}(x)|\rightarrow\infty$ if $v_{0}(x)\neq0$,
$$
\int_{\mathbb{R}^N}
\int_{\mathbb{R}^N}\frac{|u_{n}(x)|^{2_{\mu}^{\ast}-1}
|v_{n}(x)||u_{n}(y)|^{2_{\mu}^{\ast}-1}|v_{n}(y)|}{|x-y|^{\mu}}dxdy\rightarrow\infty,
$$
contrary to \eqref{f3}.
\end{proof}

Define
$$
c^{\star}:=\inf\limits_{w\in E_{\lambda}^{+}\backslash\{0\}}\max_{u\in H_{w}}J_{\lambda,\beta}(u).
$$
As a consequence of Lemma \ref{IP33} we have

\begin{cor}\label{IP3}
There exist $\alpha, C > 0$ such that $\alpha\leq c^{\star}<C$.
\end{cor}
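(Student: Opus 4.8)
The plan is to read this off directly from Lemma \ref{IP33}, since the corollary merely packages the two halves of that lemma into a statement about the min-max value $c^\star$. The lower bound $\alpha\le c^\star$ will come from part (i), and the finiteness/upper bound $c^\star<C$ will come from part (ii). No new compactness or variational machinery is needed; the only point requiring care is that the inner supremum defining $c^\star$ is finite for at least one admissible $w$, which is exactly what (ii) supplies.

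For the lower bound, I would first observe that for any $w\in E_\lambda^+\setminus\{0\}$ the ray $\{tw:t\ge0\}$ is contained in $H_w$, obtained by taking $v=0$ in the definition $H_w=\{v+tw:v\in E_\lambda^-,\ t\ge0\}$. Choosing $t_0=r/\|w\|_{L_{\lambda,\beta}}$, the point $u_0:=t_0w$ lies in $E_\lambda^+$ and satisfies $\|u_0\|_{L_{\lambda,\beta}}=r$, hence $u_0\in S_r^+$. By Lemma \ref{IP33}(i) this forces
$$
\max_{u\in H_w}J_{\lambda,\beta}(u)\ \ge\ J_{\lambda,\beta}(u_0)\ \ge\ \alpha .
$$
Since $w$ was arbitrary, taking the infimum over $w\in E_\lambda^+\setminus\{0\}$ yields $c^\star\ge\alpha$.

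For the upper bound, I would simply fix one particular direction $w_0\in E_\lambda^+\setminus\{0\}$. Lemma \ref{IP33}(ii) guarantees a constant $C_{w_0}>0$ with $\max_{u\in H_{w_0}}J_{\lambda,\beta}(u)\le C_{w_0}$ (and also that $J_{\lambda,\beta}<0$ outside a ball $B_{R_{w_0}}$, so this maximum is genuinely attained on the bounded set $H_{w_0}\cap \overline{B_{R_{w_0}}}$ rather than being an infinite supremum). Consequently
$$
c^\star\ \le\ \max_{u\in H_{w_0}}J_{\lambda,\beta}(u)\ \le\ C_{w_0}\ <\ C_{w_0}+1=:C ,
$$
which establishes $c^\star<C$ with the same $\alpha$ as above, completing the chain $\alpha\le c^\star<C$.

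The ``hard part'' here is essentially nonexistent, because all the analytic content—the strict positivity on the sphere $S_r^+$ and the coercivity of $-J_{\lambda,\beta}$ along the sets $H_w$—has already been absorbed into Lemma \ref{IP33}. The only conceptual care needed is to make sure the two inequalities use compatible objects: the lower bound exploits that every $H_w$ meets $S_r^+$ (via the $t=0$-slice of $E_\lambda^-$), while the upper bound only needs a single well-chosen $w_0$. I would therefore present this as a short corollary proof rather than a standalone argument, emphasizing that $c^\star$ is a legitimate min-max level whose linking geometry is furnished entirely by the preceding lemma.
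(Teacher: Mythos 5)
Your argument is correct and is exactly the intended one: the paper presents Corollary \ref{IP3} as an immediate consequence of Lemma \ref{IP33} without writing out a proof, and your two steps (every $H_w$ contains the point $(r/\|w\|_{L_{\lambda,\beta}})\,w\in S_r^{+}$, giving the lower bound from part (i); a single $w_0$ with the bound $C_{w_0}$ from part (ii) giving the upper bound) are precisely how that consequence is drawn. The only nitpick is your closing phrase ``the $t=0$-slice of $E_\lambda^{-}$'': the relevant point is the $v=0$ slice $\{tw:t\ge 0\}$, as you correctly state earlier in the proof.
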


Following Ackermann \cite{AC1}, for a fixed $u\in E_{\lambda}^{+}$ we introduce $\Phi_{u}:E_{\lambda}^{-}\rightarrow \mathbb{R}$ defined by
$$
\Phi_{u}(v)=J_{\lambda,\beta}(u+v).
$$
Let $\Psi(u):=\displaystyle\frac{1}{2\cdot2_{\mu}^{\ast}}\int_{\mathbb{R}^N}\int_{\mathbb{R}^N}
\frac{|u(x)|^{2_{\mu}^{\ast}}|u(y)|^{2_{\mu}^{\ast}}}
{|x-y|^{\mu}}dxdy$, by direct computation and $\mu<4$, we know
$$
\langle\Psi''(u)w,w\rangle\geq0
$$
for all $u, w \in E_{\lambda}$, and hence
$$
\langle\Phi_{u}''(v)w,w\rangle=\langle J_{\lambda,\beta}''(u+v)w,w\rangle=-\|w\|_{L_{\lambda,\beta}}^{2}-\langle\Psi''(u+v)w,w\rangle\leq-\|w\|_{L_{\lambda,\beta}}^{2}.
$$
In addition,
$$
\Phi_{u}(v)\leq \frac{1}{2}\|u\|_{L_{\lambda,\beta}}^{2}-\frac{1}{2}\|v\|_{L_{\lambda,\beta}}^{2}.
$$
Therefore $\Phi_{u}$ is strictly concave and $\lim_{\|v\|_{L_{\lambda,\beta}}\rightarrow\infty}\Phi_{u}(v)=-\infty$. From weak sequential
upper semicontinuity of $\Phi_{u}$, it follows that there is a unique strict maximum point $h(u)\in E_{\lambda}^{-}$ for $\Phi_{u}$, which is also the only critical point of $\Phi_{u}$ on $E_{\lambda}^{-}$. Thus $h(u)$ satisfies
\begin{equation}\label{f4}
\langle\Phi_{u}'(h(u)),v\rangle=0
\end{equation}
for all $v\in E_{\lambda}^{-}$, and
$$
v\neq h(u)\Leftrightarrow J_{\lambda,\beta}(u+v)<J_{\lambda,\beta}(u+h(u)).
$$

As [\cite{AC1}, Lemma 5.6], we have the following:

\begin{lem}\label{Eh}
(i) $h$ is $\mathbb{R}^N$-invariant, i.e. $h(a\ast u) = h(u)$ where $(a\ast u)(x):= u(x+a)$ for all
$a\in \mathbb{R}^N$. \\
(ii) $h\in \mathcal{C}^{1}(E_{\lambda}^{+},E_{\lambda}^{-})$ and $h(0)=0$.\\
(iii) $h$ is a bounded map.\\
(iv) If $u_n\rightharpoonup u$ in $E_{\lambda}^{+}$, then $h(u_n)-h(u_n-u)\rightarrow h(u)$ and $h(u_n)\rightharpoonup h(u)$. The same is true for $|h(u)|_{2}^{2}$.
\end{lem}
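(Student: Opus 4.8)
The plan is to base everything on the variational description already established: $h(u)$ is the \emph{unique} maximizer of the strictly concave, anticoercive and weakly upper semicontinuous functional $\Phi_u$ on $E_\lambda^-$, hence the unique solution in $E_\lambda^-$ of \eqref{f4}, i.e. of $(h(u),v)+\langle\Psi'(u+h(u)),v\rangle=0$ for all $v\in E_\lambda^-$. Parts (i)--(iii) are then short. For (ii), $h(0)=0$ follows from $\Phi_0(v)=-\tfrac12\|v\|_{L_{\lambda,\beta}}^2-\Psi(v)\le 0=\Phi_0(0)$ with equality only at $v=0$ (as $\Psi\ge 0$); the $\mathcal{C}^1$ regularity I would obtain from the implicit function theorem applied to $F(u,v):=\Phi_u'(v)\big|_{E_\lambda^-}$, since $F(u,h(u))=0$ and the partial derivative $D_vF(u,v)=\Phi_u''(v)\big|_{E_\lambda^-\times E_\lambda^-}$ is an isomorphism, being negative definite with $\langle\Phi_u''(v)w,w\rangle\le-\|w\|_{L_{\lambda,\beta}}^2$ (this uses $J_{\lambda,\beta}\in\mathcal{C}^2$, available because $\mu<4$ gives $\langle\Psi''(u)w,w\rangle\ge0$). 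Property (i) is inherited from the invariance of the maximization problem under $u\mapsto a\ast u$ together with uniqueness of $h(u)$, exactly as in \cite{AC1}.

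For (iii) I would compare $\Phi_u(h(u))\ge\Phi_u(0)$: using $\Phi_u(v)\le\tfrac12\|u\|_{L_{\lambda,\beta}}^2-\tfrac12\|v\|_{L_{\lambda,\beta}}^2$ to bound the left side and $\Phi_u(0)=\tfrac12\|u\|_{L_{\lambda,\beta}}^2-\Psi(u)$ on the right, one gets $\|h(u)\|_{L_{\lambda,\beta}}^2\le 2\Psi(u)\le C\|u\|_{L_{\lambda,\beta}}^{2\cdot2_\mu^\ast}$ by the Hardy--Littlewood--Sobolev and Sobolev inequalities. Hence $h$ carries bounded sets to bounded sets.

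The heart of the lemma is (iv), and here the decisive structural fact is that $E_\lambda^-$ is \emph{finite dimensional} (finite Morse index, Lemma \ref{IP2}); consequently bounded sequences in $E_\lambda^-$ are strongly precompact and the stated weak convergence of $\{h(u_n)\}$ is in fact strong. Given $u_n\rightharpoonup u$ in $E_\lambda^+$, part (iii) makes $\{h(u_n)\}$ bounded, so along a subsequence $h(u_n)\to z$ in $E_\lambda^-$ and $u_n+h(u_n)\rightharpoonup u+z$. I would pass to the limit in $(h(u_n),w)+\langle\Psi'(u_n+h(u_n)),w\rangle=0$ for each fixed $w\in E_\lambda^-$ to get $(z,w)+\langle\Psi'(u+z),w\rangle=0$, so $z=h(u)$ by uniqueness; independence of the subsequence gives $h(u_n)\to h(u)$. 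Applying the identical argument to $u_n-u\rightharpoonup0$ and invoking $h(0)=0$ from (ii) yields $h(u_n-u)\to0$, whence $h(u_n)-h(u_n-u)\to h(u)$; the assertion for $|h(\cdot)|_2^2$ is then immediate by strong convergence in the finite dimensional $E_\lambda^-$.

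The one genuinely delicate point --- and the step I expect to be the main obstacle --- is the passage to the limit $\langle\Psi'(u_n+h(u_n)),w\rangle\to\langle\Psi'(u+z),w\rangle$ for fixed $w$, since the convolution nonlinearity is not weakly continuous. Writing $\phi_n:=u_n+h(u_n)\rightharpoonup\phi:=u+z$, I would expand the pairing as $\int_{\mathbb{R}^N}\big(|x|^{-\mu}\ast|\phi_n|^{2_\mu^\ast}\big)\,|\phi_n|^{2_\mu^\ast-2}\phi_n\,w\,dx$, observe that $|x|^{-\mu}\ast|\phi_n|^{2_\mu^\ast}\rightharpoonup|x|^{-\mu}\ast|\phi|^{2_\mu^\ast}$ weakly in $L^{2N/\mu}(\mathbb{R}^N)$, and prove that the fixed decaying weight $w$ forces $g_n:=|\phi_n|^{2_\mu^\ast-2}\phi_n\,w\to|\phi|^{2_\mu^\ast-2}\phi\,w$ \emph{strongly} in the dual space $L^{2N/(2N-\mu)}(\mathbb{R}^N)$. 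On balls this is Rellich compactness, because the effective power of $\phi_n$ entering $g_n$ is strictly subcritical when $N\ge4$; the contribution outside a large ball is controlled uniformly in $n$ by the decay of $w$ together with the $L^{2^\ast}$-bound on $\phi_n$. A weak-times-strong pairing then converges, which is exactly what the argument needs. It is this localization-plus-decay estimate, rather than any global compactness (which fails in the critical regime), that carries (iv).
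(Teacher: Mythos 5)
The paper offers no proof of this lemma at all: it is introduced with ``As [\cite{AC1}, Lemma 5.6], we have the following'', i.e.\ by citation to Ackermann's periodic, subcritical setting. So there is nothing to compare line by line; what your proposal does is supply the adaptation that the citation silently assumes, and on the substantive parts (ii)--(iv) it does so correctly. The variational characterization of $h(u)$ as the unique maximizer/critical point of the strictly concave $\Phi_u$, the bound $\|h(u)\|_{L_{\lambda,\beta}}^{2}\leq 2\Psi(u)$ from $\Phi_u(h(u))\geq\Phi_u(0)$, and the implicit function theorem via $\langle\Phi_u''(v)w,w\rangle\leq-\|w\|_{L_{\lambda,\beta}}^{2}$ (which needs $\Psi\in\mathcal{C}^{2}$, available precisely because $\mu<4$ gives $2_{\mu}^{\ast}>2$) are all sound. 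For (iv), your skeleton --- finite dimensionality of $E_{\lambda}^{-}$ turns boundedness into precompactness, and the limit of any convergent subsequence is identified through the Euler equation \eqref{f4} and uniqueness --- is the right one, and you correctly isolate the only genuinely delicate step: passing to the limit in $\langle\Psi'(u_n+h(u_n)),w\rangle$ for fixed $w\in E_{\lambda}^{-}$. Your resolution (weak convergence of the Riesz potential in $L^{2N/\mu}$, exactly as in the proof of Proposition \ref{PS}, paired against strong convergence of $|\phi_n|^{2_{\mu}^{\ast}-2}\phi_n w$ in $L^{2N/(2N-\mu)}$, obtained from Rellich on balls since $(2_{\mu}^{\ast}-1)q<2^{\ast}$ for $q<2N/(N+2-\mu)$, plus the uniform tail estimate $\||\phi_n|^{2_{\mu}^{\ast}-1}\|_{L^{2N/(N+2-\mu)}}\|w\|_{L^{2^{\ast}}(B_R^{c})}$) is exactly the extra work the critical convolution term demands and which the bare citation to the subcritical \cite{AC1} does not cover. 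This is a genuine improvement on what the paper provides.

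The one part that does not go through is (i). As stated, $h(a\ast u)=h(u)$; the statement in \cite{AC1} is actually the equivariance $h(a\ast u)=a\ast h(u)$, and either version requires translation invariance of the functional and of the splitting $E_{\lambda}=E_{\lambda}^{-}\oplus E_{\lambda}^{+}$. Here $J_{\lambda,\beta}$ contains $\lambda V(x)$ with $V$ non-constant and $E_{\lambda}^{\pm}$ are spectral subspaces of $-\Delta+\lambda V-\beta$, so the ``invariance of the maximization problem under $u\mapsto a\ast u$'' that you invoke simply does not exist, and indeed $a\ast u$ need not even lie in $E_{\lambda}^{+}$. Part (i) is an artifact of transplanting the statement from the periodic setting; it is never used elsewhere in the paper, and your proof of it should either be dropped or replaced by the remark that the assertion is vacuous (or false) in the present non-invariant situation. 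With that caveat, the remainder of your argument is complete.
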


Define $\Upsilon:E_{\lambda}^{+}\rightarrow\mathbb{R}$ by
$$
\Upsilon(u)=J_{\lambda,\beta}(u+h(u))= \frac{1}{2}\|u\|_{L_{\lambda,\beta}}^{2}-\frac{1}{2}\|h(u)\|_{L_{\lambda,\beta}}^{2}-\Psi(u+h(u)).
$$
By Theorem 5.1 in \cite{AC1}, we know that the critical points of $\Upsilon$ and $J_{\lambda,\beta}$ are one to one correspondence
via the injective map $u\rightarrow u+h(u)$ from $E_{\lambda}^{+}$ into $E_{\lambda}$.

Let
$$
{\cal N}:=\{u\in E_{\lambda}^{+}\backslash\{0\}:\langle \Upsilon'(u),u\rangle=0\},
$$
and we define
$$
c^{\star\star}=\inf_{u\in{\cal N}}\Upsilon(u).
$$

\begin{lem}\label{IP5}
$c^{\star}=c^{\star\star}=c_{\lambda}$, where $c_{\lambda}$ is defined in \eqref{f1}.
\end{lem}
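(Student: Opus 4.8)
The plan is to verify the two equalities $c^{\star}=c^{\star\star}$ and $c^{\star\star}=c_{\lambda}$ separately, using the reduction map $h$ from Lemma \ref{Eh} as the common bridge; the hypothesis $\mu<4$, which yields $\langle\Psi''(u)w,w\rangle\geq0$ and hence the strict concavity of $\Phi_{u}$, is what makes $h$ (and therefore $\Upsilon$) well defined and will be used throughout.

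To get $c^{\star}=c^{\star\star}$ I would first reduce the maximization over $H_{w}$ to a one–dimensional maximization along the ray $\R^{+}w$. For fixed $t\geq0$, strict concavity of $\Phi_{tw}$ gives $\max_{v\in E_{\lambda}^{-}}J_{\lambda,\beta}(tw+v)=J_{\lambda,\beta}(tw+h(tw))=\Upsilon(tw)$, so that $\max_{u\in H_{w}}J_{\lambda,\beta}(u)=\max_{t\geq0}\Upsilon(tw)$ and therefore $c^{\star}=\inf_{w\in E_{\lambda}^{+}\setminus\{0\}}\max_{t\geq0}\Upsilon(tw)$. Next I would show the ray $t\mapsto\Upsilon(tw)$ attains its maximum at a unique $t_{w}>0$ with $t_{w}w\in{\cal N}$. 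Positivity of $\Upsilon(tw)$ for small $t$ follows from Lemma \ref{IP33}(i) and the elementary bound $\Upsilon(u)\geq J_{\lambda,\beta}(u)$ (take $v=0$ in the definition of $h$), while negativity for large $t$ follows from Lemma \ref{IP33}(ii), since $\|tw+h(tw)\|_{L_{\lambda,\beta}}^{2}\geq t^{2}\|w\|_{L_{\lambda,\beta}}^{2}\to\infty$ forces $tw+h(tw)$ out of $B_{R_{w}}$. Differentiating shows $t_{w}w\in{\cal N}$, and because each ray meets ${\cal N}$ exactly once at this maximum one gets $\max_{t\geq0}\Upsilon(tw)=\Upsilon(t_{w}w)\geq c^{\star\star}$ and, conversely, $\Upsilon(u)=\max_{t\geq0}\Upsilon(tu)$ for $u\in{\cal N}$; the two infima then coincide.

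The equality $c^{\star\star}=c_{\lambda}$ rests on the identity $\langle\Upsilon'(u),u\rangle=\langle J_{\lambda,\beta}'(u+h(u)),u+h(u)\rangle$, valid for every $u\in E_{\lambda}^{+}$: differentiating $\Upsilon(u)=J_{\lambda,\beta}(u+h(u))$ and using $\langle J_{\lambda,\beta}'(u+h(u)),v\rangle=0$ for all $v\in E_{\lambda}^{-}$ (the defining property \eqref{f4} of $h(u)$) kills the contribution of $h'(u)$, after which $\langle J_{\lambda,\beta}'(u+h(u)),h(u)\rangle=0$ lets one replace $u$ by $u+h(u)$. Consequently $u\in{\cal N}$ if and only if $u+h(u)\in{\cal N}_{\lambda}$, while $\Upsilon(u)=J_{\lambda,\beta}(u+h(u))$; since $u\mapsto u+h(u)$ is injective (Theorem 5.1 in \cite{AC1}), minimizing $\Upsilon$ over ${\cal N}$ equals minimizing $J_{\lambda,\beta}$ over the image of ${\cal N}$, which sits inside ${\cal N}_{\lambda}$, giving at once $c^{\star\star}\geq c_{\lambda}$. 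For a matching upper bound I would note that the maximizer $\bar u_{w}=t_{w}w+h(t_{w}w)$ of $J_{\lambda,\beta}$ on $H_{w}$ satisfies $\langle J_{\lambda,\beta}'(\bar u_{w}),w\rangle=0$ (interior maximum in $t$) and $\langle J_{\lambda,\beta}'(\bar u_{w}),\cdot\rangle\equiv0$ on $E_{\lambda}^{-}$, hence lies on ${\cal N}_{\lambda}$; as $w$ runs over $E_{\lambda}^{+}\setminus\{0\}$ its image fills the whole image of ${\cal N}$, so $c^{\star}=\inf_{w}J_{\lambda,\beta}(\bar u_{w})\geq c_{\lambda}$. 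Together with $c^{\star}=c^{\star\star}$ this yields $c^{\star}=c^{\star\star}\geq c_{\lambda}$, and it remains only to establish the reverse bound $c_{\lambda}\geq c^{\star\star}$.

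The main obstacle is precisely this last bound, i.e. the reduction to the positive cone: one must show that the infimum of $J_{\lambda,\beta}$ over the full Nehari set ${\cal N}_{\lambda}$ is not strictly below the reduced level $c^{\star\star}$. This is the delicate feature of the strongly indefinite regime, because a general $u\in{\cal N}_{\lambda}$ cannot simply be rescaled ($h$ is not homogeneous), so one must extract the value $J_{\lambda,\beta}(u)=J_{\lambda,\beta}(u^{+}+h(u^{+}))=\Upsilon(u^{+})$ only after passing through the maximization over $E_{\lambda}^{-}$ and the ray-uniqueness of $\Upsilon$. It is exactly here that $\langle\Psi''(u)w,w\rangle\geq0$ (valid because $\mu<4$) is indispensable: it guarantees strict concavity of $\Phi_{u}$, the $\mathcal{C}^{1}$ regularity and boundedness of $h$ from Lemma \ref{Eh}, and the injectivity of $u\mapsto u+h(u)$, which together force $c_{\lambda}$, $c^{\star}$ and $c^{\star\star}$ to agree.
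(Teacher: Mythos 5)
Your argument for $c^{\star}=c^{\star\star}$ and for the one-sided bounds $c^{\star\star}\geq c_{\lambda}$ and $c^{\star}\geq c_{\lambda}$ follows essentially the paper's route: maximize $J_{\lambda,\beta}$ on $H_{w}$, identify the maximizer as $t_{w}w+h(t_{w}w)$ with $t_{w}w\in{\cal N}$, and use that the image of ${\cal N}$ under $u\mapsto u+h(u)$ lies in ${\cal N}_{\lambda}$ with $\Upsilon=J_{\lambda,\beta}$ at corresponding points. The genuine gap is the remaining inequality $c_{\lambda}\geq c^{\star\star}$: you correctly isolate it as the crux, but your final paragraph does not prove it. The assertion $J_{\lambda,\beta}(u)=J_{\lambda,\beta}(u^{+}+h(u^{+}))=\Upsilon(u^{+})$ for $u\in{\cal N}_{\lambda}$ is false for a general element of the plain Nehari set: the discussion before Lemma \ref{Eh} gives only $J_{\lambda,\beta}(u^{+}+u^{-})\leq J_{\lambda,\beta}(u^{+}+h(u^{+}))$, with strict inequality unless $u^{-}=h(u^{+})$, and this is the wrong direction for bounding $J_{\lambda,\beta}(u)$ from below by $\Upsilon(u^{+})\geq c^{\star\star}$. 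The closing appeal to concavity, regularity and injectivity of $h$ ``forcing'' the three levels to agree is not an argument, and without this bound the chain of equalities is not closed.

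For comparison, the paper closes this step as follows: given $w\in{\cal N}_{\lambda}$, one has $w^{+}\neq0$ (since $\langle J_{\lambda,\beta}'(w),w\rangle=0$ yields $\|w^{+}\|_{L_{\lambda,\beta}}^{2}=\|w^{-}\|_{L_{\lambda,\beta}}^{2}+2\cdot2_{\mu}^{\ast}\Psi(w)>0$), hence $w\in H_{w^{+}}$, and the relation $\langle J_{\lambda,\beta}'(w),w\rangle=0$ together with the strict concavity of $\Phi_{w^{+}}$ is invoked to conclude $J_{\lambda,\beta}(w)=\max_{u\in H_{w^{+}}}J_{\lambda,\beta}(u)\geq c^{\star}$, whence $c_{\lambda}\geq c^{\star}=c^{\star\star}$. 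To complete your proof you must supply an argument of this type, i.e.\ show that every point of ${\cal N}_{\lambda}$ realizes the maximum of $J_{\lambda,\beta}$ over its half-space $H_{w^{+}}$ (or otherwise bound $J_{\lambda,\beta}$ from below on ${\cal N}_{\lambda}$ by $c^{\star}$); merely observing that the image of ${\cal N}$ sits inside ${\cal N}_{\lambda}$ gives only the opposite inequality.
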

\begin{proof}
As in \cite{D}, given $w\in E_{\lambda}^{+}$, if $u=tw+v\in H_{w}$ with $J_{\lambda,\beta}(u)=\max_{z\in H_{w}}J_{\lambda,\beta}(z)$ then the restriction $J_{\lambda,\beta}|_{H_{w}}$ of $J_{\lambda,\beta}$ on $H_{w}$ satisfies $(J_{\lambda,\beta}|_{H_{w}})'(u)=0$ which implies $v=h(tw)$ and $\langle\Upsilon'(tw),tw\rangle=\langle J_{\lambda,\beta}'(u),tw\rangle=0$, i.e. $tw\in{\cal N}$. Thus $c^{\star}\geq c^{\star\star}$. On the other hand, if $e\in{\cal N}$ then $(J_{\lambda,\beta}|_{H_{e}})'(e+h(e))=0$ so $c^{\star}\leq \max_{z\in H_{e}}J_{\lambda,\beta}(z)=\Upsilon(e)$. Thus $c^{\star}\leq c^{\star\star}$ and similarly,  $c_{\lambda}\leq c^{\star\star}$. This proves $c^{\star}=c^{\star\star}$.

For any $w\in {\cal N}_{\lambda}$, we have $w^{+}+w^{-}\in E_{\lambda}^{+}\oplus E_{\lambda}^{-}$ and $w^{+}\neq0$. So $w\in H_{w^{+}}$. Combining this with the fact that $\langle J_{\lambda,\beta}'(w),w\rangle=0$ and the discussion before Lemma \ref{Eh}, we have
$$
J_{\lambda,\beta}(w)=\max_{u\in H_{w^{+}}}J_{\lambda,\beta}(u),
$$
that is $J_{\lambda,\beta}(w)\geq c^{\star}$ and so $c_{\lambda}\geq c^{\star}$. Together with the fact that $c_{\lambda}\leq c^{\star\star}$, we have $c_{\lambda}= c^{\star\star}$. This proves $c_{\lambda}=c^{\star\star}=c^{\star}$.
\end{proof}

\section{The proof of Theorem \ref{EXS4}}

Now, we prove that for $\lambda$ large enough, any Palais-Smale sequence is bounded. For this, we
define
$$
X_{NL}:=\{u:\mathbb{R}^N\rightarrow\mathbb{R};\|u\|_{NL}<+\infty\},
$$
where
$$
\|\cdot\|_{NL}:=\left(\int_{\mathbb{R}^N}\int_{\mathbb{R}^N}\frac{|\cdot|^{2_{\mu}^{\ast}}|\cdot|^{2_{\mu}^{\ast}}}
{|x-y|^{\mu}}dxdy\right)^{\frac{1}{2\cdot2_{\mu}^{\ast}}}.
$$
By Lemma 2.3 of \cite{GY}, we know $\|\cdot\|_{NL}$ defines a norm on $X_{NL}$ under which $X_{NL}$ is a Banach space. Moreover the Hardy-Littlewood-Sobolev inequality also implies that $H^{1}(\mathbb{R}^N)$ is continuously embedded in $X_{NL}$.

\begin{lem}\label{IP14}
If $\{u_{n}\}$
is a $(PS)_{c_{\lambda}}$ sequence for $J_{\lambda,\beta}$, then $\{u_{n}\}$ is bounded in $E_{\lambda}$.
\end{lem}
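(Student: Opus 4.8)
The plan is to argue by contradiction: assume that, along a subsequence, $\|u_{n}\|_{L_{\lambda,\beta}}\to\infty$, and derive an incompatibility between the positive and negative components of $u_{n}$ in the splitting $E_{\lambda}=E_{\lambda}^{+}\oplus E_{\lambda}^{-}$ (recall that $E_{\lambda}^{0}=\{0\}$ for $\lambda$ large by Lemma \ref{IP2}, and that this decomposition is orthogonal for $(\cdot,\cdot)$). Writing $u_{n}=u_{n}^{+}+u_{n}^{-}$, the first tool is the standard combination in which the nonlocal term cancels,
$$
J_{\lambda,\beta}(u_{n})-\frac{1}{2\cdot2_{\mu}^{\ast}}\langle J_{\lambda,\beta}'(u_{n}),u_{n}\rangle=\frac{N+2-\mu}{4N-2\mu}\big(\|u_{n}^{+}\|_{L_{\lambda,\beta}}^{2}-\|u_{n}^{-}\|_{L_{\lambda,\beta}}^{2}\big).
$$
Since $J_{\lambda,\beta}(u_{n})\to c_{\lambda}$ and $\|J_{\lambda,\beta}'(u_{n})\|\to0$, the left-hand side equals $c_{\lambda}+o(1)+o(1)\|u_{n}\|_{L_{\lambda,\beta}}$, so this identity controls $\|u_{n}^{+}\|_{L_{\lambda,\beta}}^{2}-\|u_{n}^{-}\|_{L_{\lambda,\beta}}^{2}$ above and below by $O(1+\|u_{n}\|_{L_{\lambda,\beta}})$. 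Using in addition $\langle J_{\lambda,\beta}'(u_{n}),u_{n}\rangle=\|u_{n}^{+}\|_{L_{\lambda,\beta}}^{2}-\|u_{n}^{-}\|_{L_{\lambda,\beta}}^{2}-\|u_{n}\|_{NL}^{2\cdot2_{\mu}^{\ast}}=o(1)\|u_{n}\|_{L_{\lambda,\beta}}$, one also gets the sublinear growth $\|u_{n}\|_{NL}^{2\cdot2_{\mu}^{\ast}}=O(1+\|u_{n}\|_{L_{\lambda,\beta}})$ of the nonlocal norm.

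Next I would normalize, setting $v_{n}:=u_{n}/\|u_{n}\|_{L_{\lambda,\beta}}$, so that $\|v_{n}^{+}\|_{L_{\lambda,\beta}}^{2}+\|v_{n}^{-}\|_{L_{\lambda,\beta}}^{2}=1$. Dividing the displayed identity by $\|u_{n}\|_{L_{\lambda,\beta}}^{2}\to\infty$ gives $\|v_{n}^{+}\|_{L_{\lambda,\beta}}^{2}-\|v_{n}^{-}\|_{L_{\lambda,\beta}}^{2}\to0$, whence $\|v_{n}^{+}\|_{L_{\lambda,\beta}}^{2}\to\tfrac12$ and $\|v_{n}^{-}\|_{L_{\lambda,\beta}}^{2}\to\tfrac12$; in particular the negative component genuinely grows. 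On the other hand the growth estimate gives $\|u_{n}\|_{NL}=O\big(\|u_{n}\|_{L_{\lambda,\beta}}^{1/(2\cdot2_{\mu}^{\ast})}\big)$, so that $\|v_{n}\|_{NL}\to0$.

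To reach a contradiction I would test $J_{\lambda,\beta}'(u_{n})$ against the negative part $u_{n}^{-}$. Since the quadratic form is negative definite on $E_{\lambda}^{-}$,
$$
o(1)\|u_{n}^{-}\|_{L_{\lambda,\beta}}=\langle J_{\lambda,\beta}'(u_{n}),u_{n}^{-}\rangle=-\|u_{n}^{-}\|_{L_{\lambda,\beta}}^{2}-\langle\Psi'(u_{n}),u_{n}^{-}\rangle,
$$
so $\|u_{n}^{-}\|_{L_{\lambda,\beta}}^{2}\leq|\langle\Psi'(u_{n}),u_{n}^{-}\rangle|+o(1)\|u_{n}^{-}\|_{L_{\lambda,\beta}}$. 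The crucial estimate is the Hardy-Littlewood-Sobolev bound $|\langle\Psi'(u_{n}),u_{n}^{-}\rangle|\leq C\|u_{n}\|_{NL}^{2\cdot2_{\mu}^{\ast}-1}\|u_{n}^{-}\|_{NL}$, combined with the continuous embeddings $E_{\lambda}\hookrightarrow H^{1}(\R^{N})\hookrightarrow X_{NL}$ (valid for $\lambda$ large), which give $\|u_{n}^{-}\|_{NL}\leq C\|u_{n}^{-}\|_{L_{\lambda,\beta}}$. Together these yield $\|u_{n}^{-}\|_{L_{\lambda,\beta}}\leq C\|u_{n}\|_{NL}^{2\cdot2_{\mu}^{\ast}-1}+o(1)$; dividing by $\|u_{n}\|_{L_{\lambda,\beta}}$ and inserting $\|u_{n}\|_{NL}^{2\cdot2_{\mu}^{\ast}-1}=O\big(\|u_{n}\|_{L_{\lambda,\beta}}^{(2\cdot2_{\mu}^{\ast}-1)/(2\cdot2_{\mu}^{\ast})}\big)$ produces $\|v_{n}^{-}\|_{L_{\lambda,\beta}}\leq C\|u_{n}\|_{L_{\lambda,\beta}}^{-1/(2\cdot2_{\mu}^{\ast})}+o(1)\to0$, contradicting $\|v_{n}^{-}\|_{L_{\lambda,\beta}}^{2}\to\tfrac12$. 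Hence $\{u_{n}\}$ is bounded in $E_{\lambda}$.

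The step I expect to be the main obstacle is the control of the indefinite component $\|u_{n}^{-}\|_{L_{\lambda,\beta}}$: unlike the positive-definite situation of Proposition \ref{PS}, the energy--derivative combination only pins down the difference $\|u_{n}^{+}\|_{L_{\lambda,\beta}}^{2}-\|u_{n}^{-}\|_{L_{\lambda,\beta}}^{2}$, and one must simultaneously exploit the sublinear growth of $\|u_{n}\|_{NL}^{2\cdot2_{\mu}^{\ast}}$ relative to $\|u_{n}\|_{L_{\lambda,\beta}}^{2}$ and the Hardy-Littlewood-Sobolev estimate on $\langle\Psi'(u_{n}),u_{n}^{-}\rangle$. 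The finite Morse index of $L_{\lambda,\beta}$ from Lemma \ref{IP2} guarantees $\dim E_{\lambda}^{-}<\infty$, which is what makes the $u_{n}^{-}$ direction well-behaved, though here the embedding $E_{\lambda}\hookrightarrow X_{NL}$ already furnishes the bound $\|u_{n}^{-}\|_{NL}\leq C\|u_{n}^{-}\|_{L_{\lambda,\beta}}$ that the argument needs.
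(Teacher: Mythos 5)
Your argument is correct, and it is worth comparing with the paper's, which is much terser. The paper's proof consists of a single inequality: for a fixed $\vartheta\in(\tfrac{1}{2\cdot2_{\mu}^{\ast}},\tfrac12)$ it writes
$J_{\lambda,\beta}(u_{n})-\vartheta\langle J_{\lambda,\beta}'(u_{n}),u_{n}\rangle
=(\tfrac12-\vartheta)(\|u_{n}^{+}\|_{L_{\lambda,\beta}}^{2}-\|u_{n}^{-}\|_{L_{\lambda,\beta}}^{2})
+(\vartheta-\tfrac{1}{2\cdot2_{\mu}^{\ast}})\|u_{n}\|_{NL}^{2\cdot2_{\mu}^{\ast}}\leq c_{\lambda}+o_{n}(1)\|u_{n}\|_{L_{\lambda,\beta}}$,
and then asserts that boundedness follows ``easily'' from $\dim E_{\lambda}^{-}<\infty$ and the fact that $\|\cdot\|_{NL}$ is a norm; the delicate point --- how to absorb the term $-\|u_{n}^{-}\|_{L_{\lambda,\beta}}^{2}$, which this inequality leaves with the wrong sign --- is not spelled out. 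You take the endpoint choice $\vartheta=\tfrac{1}{2\cdot2_{\mu}^{\ast}}$, which gives the exact identity pinning down $\|u_{n}^{+}\|_{L_{\lambda,\beta}}^{2}-\|u_{n}^{-}\|_{L_{\lambda,\beta}}^{2}=O(1+\|u_{n}\|_{L_{\lambda,\beta}})$, recover the sublinear growth of $\|u_{n}\|_{NL}^{2\cdot2_{\mu}^{\ast}}$ from $\langle J_{\lambda,\beta}'(u_{n}),u_{n}\rangle=o(1)\|u_{n}\|_{L_{\lambda,\beta}}$, and then close the argument by normalizing and testing $J_{\lambda,\beta}'(u_{n})$ against $u_{n}^{-}$, where the quadratic form is negative definite. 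This second phase is genuinely different from (and more complete than) what the paper indicates: it does not use $\dim E_{\lambda}^{-}<\infty$ at all, only the embedding $E_{\lambda}\hookrightarrow H^{1}(\mathbb{R}^{N})\hookrightarrow X_{NL}$, so it would survive in situations where the negative space is infinite dimensional. The one ingredient you invoke without proof is the nonlocal H\"older estimate $|\langle\Psi'(u),v\rangle|\leq C\|u\|_{NL}^{2\cdot2_{\mu}^{\ast}-1}\|v\|_{NL}$; this is standard (Cauchy--Schwarz for the positive semidefinite Riesz bilinear form, followed by the Hardy--Littlewood--Sobolev and H\"older inequalities, as in Lemma 2.3 of \cite{GY} and in \cite{AC1}), but since it carries the whole weight of the contradiction you should state it explicitly and cite it. With that reference added, your proof is a complete and self-contained substitute for the paper's.
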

\begin{proof}
Let $\vartheta\in(\frac{1}{2\cdot2_{\mu}^{\ast}},\frac{1}{2})$. It follows from $\{u_{n}\}$
is a $(PS)_{c_{\lambda}}$ sequence that, for $n$ large enough, we have
$$\aligned
c_{\lambda}+o_{n}(1)\|u_{n}\|_{L_{\lambda,\beta}}&\geq J_{\lambda,\beta}(u_{n})-\vartheta\langle J_{\lambda,\beta}'(u_{n}),u_{n}\rangle\\
&=(\frac{1}{2}-\vartheta)\int_{\mathbb{R}^N}(|\nabla u_{n}|^{2}+(\lambda V(x)-\beta)|u_{n}|^{2})dx+(\vartheta-\frac{1}{2\cdot2_{\mu}^{\ast}})
\|u_{n}\|_{NL}^{2\cdot2_{\mu}^{\ast}}\\
&=(\frac{1}{2}-\vartheta)(\|u_{n}^{+}\|_{L_{\lambda,\beta}}^{2}-\|u_{n}^{-}\|_{L_{\lambda,\beta}}^{2})+(\vartheta-\frac{1}{2\cdot2_{\mu}^{\ast}})
\|u_{n}\|_{NL}^{2\cdot2_{\mu}^{\ast}},\\
\endaligned
$$
where $u_{n}=u_{n}^{+}+u_{n}^{-}\in E_{\lambda}^{+}\oplus E_{\lambda}^{-}$. It is then easy to verify that $\{u_{n}\}$ is bounded in $E_{\lambda}$ by using the fact that that $E_{\lambda}^{-}$ is finite dimensional and $\|\cdot\|_{NL}$ is a norm in $X_{NL}$. This completes the proof of Lemma \ref{IP14}.
\end{proof}

Enlarging $\lambda_{\beta}$ if necessary, we may assume that $\lambda_{\beta}\geq\beta/M_{0}$, thus
$$
\lambda M_{0}-\beta\geq0 \hspace{4.14mm}\mbox{for} \hspace{1.14mm}\mbox{all} \hspace{1.14mm}\lambda\geq\lambda_{\beta},
$$
where $M_{0}$ is given in Remark \ref{E0}.

\begin{Prop}\label{PS2} Suppose $\lambda\geq\lambda_{\beta}$ and $\{u_{n}\}$ is $(PS)_{c_{\lambda}}$ sequence of $J_{\lambda,\beta}$ with  $$c_{\lambda}<\frac{N+2-\mu}{4N-2\mu}S_{H,L}^{\frac{2N-\mu}{N+2-\mu}}.$$
Then there exists a subsequence of $\{u_{n}\}$ which
converge strongly in $E_{\lambda}$ a solution $u_{\lambda}$ of \eqref{CCE} such that $J_{\lambda,\beta}(u_{\lambda})=c_{\lambda}$.
\end{Prop}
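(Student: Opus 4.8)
The plan is to run the concentration–compactness scheme of Proposition \ref{PS}, adapted to the indefinite splitting $E_{\lambda}=E_{\lambda}^{+}\oplus E_{\lambda}^{-}$, where the decisive structural feature is that $E_{\lambda}^{-}$ is finite dimensional by Lemma \ref{IP2} (which also rules out $E_\lambda^0$). First I would invoke Lemma \ref{IP14} to get that $\{u_{n}\}$ is bounded in $E_{\lambda}$, hence bounded in $H^{1}(\mathbb{R}^N)$ via the continuous embedding $E_{\lambda}\hookrightarrow H^{1}(\mathbb{R}^N)$. Passing to a subsequence, I extract $u_{n}\rightharpoonup u_{\lambda}$ in $E_{\lambda}$ with $u_{n}\to u_{\lambda}$ a.e.\ and in $L^{2}_{loc}(\mathbb{R}^N)$. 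Writing $u_{n}=u_{n}^{+}+u_{n}^{-}$, the finite dimensionality of $E_{\lambda}^{-}$ forces $u_{n}^{-}\to u_{\lambda}^{-}$ strongly, so only the positive and the nonlocal pieces can lose compactness.

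Next I would show the weak limit is a critical point. Exactly as in Proposition \ref{PS}, boundedness of $|u_{n}|^{2_{\mu}^{\ast}}$ in $L^{2N/(2N-\mu)}$, weak continuity of the Riesz potential, and $|u_{n}|^{2_{\mu}^{\ast}-2}u_{n}\rightharpoonup|u_{\lambda}|^{2_{\mu}^{\ast}-2}u_{\lambda}$ let me pass to the limit in $\langle J_{\lambda,\beta}'(u_{n}),\varphi\rangle\to0$ to conclude $J_{\lambda,\beta}'(u_{\lambda})=0$, so $u_\lambda$ solves \eqref{CCE}. Testing with $u_{\lambda}$ and using $\langle J_{\lambda,\beta}'(u_{\lambda}),u_{\lambda}\rangle=0$ yields
$$
J_{\lambda,\beta}(u_{\lambda})=\frac{N+2-\mu}{4N-2\mu}\,\|u_{\lambda}\|_{NL}^{2\cdot2_{\mu}^{\ast}}\geq0,
$$
the Nehari identity supplying nonnegativity even though the quadratic form is indefinite.

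Set $v_{n}:=u_{n}-u_{\lambda}$, so $v_{n}\rightharpoonup0$ and $v_{n}^{-}\to0$ strongly. Using the Hilbert structure on $E_{\lambda}^{+}$ (the cross term $(u_{\lambda}^{+},v_{n}^{+})\to0$) together with the Br\'{e}zis--Lieb splitting for the nonlocal term from \cite{GY}, I obtain
$$
J_{\lambda,\beta}(u_{n})=J_{\lambda,\beta}(u_{\lambda})+\tfrac{1}{2}\|v_{n}^{+}\|_{L_{\lambda,\beta}}^{2}-\tfrac{1}{2\cdot2_{\mu}^{\ast}}\|v_{n}\|_{NL}^{2\cdot2_{\mu}^{\ast}}+o_{n}(1),
$$
while $\langle J_{\lambda,\beta}'(u_{n}),u_{n}\rangle\to0$ and $\langle J_{\lambda,\beta}'(u_{\lambda}),u_{\lambda}\rangle=0$ give $\|v_{n}^{+}\|_{L_{\lambda,\beta}}^{2}-\|v_{n}\|_{NL}^{2\cdot2_{\mu}^{\ast}}\to0$; call the common limit $\ell$. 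To bound the gradient I reproduce the estimates of Proposition \ref{PS}: since $\lambda M_{0}-\beta\geq0$ the contribution on $F^{c}$ is nonnegative, while on $F=\{V\leq M_{0}\}$ the bound $\lambda V-\beta\geq-\beta$ costs only $\beta\int_{F}|v_{n}|^{2}$, which vanishes because $(V_3)$ makes $F$ bounded and $v_{n}\to0$ in $L^{2}_{loc}(\mathbb{R}^N)$. This yields $\int_{\mathbb{R}^N}|\nabla v_{n}|^{2}\,dx\leq\|v_{n}^{+}\|_{L_{\lambda,\beta}}^{2}+o_{n}(1)$, and inserting it into the definition of $S_{H,L}$ gives $S_{H,L}\,\ell^{(N-2)/(2N-\mu)}\leq\ell$, so either $\ell=0$ or $\ell\geq S_{H,L}^{(2N-\mu)/(N-\mu+2)}$.

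Finally, the energy splitting reads $c_{\lambda}=J_{\lambda,\beta}(u_{\lambda})+\frac{N+2-\mu}{4N-2\mu}\ell$. If $\ell>0$, then $J_{\lambda,\beta}(u_{\lambda})\geq0$ forces $c_{\lambda}\geq\frac{N+2-\mu}{4N-2\mu}S_{H,L}^{(2N-\mu)/(N-\mu+2)}$, contradicting the hypothesis $c_{\lambda}<\frac{N+2-\mu}{4N-2\mu}S_{H,L}^{(2N-\mu)/(N+2-\mu)}$. Hence $\ell=0$, so $\|v_{n}^{+}\|_{L_{\lambda,\beta}}\to0$; combined with $v_{n}^{-}\to0$ this is precisely $u_{n}\to u_{\lambda}$ strongly in $E_{\lambda}$, whence $J_{\lambda,\beta}(u_{\lambda})=c_{\lambda}$ by continuity, and $u_{\lambda}\neq0$ because $c_{\lambda}\geq\alpha>0$ by Corollary \ref{IP3}. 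The main obstacle I anticipate is the bookkeeping of the indefinite splitting, namely justifying that the negative and cross terms drop out cleanly so that the energy and the derivative split into exactly the definite-case form; once this is done the level-threshold argument proceeds as before. The genuinely new ingredient relative to Proposition \ref{PS} is the use of $(V_3)$ to make $F$ bounded, so that local Rellich compactness annihilates $\int_{F}|v_{n}|^{2}$.
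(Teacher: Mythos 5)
Your proposal is correct and follows exactly the route the paper intends: the paper's own proof consists of invoking Lemma \ref{IP14} for boundedness and then stating that the argument is ``similar to the proof of Proposition \ref{PS}'', which is precisely the adaptation you carry out, including the two points the paper leaves implicit (the strong convergence of $u_n^-$ from the finite dimensionality of $E_\lambda^-$, and the vanishing of $\int_F|v_n|^2$, which you obtain from $(V_3)$ and local Rellich compactness rather than from Lemma \ref{EMB}, an equally valid route since $(V_3)$ implies $(V_2)$). Your bookkeeping of the indefinite splitting and the level-threshold argument are both sound.
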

\begin{proof}
By Lemma \ref{IP14}, we know that $\{u_{n}\}$ is bounded in $E_{\lambda}$. Similar to the proof of Proposition \ref{PS}, we can obtain that there exists a subsequence of $\{u_{n}\}$ which
converge strongly in $E_{\lambda}$ a solution $u_{\lambda}$ of \eqref{CCE} such that $J_{\lambda,\beta}(u_{\lambda})=c_{\lambda}$.
\end{proof}

\begin{lem}\label{IP15}
For $\lambda>\lambda_{\beta}$, we have $$c_{\lambda}<\frac{N+2-\mu}{4N-2\mu}S_{H,L}^{\frac{2N-\mu}{N+2-\mu}}.$$
\end{lem}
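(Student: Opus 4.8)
The plan is to bound $c_\lambda$ from above by transferring a near-optimal linking configuration from the limiting problem \eqref{LCCE} on $\Omega$ into $E_\lambda$, and then invoking the strict Brez\'is--Nirenberg type gap already available for the $\Omega$-problem. By Lemma \ref{IP5} we have the min-max identity
$$
c_\lambda=c^{\star}=\inf_{w\in E_\lambda^{+}\setminus\{0\}}\ \max_{u\in H_w}J_{\lambda,\beta}(u),
$$
so it suffices to produce a single $w_\lambda\in E_\lambda^{+}\setminus\{0\}$ with $\max_{u\in H_{w_\lambda}}J_{\lambda,\beta}(u)<c_{\ast}$, where $c_{\ast}:=\frac{N+2-\mu}{4N-2\mu}S_{H,L}^{\frac{2N-\mu}{N+2-\mu}}$. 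Recall that for $u\in H_0^1(\Omega)\subset E_\lambda$ one has $J_{\lambda,\beta}(u)=J_{\beta,\Omega}(u)$, since $V\equiv0$ on $\Omega\supset\mbox{supp}\,u$, and that $L_{0,\beta}=-\Delta-\beta$ splits $H_0^1(\Omega)=\cp^{-}\oplus\cp^{+}$ into its negative and positive eigenspaces associated with $\zeta_j=\beta_j-\beta$.

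The abstract framework of Lemma \ref{IP5}, applied to $J_{\beta,\Omega}$ with this finite-dimensional negative part, gives the analogous identity $c(\beta,\Omega)=\inf_{w\in\cp^{+}\setminus\{0\}}\max_{u\in\cp^{-}\oplus\R^{+}w}J_{\beta,\Omega}(u)$, and the strict bound $c(\beta,\Omega)<c_{\ast}$ follows from the truncated-bubble estimates \eqref{E56}--\eqref{E58} exactly as in Lemma \ref{MPE3} and Proposition \ref{MPE4}: placing $u_\varepsilon$ (with $B_\delta\subset\Omega$) in the positive direction and maximizing over $\cp^{-}\oplus\R^{+}u_\varepsilon$, the term $-\beta\int|u_\varepsilon|^{2}$, of order $\varepsilon^{2}|\ln\varepsilon|$ when $N=4$ and $\varepsilon^{2}$ when $N\geq5$, dominates the error $O(\varepsilon^{N-2})$ and pushes the level strictly below $c_{\ast}$. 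Fix $\eta>0$ and $\bar w\in\cp^{+}$ with $\max_{u\in\cp^{-}\oplus\R^{+}\bar w}J_{\beta,\Omega}(u)<c_{\ast}-2\eta$.

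To pass to $E_\lambda$, let $w_\lambda$ be the $E_\lambda^{+}$-component of $\bar w$. By Lemma \ref{IP1} the eigenvalues $\zeta_j^{\lambda}\to\zeta_j$ and the eigenspaces $\F_j^{\lambda}\to\F_j$ as $\lambda\to\infty$; hence for $\lambda$ large $\dim E_\lambda^{-}=\dim\cp^{-}$, the $E_\lambda^{-}$-component of $\bar w$ tends to $0$ so that $w_\lambda\to\bar w$ in $H^1(\R^N)$, and there is a linear isomorphism $T_\lambda\colon\cp^{-}\to E_\lambda^{-}$ converging to the inclusion. Comparing $H_{w_\lambda}=E_\lambda^{-}\oplus\R^{+}w_\lambda$ with $\cp^{-}\oplus\R^{+}\bar w$ through $T_\lambda$, using continuity of $J_{\lambda,\beta}$ and uniform boundedness of the relevant maximizers, I expect $\max_{u\in H_{w_\lambda}}J_{\lambda,\beta}(u)\leq\max_{u\in\cp^{-}\oplus\R^{+}\bar w}J_{\beta,\Omega}(u)+o_\lambda(1)<c_{\ast}-\eta$ for $\lambda$ large. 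Enlarging $\lambda_\beta$ if necessary then gives $c_\lambda<c_{\ast}$ for all $\lambda>\lambda_\beta$.

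The hard part is precisely this last comparison. Elements of the genuine negative space $E_\lambda^{-}$ are \emph{not} supported in $\Omega$: they carry tails in $\{V>0\}$, so on $H_{w_\lambda}$ the functional $J_{\lambda,\beta}$ is not literally $J_{\beta,\Omega}$. The main obstacle is therefore to show that as $\lambda\to\infty$ the penalization by the $\lambda V$ term, controlled through $(V_3)$ and the embedding of Lemma \ref{EMB}, forces the maximizing configurations over $H_{w_\lambda}$ to concentrate on $\Omega$, so that the cross terms $\langle L_{\lambda,\beta}w_\lambda,v\rangle$ and the nonlocal interactions between $w_\lambda$ and these tails are $o_\lambda(1)$. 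This quantitative tail control is exactly what is needed to upgrade the qualitative spectral convergence of Lemma \ref{IP1} to the level estimate, and it is where the restriction to $\lambda$ large (i.e.\ enlarging $\lambda_\beta$) is essential.
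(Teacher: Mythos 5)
There is a genuine gap, and it sits exactly where you flag it yourself: the comparison $\max_{u\in H_{w_\lambda}}J_{\lambda,\beta}(u)\leq\max_{u\in\cp^{-}\oplus\R^{+}\bar w}J_{\beta,\Omega}(u)+o_\lambda(1)$ is asserted ("I expect\dots") but never proved. The qualitative spectral convergence of Lemma \ref{IP1} does not by itself control the cross terms between $w_\lambda$, the tails of $E_\lambda^{-}$ in $\{V>0\}$, and the nonlocal interaction, and maximizers over $H_{w_\lambda}$ need not stay in a fixed bounded set uniformly in $\lambda$ without further argument. As written, the central inequality of your proof is a conjecture, so the proof is incomplete.

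More importantly, the whole transfer machinery is unnecessary, and you are working in the wrong direction. The level $c_\lambda$ is defined in \eqref{f1} directly as $\inf_{{\cal N}_\lambda}J_{\lambda,\beta}$ over the full Nehari set, not only through the min-max $c^{\star}$. Since any $u\in H_0^1(\Omega)$ lies in $E_\lambda$ and satisfies $\int_{\R^N}Vu^2\,dx=0$, one has $J_{\lambda,\beta}(u)=J_{\beta,\Omega}(u)$ and $\langle J_{\lambda,\beta}'(u),u\rangle=\langle J_{\beta,\Omega}'(u),u\rangle$; hence ${\cal N}_{\beta,\Omega}\subset{\cal N}_\lambda$ and immediately $c_\lambda\leq c(\beta,\Omega)$. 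The strict bound $c(\beta,\Omega)<\frac{N+2-\mu}{4N-2\mu}S_{H,L}^{\frac{2N-\mu}{N+2-\mu}}$ is then supplied by Proposition \ref{MPE4} (via Lemma \ref{MPE3}, which is stated for all $\beta>0$ with $\beta\neq\beta_j$), and the lemma follows. This is the paper's argument: test functions supported in $\Omega$ are pushed \emph{into} ${\cal N}_\lambda$, where no tail control is needed, rather than trying to pull the configurations of $E_\lambda^{\pm}$ back to $\Omega$, which is precisely the step you could not close.
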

\begin{proof}
By the definition of $c_{\lambda}$ we know that $c_{\lambda}\leq c(\beta,\Omega)$, where $c(\beta,\Omega)$ is defined as in
\eqref{f2}. By Proposition \ref{MPE4}, we know that
$$c_{\lambda}<\frac{N+2-\mu}{4N-2\mu}S_{H,L}^{\frac{2N-\mu}{N+2-\mu}}$$
and we complete the proof.
\end{proof}

\begin{Prop}\label{IP16}
For $\lambda>\lambda_{\beta}$, there is a ground state solution $u_{\lambda}$ of \eqref{CCE} which achieves $c_{\lambda}$.
\end{Prop}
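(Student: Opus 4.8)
The plan is to reduce the whole matter to an application of Proposition \ref{PS2}. Once a $(PS)_{c_{\lambda}}$ sequence for $J_{\lambda,\beta}$ is available, the strict subcriticality $c_{\lambda}<\frac{N+2-\mu}{4N-2\mu}S_{H,L}^{\frac{2N-\mu}{N+2-\mu}}$ furnished by Lemma \ref{IP15} lets Proposition \ref{PS2} extract a subsequence converging strongly in $E_{\lambda}$ to a solution $u_{\lambda}$ of \eqref{CCE} with $J_{\lambda,\beta}(u_{\lambda})=c_{\lambda}$. Since $c_{\lambda}\geq c^{\star}\geq\alpha>0$ by Corollary \ref{IP3}, the limit is nontrivial, hence $u_{\lambda}\in\mathcal{N}_{\lambda}$, and the identity $c_{\lambda}=\inf_{\mathcal{N}_{\lambda}}J_{\lambda,\beta}$ then forces $u_{\lambda}$ to be a ground state. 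Thus the only real task is to produce a $(PS)_{c_{\lambda}}$ sequence, which is delicate because $J_{\lambda,\beta}$ is strongly indefinite on $E_{\lambda}=E_{\lambda}^{+}\oplus E_{\lambda}^{-}$ and no direct mountain-pass scheme applies to it.

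To get around this I would pass to the reduced functional $\Upsilon(u)=J_{\lambda,\beta}(u+h(u))$ on $E_{\lambda}^{+}$ introduced before Lemma \ref{Eh}. First I would verify that $\Upsilon$ has mountain-pass geometry on $E_{\lambda}^{+}$: from Lemma \ref{IP33}(i) together with $\Upsilon(u)=\max_{v\in E_{\lambda}^{-}}J_{\lambda,\beta}(u+v)\geq J_{\lambda,\beta}(u)$ one obtains $\Upsilon|_{S_{r}^{+}}\geq\alpha$, while $\Upsilon(0)=0$ follows from $h(0)=0$ in Lemma \ref{Eh}(ii); moreover, for any fixed $w\in E_{\lambda}^{+}\setminus\{0\}$ the point $tw+h(tw)$ eventually leaves $B_{R_{w}}$, so Lemma \ref{IP33}(ii) gives $\Upsilon(tw)<0$ for $t$ large. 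Since $h\in\mathcal{C}^{1}(E_{\lambda}^{+},E_{\lambda}^{-})$, the functional $\Upsilon$ is of class $\mathcal{C}^{1}$, and the mountain-pass theorem in its ray form produces a $(PS)$ sequence $\{w_{n}\}\subset E_{\lambda}^{+}$ for $\Upsilon$ at the minimax level, which via $\max_{u\in H_{w}}J_{\lambda,\beta}=\max_{t\geq0}\Upsilon(tw)$ equals $c^{\star}=c_{\lambda}$ by Lemma \ref{IP5}.

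Next I would transfer this sequence back to $J_{\lambda,\beta}$. Because $h(u)$ is the unique critical point of $v\mapsto J_{\lambda,\beta}(u+v)$ on $E_{\lambda}^{-}$ (see \eqref{f4}), the chain rule gives $\langle\Upsilon'(u),\phi\rangle=\langle J_{\lambda,\beta}'(u+h(u)),\phi\rangle$ for every $\phi\in E_{\lambda}^{+}$, whereas $J_{\lambda,\beta}'(u+h(u))$ annihilates $E_{\lambda}^{-}$. Setting $u_{n}=w_{n}+h(w_{n})$ therefore yields $J_{\lambda,\beta}(u_{n})=\Upsilon(w_{n})\to c_{\lambda}$ and $J_{\lambda,\beta}'(u_{n})\to0$ in the dual of $E_{\lambda}$, that is, a genuine $(PS)_{c_{\lambda}}$ sequence in $E_{\lambda}$; its boundedness is already recorded in Lemma \ref{IP14}. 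Invoking Lemma \ref{IP15} and Proposition \ref{PS2} then closes the argument exactly as outlined in the first paragraph.

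The main obstacle is precisely this passage through the indefinite geometry: handling the concave directions $E_{\lambda}^{-}$ through the reduction map $h$ and checking that $(PS)$ sequences for $\Upsilon$ lift, without loss of level, to $(PS)$ sequences for $J_{\lambda,\beta}$. Compactness itself is not a further difficulty here, since it was isolated in Proposition \ref{PS2}, and the energy bound $c_{\lambda}\leq c(\beta,\Omega)<\frac{N+2-\mu}{4N-2\mu}S_{H,L}^{\frac{2N-\mu}{N+2-\mu}}$ coming from Proposition \ref{MPE4} keeps the minimax level strictly below the threshold at which the Hardy--Littlewood--Sobolev bubbling would break compactness.
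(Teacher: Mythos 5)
Your argument is correct and follows essentially the same route as the paper: both reduce to the functional $\Upsilon$ on $E_{\lambda}^{+}$ via the map $h$, produce a $(PS)_{c_{\lambda}}$ sequence there, lift it to $J_{\lambda,\beta}$ by $u_{n}=w_{n}+h(w_{n})$, and conclude with Lemma \ref{IP15} and Proposition \ref{PS2}. The only (immaterial) difference is how the $(PS)$ sequence is generated — you invoke the mountain-pass theorem on $E_{\lambda}^{+}$ at the ray-minimax level $c^{\star}$, while the paper applies Ekeland's variational principle to a minimizing sequence on the Nehari set ${\cal N}$ at level $c^{\star\star}$; Lemma \ref{IP5} makes these levels coincide, so the two devices are interchangeable here.
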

\begin{proof}
Let $\{w_{n}\}\subset{\cal N}$ be a minimization sequence: $\Upsilon(w_{n})\rightarrow c^{\star\star}$. By the Ekeland variational principle we can assume that $\{w_{n}\}$ is, in addition, a $(PS)_{c^{\star\star}}$ sequence for $\Upsilon$ on ${\cal N}$. A standard argument shows that $\{w_{n}\}$ is in fact a $(PS)_{c^{\star\star}}$ sequence for $\Upsilon$ on $E_{\lambda}^{+}$ (see, e.g., \cite{Wi}). Then $\{u_n=w_n+h(w_n)\}$ is
a $(PS)_{c_{\lambda}}$ sequence for $J_{\lambda,\beta}$ on $E_{\lambda}$. By Proposition \ref{PS2} and Lemma \ref{IP15}, we have that there is a ground state solution $u_{\lambda}$ of \eqref{CCE} which achieves $c_{\lambda}$.
\end{proof}

In the following, we come to give the asymptotic behavior of the
ground state solutions of \eqref{CCE} as $\lambda$ goes to infinity.

\begin{Prop}\label{IP17}
$\lim_{\lambda\rightarrow+\infty}c_{\lambda}=c(\beta,\Omega)$ and for any sequence $\{\lambda_{n}\}(\lambda_{n}\rightarrow+\infty)$, up to a
subsequence $u_{\lambda_{n}}\rightarrow u$ strongly in $H^{1}(\mathbb{R}^{N})$. Here $u$ is a ground state solution of \eqref{LCCE} which achieves $c(\beta,\Omega)$.
\end{Prop}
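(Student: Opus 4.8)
The plan is to follow the strategy of the asymptotic analysis carried out in the proof of Theorem \ref{EXS}, now performed for the indefinite functional, and to couple it with a $\lambda$-uniform nonvanishing estimate. Fix a sequence $\lambda_{n}\to\infty$ and, for each $n$, let $u_{\lambda_{n}}$ be the ground state from Proposition \ref{IP16}, so that $J_{\lambda_{n},\beta}(u_{\lambda_{n}})=c_{\lambda_{n}}$ and $J_{\lambda_{n},\beta}'(u_{\lambda_{n}})=0$. Testing with $u_{\lambda_{n}}$ gives $\langle L_{\lambda_{n},\beta}u_{\lambda_{n}},u_{\lambda_{n}}\rangle=\|u_{\lambda_{n}}\|_{NL}^{2\cdot2_{\mu}^{\ast}}$ and hence $c_{\lambda_{n}}=\frac{N+2-\mu}{4N-2\mu}\|u_{\lambda_{n}}\|_{NL}^{2\cdot2_{\mu}^{\ast}}$. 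By Lemma \ref{IP15} we already know $c_{\lambda_{n}}\le c(\beta,\Omega)<\frac{N+2-\mu}{4N-2\mu}S_{H,L}^{\frac{2N-\mu}{N+2-\mu}}$, so $\limsup_{n}c_{\lambda_{n}}\le c(\beta,\Omega)$ and $\|u_{\lambda_{n}}\|_{NL}$ is bounded.

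First I would establish a uniform bound for $\{u_{\lambda_{n}}\}$ in $E$. Writing $u_{\lambda_{n}}=u_{\lambda_{n}}^{+}+u_{\lambda_{n}}^{-}$ and testing \eqref{CCE} with $u_{\lambda_{n}}^{-}$, the Hölder-type estimate for the nonlocal term (a consequence of the Hardy--Littlewood--Sobolev inequality, see \cite{GY}) together with the embedding $E_{\lambda}\hookrightarrow H^{1}(\mathbb{R}^{N})\hookrightarrow X_{NL}$, uniform for $\lambda$ large, bounds $\|u_{\lambda_{n}}^{-}\|_{L_{\lambda_{n},\beta}}$ by a power of $\|u_{\lambda_{n}}\|_{NL}$; then $c_{\lambda_{n}}=\frac{N+2-\mu}{4N-2\mu}(\|u_{\lambda_{n}}^{+}\|_{L_{\lambda_{n},\beta}}^{2}-\|u_{\lambda_{n}}^{-}\|_{L_{\lambda_{n},\beta}}^{2})$ forces $\|u_{\lambda_{n}}^{+}\|_{L_{\lambda_{n},\beta}}$ to be bounded too, which yields a bound for $\|u_{\lambda_{n}}\|_{\lambda_{n}}$. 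Lemma \ref{EMB} then gives $u\in H_{0}^{1}(\Omega)$ with, along a subsequence, $u_{\lambda_{n}}\rightharpoonup u$ weakly and $u_{\lambda_{n}}\to u$ in $L^{2}(\mathbb{R}^{N})$. Passing to the limit in the weak formulation of \eqref{CCE} against $\varphi\in H_{0}^{1}(\Omega)$ (the term $\lambda_{n}\int V u_{\lambda_{n}}\varphi$ vanishes since $V\equiv0$ on $\Omega$) and using the weak continuity of the convolution nonlinearity exactly as in Proposition \ref{PS}, see \eqref{B12}, I obtain that $u$ solves \eqref{LCCE}; in particular $\langle L_{0,\beta}u,u\rangle=\|u\|_{NL}^{2\cdot2_{\mu}^{\ast}}$.

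Next I would upgrade this to strong convergence by adapting the splitting of Theorem \ref{EXS}. Set $v_{n}:=u_{\lambda_{n}}-u$, so $v_{n}\to0$ in $L^{2}(\mathbb{R}^{N})$. Since $V\equiv0$ on $\mathrm{supp}\,u$, one has $\langle L_{\lambda_{n},\beta}u_{\lambda_{n}},u_{\lambda_{n}}\rangle=\langle L_{0,\beta}u,u\rangle+\langle L_{\lambda_{n},\beta}v_{n},v_{n}\rangle+o_{n}(1)$, while the Br\'ezis--Lieb splitting for the nonlocal term from \cite{GY} gives $\langle L_{\lambda_{n},\beta}v_{n},v_{n}\rangle=\|v_{n}\|_{NL}^{2\cdot2_{\mu}^{\ast}}+o_{n}(1)$. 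Using $\lambda_{n}M_{0}-\beta\ge0$ on $F^{c}=\{V>M_{0}\}$, cf. \eqref{B1}, and $v_{n}\to0$ in $L^{2}$ on $F$, I get $\int_{\mathbb{R}^{N}}|\nabla v_{n}|^{2}\le\langle L_{\lambda_{n},\beta}v_{n},v_{n}\rangle+o_{n}(1)=\|v_{n}\|_{NL}^{2\cdot2_{\mu}^{\ast}}+o_{n}(1)$, so by the definition of $S_{H,L}$ the limit $b:=\lim_{n}\|v_{n}\|_{NL}^{2\cdot2_{\mu}^{\ast}}$ satisfies $b=0$ or $b\ge S_{H,L}^{\frac{2N-\mu}{N+2-\mu}}$. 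The energy ceiling $\frac{N+2-\mu}{4N-2\mu}(\|u\|_{NL}^{2\cdot2_{\mu}^{\ast}}+b)=\lim_{n}c_{\lambda_{n}}\le c(\beta,\Omega)<\frac{N+2-\mu}{4N-2\mu}S_{H,L}^{\frac{2N-\mu}{N+2-\mu}}$ excludes the second alternative, hence $b=0$, $\int|\nabla v_{n}|^{2}\to0$, and $u_{\lambda_{n}}\to u$ strongly in $H^{1}(\mathbb{R}^{N})$.

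It remains to prove $u\neq0$ and to identify the limits, and I expect this to be the main obstacle, precisely because the indefiniteness of $L_{\lambda,\beta}$ precludes a direct coercivity lower bound. The plan is to establish the $\lambda$-uniform nonvanishing estimate $c_{\lambda}\ge\alpha_{0}>0$: testing the equation with $u_{\lambda}^{\pm}$ and using the nonlocal Hölder inequality and the uniform embedding $E_{\lambda}\hookrightarrow X_{NL}$ yields $\|u_{\lambda}^{\pm}\|_{L_{\lambda,\beta}}\le C\|u_{\lambda}\|_{NL}^{2\cdot2_{\mu}^{\ast}-1}$, whence $\|u_{\lambda}\|_{NL}\le C\|u_{\lambda}\|_{L_{\lambda,\beta}}\le C'\|u_{\lambda}\|_{NL}^{2\cdot2_{\mu}^{\ast}-1}$; since $u_{\lambda}\neq0$ and $2\cdot2_{\mu}^{\ast}-1>1$, this forces $\|u_{\lambda}\|_{NL}\ge m_{0}>0$ uniformly, so $c_{\lambda}\ge\frac{N+2-\mu}{4N-2\mu}m_{0}^{2\cdot2_{\mu}^{\ast}}=:\alpha_{0}>0$. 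If $u=0$ then $u_{\lambda_{n}}\to0$ in $H^{1}$ would give $c_{\lambda_{n}}\to0$, contradicting $c_{\lambda_{n}}\ge\alpha_{0}$; thus $u\neq0$, so $u\in\mathcal{N}_{\beta,\Omega}$ and $J_{\beta,\Omega}(u)\ge c(\beta,\Omega)$. By strong convergence $\lim_{n}c_{\lambda_{n}}=\frac{N+2-\mu}{4N-2\mu}\|u\|_{NL}^{2\cdot2_{\mu}^{\ast}}=J_{\beta,\Omega}(u)$, and combined with $\limsup_{n}c_{\lambda_{n}}\le c(\beta,\Omega)$ this yields $J_{\beta,\Omega}(u)=c(\beta,\Omega)=\lim_{n}c_{\lambda_{n}}$, i.e. $u$ is a ground state of \eqref{LCCE}, which is attained by Remark \ref{LEX}. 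Finally, since every subsequence of $\{c_{\lambda_{n}}\}$ admits a further subsequence converging to the same value $c(\beta,\Omega)$, the full limit $\lim_{\lambda\to\infty}c_{\lambda}=c(\beta,\Omega)$ follows.
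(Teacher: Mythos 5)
Your proposal is correct and follows essentially the same route as the paper: uniform boundedness of the ground states in $E_{\lambda_n}$, identification of the weak limit $u\in H_{0}^{1}(\Omega)$ as a solution of \eqref{LCCE}, the Br\'ezis--Lieb splitting together with the energy ceiling $c_{\lambda}\leq c(\beta,\Omega)<\frac{N+2-\mu}{4N-2\mu}S_{H,L}^{\frac{2N-\mu}{N+2-\mu}}$ to exclude the concentration alternative $b\geq S_{H,L}^{\frac{2N-\mu}{N-\mu+2}}$, and finally the squeeze $c(\beta,\Omega)\leq J_{\beta,\Omega}(u)=\lim_{n}c_{\lambda_{n}}\leq c(\beta,\Omega)$. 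The only substantive additions are cosmetic or clarifying: you obtain $u\in H_{0}^{1}(\Omega)$ from Lemma \ref{EMB} rather than from the direct $(V_3)$-based contradiction on a compact subset of $\Omega^{c}$, you replace the paper's monotonicity of $\lambda\mapsto c_{\lambda}$ by a subsequence argument, and you explicitly prove the uniform lower bound $c_{\lambda}\geq\alpha_{0}>0$ guaranteeing $u\neq0$ --- a point the paper leaves implicit when it writes $J_{\beta,\Omega}(u)\geq c(\beta,\Omega)$.
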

\begin{proof}
For $u\in H_{0}^{1}(\Omega)$, we have ${\cal N}_{\beta,\Omega}\subset{\cal N}_{\lambda}$. Thus by the definition of $c_{\lambda}$ and $c(\beta,\Omega)$, it is easy to see that $c_{\lambda}\leq c(\beta,\Omega)$ for $\lambda\geq \lambda_{\beta}$. On
the other hand, it is not difficult to check that $c_{\lambda}$ is nondecreasing as $\lambda$ growth. Thus we may assume that $\lim_{\lambda\rightarrow+\infty}c_{\lambda}=\kappa\leq c(\beta,\Omega)$ which implies for any sequence $\{\lambda_{n}\}(\lambda_{n}\rightarrow+\infty)$, $c_{\lambda_{n}}\rightarrow\kappa\leq c(\beta,\Omega)$. We assume that $u_{n}$ is such that $c_{\lambda_{n}}$ is achieved,
by Lemma \ref{IP14}, $\{u_{n}\}$ is bounded in $E_{\lambda_{n}}$ and thus is also bounded in $H^{1}(\mathbb{R}^{N})$. As a result, we have
$$
u_{n}\rightharpoonup u\ \ \mbox{weakly in}\ \ H^{1}(\mathbb{R}^{N}),
$$
$$
u_{n}\rightarrow u\ \ \mbox{strongly in}\ \ L_{loc}^{q}(\mathbb{R}^{N})\ \ \mbox{for}\ \ 2\leq q<2^{\ast}
$$
and
$$
u_{n}\rightarrow u\ \ \mbox{a.e. in}\ \ \mathbb{R}^{N}.
$$
We claim that $u|_{\Omega^{c}}=0$, where $\Omega^{c}:=\{x|x\in\mathbb{R}^{N}\backslash\Omega\}$. Indeed, if not, there exists a compact subset $D_{1}\subset\Omega^{c}$ with $dist\{D_{1},\partial\Omega\}>0$ such that $u|_{D_{1}}\neq0$ and
$$
\int_{D_{1}}u_{n}^{2}dx\rightarrow\int_{D_{1}}u^{2}dx>0.
$$
Moreover, there exists $\epsilon_{0}>0$ such that $V(x)\geq\epsilon_{0}$ for any $x\in D_{1}$.

By the choice of $\{u_{n}\}$, we have
$$
0=\langle J_{\lambda,\beta}'(u_{n}),u_{n}\rangle
=\int_{\mathbb{R}^{N}}(|\nabla u_{n}|^{2}+(\lambda_{n} V-\beta) u_{n}^{2})dx-\int_{\mathbb{R}^N}\int_{\mathbb{R}^N}
\frac{|u_{n}(x)|^{2_{\mu}^{\ast}}|u_{n}(y)|^{2_{\mu}^{\ast}}}
{|x-y|^{\mu}}dxdy,
$$
hence for $n$ large
$$\aligned
J_{\lambda,\beta}(u_{n})&=(\frac{1}{2}-\frac{1}{2\cdot2_{\mu}^{\ast}})\int_{\mathbb{R}^{N}}|\nabla u_{n}|^{2}+(\lambda_{n} V(x)-\beta) u_{n}^{2}dx\\
&\geq(\frac{1}{2}-\frac{1}{2\cdot2_{\mu}^{\ast}})\int_{D_{1}}(\lambda_{n}\epsilon_{0} -\beta) u_{n}^{2}dx\rightarrow+\infty
\endaligned$$
as $n\rightarrow\infty$. This contradiction shows that $u|_{\Omega^{c}}=0$. By the smooth assumption on the boundary $\partial\Omega$ we indeed have $u\in H_{0}^{1}(\Omega)$.

Now we prove that
\begin{equation}\label{g1}
u_{n}\rightarrow u\ \ \mbox{strongly in}\ \ X_{NL}.
\end{equation}
We take $v_{n}:=u_{n}-u$ and suppose on the contrary that \eqref{g1} is not true, then up to a subsequence, we may assume that
$$
\lim_{n\rightarrow\infty}\int_{\mathbb{R}^N}\int_{\mathbb{R}^N}
\frac{|v_{n}(x)|^{2_{\mu}^{\ast}}|v_{n}(y)|^{2_{\mu}^{\ast}}}
{|x-y|^{\mu}}dxdy\rightarrow b>0.
$$
By a similar argument as the proof of Proposition \ref{PS}, we can show that $b\geq S_{H,L}^{\frac{2N-\mu}{N-\mu+2}}$ which implies
that $\kappa=\lim_{n\rightarrow\infty}c_{\lambda_{n}}\geq \frac{N+2-\mu}{4N-2\mu}S_{H,L}^{\frac{2N-\mu}{N+2-\mu}}$. This contradicts with $\kappa<c(\beta,\Omega)<\frac{N+2-\mu}{4N-2\mu}S_{H,L}^{\frac{2N-\mu}{N+2-\mu}}$. Namely we proved that \eqref{g1} holds.

From the fact that $u_{n}$ is the solutions of \eqref{CCE} with $\lambda$ replaced by $\lambda_{n}$, we have
$$
\int_{\mathbb{R}^{N}}(\nabla u_{n}\nabla\varphi+(\lambda_{n} V-\beta)u_{n}\varphi )dx
=\int_{\mathbb{R}^N}\int_{\mathbb{R}^N}
\frac{|u_{n}(x)|^{2_{\mu}^{\ast}}|u_{n}(y)|^{2_{\mu}^{\ast}-2}u_{n}(y)\varphi(y)}
{|x-y|^{\mu}}dxdy
$$
for any $\varphi\in E$. If $\varphi\in H_{0}^{1}(\Omega)$ then $\lambda_{n}\displaystyle\int_{\mathbb{R}^N} Vu_{n}\varphi dx=0$ for all $n$. Letting $n\rightarrow\infty$ we obtain
$$
\int_{\mathbb{R}^{N}}(\nabla u\nabla\varphi
-\beta u\varphi) dx=\int_{\mathbb{R}^N}\int_{\mathbb{R}^N}
\frac{|u(x)|^{2_{\mu}^{\ast}}|u(y)|^{2_{\mu}^{\ast}-2}u(y)\varphi(y)}
{|x-y|^{\mu}}dxdy
$$
for any $\varphi\in H_{0}^{1}(\Omega)$. So, $u$ is a solution of \eqref{LCCE}.
Since $V(x)=0$ for $x\in\Omega$, we get
\begin{equation}\label{g2}
\langle L_{\lambda_{n},\beta}u_{n},u_{n}\rangle=\langle L_{0,\beta} u,u\rangle+\langle L_{\lambda_{n},\beta}v_{n},v_{n}\rangle,
\end{equation}
where $v_{n}=u_{n}-u$. Since $\{u_{n}\}$ is a sequence of solutions of \eqref{CCE} and $u$ is a solution of \eqref{LCCE}, by \eqref{g1} we can get
\begin{equation}\label{g3}
\langle L_{\lambda_{n},\beta}v_{n},v_{n}\rangle=o_{n}(1).
\end{equation}
Thus, from \eqref{g2} we get
$$
\langle L_{\lambda_{n},\beta}u_{n},u_{n}\rangle\rightarrow\langle L_{0,\beta} u,u\rangle
$$
as $n\rightarrow\infty$, that is
$$
\int_{\mathbb{R}^{N}}(|\nabla u_{n}|^{2}+(\lambda_{n} V(x)-\beta) u_{n}^{2})dx\rightarrow
\int_{\mathbb{R}^{N}}(|\nabla u|^{2}-\beta u^{2})dx
$$
as $n\rightarrow\infty$. By Lemma \ref{EMB}, we know $u_{n}\rightarrow u$ in $L^{2}(\mathbb{R}^N)$ and so $$
\int_{\mathbb{R}^{N}}(|\nabla u_{n}|^{2}+\lambda_{n} V(x) u_{n}^{2})dx\rightarrow
\int_{\mathbb{R}^{N}}|\nabla u|^{2}dx
$$
as $n\rightarrow\infty$. It follows from
$$
\int_{\mathbb{R}^N}|\nabla u|^{2}dx\leq\lim_{n\rightarrow\infty}\int_{\mathbb{R}^N}|\nabla u_{n}|^{2}dx
$$
that
$$
\int_{\mathbb{R}^{N}}|\nabla u_{n}|^{2}dx\rightarrow
\int_{\mathbb{R}^{N}}|\nabla u|^{2}dx
$$
as $n\rightarrow\infty$. Combining this with the fact that $u_{n}\rightarrow u$ in $L^{2}(\mathbb{R}^N)$, we have
$$
u_{n}\rightarrow u\ \ \mbox{strongly in}\ \ H^{1}(\mathbb{R}^{N}).
$$
By the
definition of $c(\beta,\Omega)$ we have $J_{\beta,\Omega}(u)\geq c(\beta,\Omega)$. On the other hand, by the strong convergence of
$u_n$, we have
$$\aligned
J_{\beta,\Omega}(u)&=(\frac{1}{2}-\frac{1}{2\cdot2_{\mu}^{\ast}})\int_{\Omega}
\int_{\Omega}\frac{|u(x)|^{2_{\mu}^{\ast}}|u(y)|^{2_{\mu}^{\ast}}}
{|x-y|^{\mu}}dxdy\\
&=(\frac{1}{2}-\frac{1}{2\cdot2_{\mu}^{\ast}})\lim_{n\rightarrow\infty}\int_{\mathbb{R}^N}
\int_{\mathbb{R}^N}\frac{|u_{n}(x)|^{2_{\mu}^{\ast}}|u_{n}(y)|^{2_{\mu}^{\ast}}}
{|x-y|^{\mu}}dxdy\\
&=\lim_{n\rightarrow\infty}J_{\lambda,\beta}(u_{n})\\
&=\lim_{n\rightarrow\infty}c_{\lambda_{n}}=\kappa\leq c(\beta,\Omega).
\endaligned$$
Thus we proved that $J_{\beta,\Omega}(u) =\kappa= c(\beta,\Omega)$. Namely $u$ is indeed a ground state solution of \eqref{LCCE} which achieves $c(\beta,\Omega)$ and thus the proof of Proposition \ref{IP17} is completed.
\end{proof}

\noindent
{\bf Proof of Theorem \ref{EXS4}.} This is a direct results of Proposition \ref{IP16} and Proposition \ref{IP17}.
$\hfill{} \Box$

\end{document}